\newcommand{\R}{{\mathbb R}} 
\newcommand{\p}{\varphi}
\renewcommand{\(}{\left(}
\renewcommand{\)}{\right)}
\newtheorem{theorem}{Theorem}[section]
\newtheorem{claim}{Step} 
\newtheorem{step}{Step} 
\newtheorem{corollary}[theorem]{Corollary}
\newtheorem{lemma}[theorem]{Lemma}
\newtheorem{proposition}[theorem]{Proposition}
\newtheorem{definition}[theorem]{Definition}
\newtheorem{remark}[theorem]{Remark}
\begin{document}











\title[On the existence of dichromatic single element lenses ]{On the existence of dichromatic \\single element lenses}
\author[C. E. Guti\'errez and A. Sabra]{Cristian E. Guti\'errez and Ahmad Sabra}
\thanks{\today}
\address{Department of Mathematics\\Temple University\\Philadelphia, PA 19122}
\email{gutierre@temple.edu}
\address{Faculty of Mathematics, Informatics, and Mechanics,
University of Warsaw, Poland}
\email{sabra@mimuw.edu.pl}

\begin{abstract}
Due to dispersion, light with different wavelengths, or colors, is refracted at different angles. Our purpose is to determine when is it possible to design a lens made of a single homogeneous material so that it refracts light superposition of two colors into a desired fixed final direction. Two problems are considered: one is when light emanates in a parallel beam and the other is when light emanates from a point source. For the first problem, and when the direction of the parallel beam is different from the final desired direction, we show that such a lens does not exist; otherwise we prove the solution is trivial, i.e., the lens is confined between two parallel planes. For the second problem we prove that is impossible to design such a lens when the desired final direction is not among the set of incident directions. Otherwise, solving an appropriate system of functional equations we show that a local solution exists.   
\end{abstract}


\maketitle

\tableofcontents

\section{Introduction}
\setcounter{equation}{0}
We showed in \cite{gutierrez-sabra:asphericallensdesignandimaging} that given a function $u$ in $\Omega\subset \R^2$ and a unit direction $w\in S^2$ there exists a surface parametrized by a function $f$ such that the lens sandwiched by $u$ and $f$, made of an homogeneous material and denoted by $(u,f)$, refracts monochromatic light emanating vertically from $\Omega$ into the direction $w$. In the earlier paper \cite{gutierrez:asphericallensdesign}, a similar result is proved when light emanates from a point source. 
The purpose of this paper is to study if it is possible to design simple lenses doing similar refracting jobs for non monochromatic light.
By a simple (or single element) lens we mean a domain in $\R^3$ bounded by two smooth surfaces that is filled with an homogeneous material. 

To do this we need to deal with dispersion: since the index of refraction of a material depends on the wavelength of the radiation, a non monochromatic light ray after passing through a lens splits into several rays having different refraction angles and wavelengths. 
Therefore, when white light is refracted by a single lens each color comes to a focus at a different distance from the objective. 
This is called chromatic aberration and plays a vital role in lens design,  
see \cite[Chapter 5]{kingslake:lensdesignfundamentals}.  
Materials have various degrees of dispersion, and
low dispersion ones are used in the manufacturing of photographic lenses, see \cite{Cannon}.
%
A way to correct chromatic aberration is to build lenses composed of various simple lenses made of different materials.  
Also chromatic aberration has recently being handled numerically using demosaicing algorithms, see \cite{demosaicingalgorithms}.
The way in which the refractive index depends of the wavelength is given by a formula for the dispersion of light due to A. Cauchy: the refractive index $n$ in terms of the wavelength $\lambda$ is given by
$n(\lambda)=A_1+\dfrac{A_2}{\lambda^2}+\dfrac{A_4}{\lambda^4}+\cdots $, where $A_i$ are constants depending on the material \cite{cauchy-memoire-sur-la-dispersio-de-la-luminere}. The validity of this formula is in the visible wavelength range, see \cite[pp. 99-102]{book:born-wolf} for its accurateness in various materials.
A more accurate formula was derived by Sellmeier, see \cite[Section 23.5]{jenkins-white:fundamentalsofoptics}. 

A first result related to our question is 
 that there is no single lens bounded by two spherical surfaces that refracts non monochromatic radiation from a point into a fixed direction; this was originally stated by K. Schwarzschild \cite{schwarzschild:1905-telescope}.
The question of designing a single lens, non spherical, that focuses one point into a fixed direction for light containing only two colors, i.e., for two refractive indices $n\neq \bar n$, is considered in \cite{schultz:1983achromaticsinglelens} in the plane; but no mathematically rigorous proof is given. In fact, by tracing back and forth rays of both colors, the author describes how a finite number of points should be located on the faces of the desired lens and he claims, without proof, that the desired surfaces can be completed by interpolating adjacent points with third degree polynomials. Such an interpolation will give an undesired refracting behavior outside the fixed number of points considered.
For the existence of rotationally symmetric lenses capable of focusing one point into two points for two different refractive indices see \cite{vanbrunt-ockendon:lenstwowavelengths}, \cite{vaanbrunt:refinements}. 
The results of all these authors require size conditions on $n,\bar n$.
 The monochromatic case is due to Friedman and MacLeod  \cite{friedmanmacleod:optimaldesignopticalens} and  Rogers \cite{rogers1988:picardtypethemlensdesign}.
The solutions obtained are analytic functions.
These results are all two dimensional and therefore concern only to rotationally symmetric lenses.

In view of all this, we now state precisely the problems that are considered and solved in this paper.

Problem A: is there a single lens sandwiched by a surface $L$ given by the graph of a function $u$ in $\Omega$, the lower surface of the lens, and a surface $S$, the top part of the lens, such that each ray of dichromatic light (superposition of two colors) emanating in the vertical direction $e$ from $(x,0)$ for $x\in \Omega$  is refracted by the lens into the direction $w$?
We denote such a lens by the pair $(L,S)$.
Notice that when a dichromatic ray enters the lens, due to chromatic dispersion, it splits into two rays having different directions and colors, say red and blue, that they both travel inside the lens until they exit it at points on the surface $S$ and then both continue traveling in the direction $w$; see Figure \ref{fig:Problems A and B}(a).
\begin{figure}[htp]
\begin{center}
    \subfigure[$ $]{\label{fig:Problem A}\includegraphics[height=2.9in]{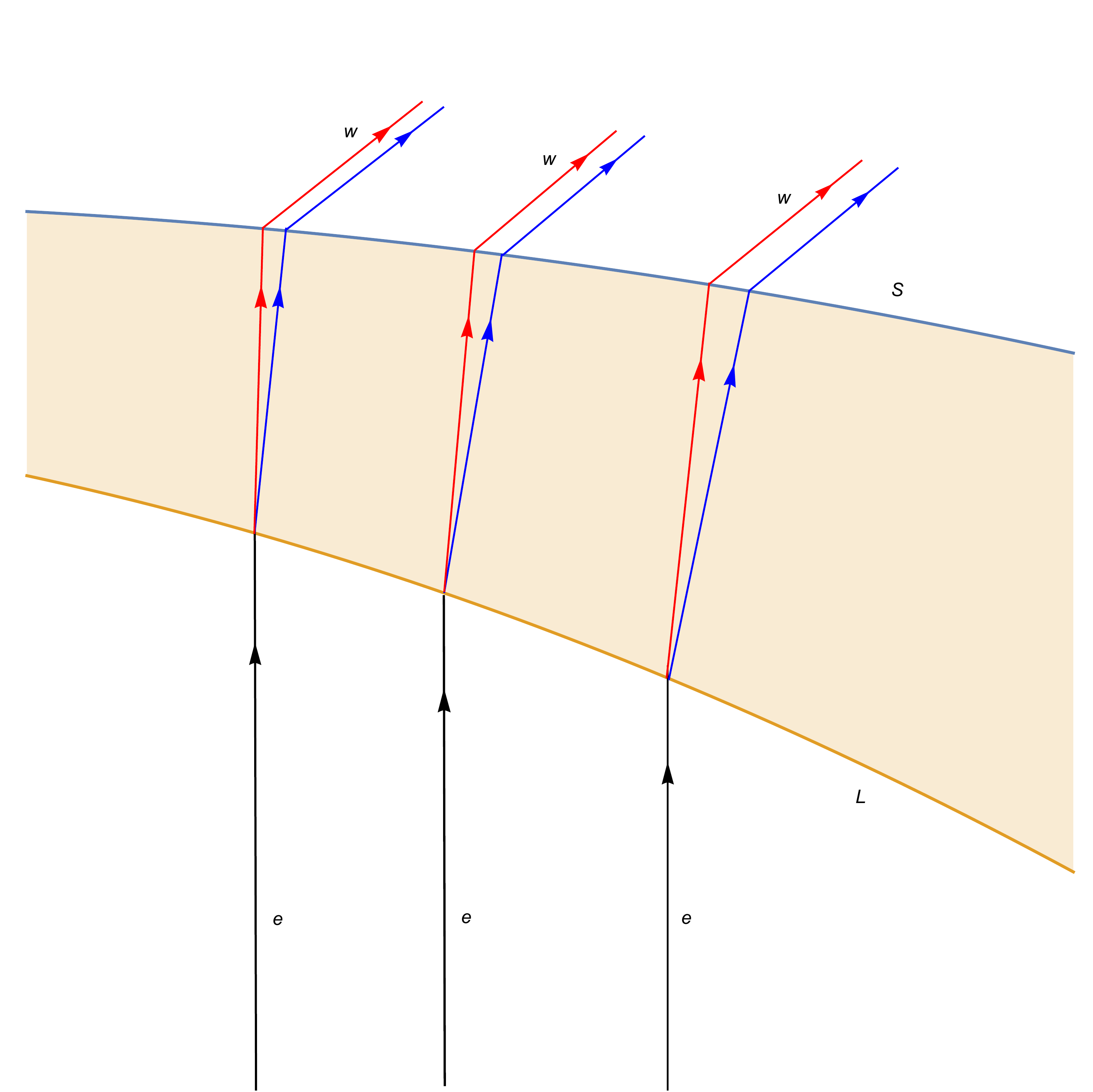}}\,
    \subfigure[$ $]{\label{fig:Problem B}\includegraphics[width=2.8in]{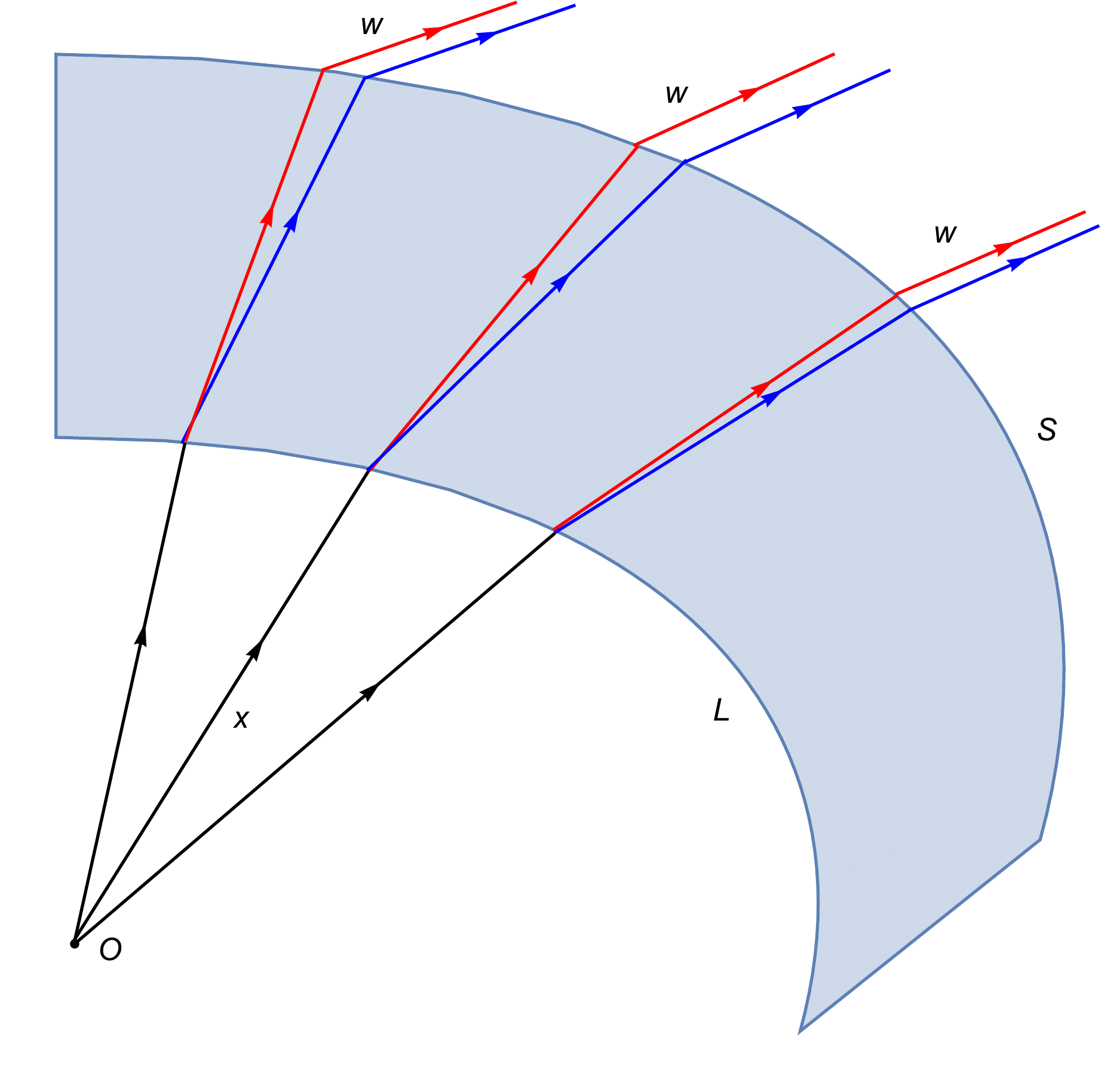}} 
\end{center}
  \caption{Problems A and B}
  \label{fig:Problems A and B}
\end{figure}


Problem B: a similar question is when the rays emanate from a point source $O$ and we ask if a a single lens $(L,S)$ exists such that all rays are refracted into a fixed given direction $w$. Now $L$ is given parametrically by $\rho(x)x$ for $x\in \Omega\subset S^2$;
see Figure \ref{fig:Problems A and B}(b).
\\

We will show in Section \ref{The collimated case} using Brouwer fixed point theorem that Problem A has no solution if $w\neq e$. In case $w=e$, the unique solution to Problem A is the trivial solution: $L$ and $S$ are contained in two horizontal planes.
This is the contents of Theorem \ref{thm:monotonicity of phi}. 
On the other hand, since Problem A is solvable for monochromatic light and for each given lower surface $L$, we obtain two single lenses, one for each color.
%
%
We then show in Section \ref{subsec:difference of the upper surface} that the difference between the upper surfaces of these two lenses can be estimated by the difference between the refractive indices for each color. 

Concerning Problem B, we prove in Theorem \ref{thm:nonexistence point source}, also using Brouwer fixed point theorem, that if $w\notin \Omega$ then Problem $B$ has no solution. 
The case when $w\in \Omega$ requires a more elaborate and long approach. 
In fact we show in Sections \ref{subsec:Problem B implies System} and \ref{subsec:converse functional implies optic}, in dimension two, that the solvability of Problem B is equivalent to solve a system of first order functional differential equations. 
For this we need an existence theorem for these type of equations that was introduced by Rogers in \cite{rogers1988:picardtypethemlensdesign}.
We provide in Section \ref{sec:FOD} a simpler proof of this existence and uniqueness result of local solutions using the Banach fixed point theorem, Theorems \ref{thm: Existence} and \ref{thm:Uniqueness}. Section \ref{sec:FOD} is self contained and has independent interest.
%
%
The existence of local solutions to Problem B in the plane is then proved in Section \ref{sec:existence of local solution} by application of Theorem \ref{thm: Existence}. For this it is necessary to assume conditions on the ratio between the thickness of the lens and its distance to the point source, Theorem \ref{thm:Existence last}. 
We also derive a necessary condition for the solvability of Problem B, see Corollary \ref{cor: Sufficient Condition}.  
To sum up our result: for $w=e\in \Omega$ and fixing two points $P$ and $Q$ on the positive $y$-axis, with $|Q|>|P|$ and letting $k=|P|/|Q-P|$, we show that if $k$ is small then there exists a unique lens $(L,S)$ local solution to Problem $B$ such that $L$ passes through the point $P$ and $S$ through $Q$; otherwise, for $k$ large no solution exists.  For intermediate values of $k$ see Remark \ref{rmk:final remark}.

The analogue of Problem B for more than two colors has no solution, i.e., if rays emitted from the origin are superposition of three or more colors, there is no simple lens refracting these rays into a unique direction $w$, see Remark \ref{rmk:three colors}.

We close this introduction mentioning that a number of results have been recently obtained for refraction of monochromatic light, these include the papers \cite{gutierrez-huang:farfieldrefractor}, \cite{gutierrez-huang:near-field-refractor},  \cite{gutierrez:cimelectures}, \cite{2016-karakhanyan:parallel-beam-refractor}, \cite{deleo-gutierrez-mawi:numerical-refractor}, and \cite{gutierrez-sabra:freeformgeneralfields}.

\section{Preliminaries}
\setcounter{equation}{0}
In this section we mention some consequences from the Snell law that will be used later.
In an homogeneous and isotropic medium, the index of refraction depends on the wavelength of light.
Suppose $\Gamma$ is a surface in $\R^3$ separating two media
I and II that are homogeneous and isotropic. 
If a
ray of monochromatic light
having unit direction $x$ and traveling
through the medium I hits $\Gamma$ at the point $P$, then this ray
is refracted in the unit direction $m$ through medium II
according with the Snell law in vector form, \cite{luneburg:maththeoryofoptics},
\begin{equation}\label{snellwithcrossproduct}
n_{1}(x\times \nu)=n_{2}(m\times \nu),
\end{equation} 
where $\nu$ is the unit normal to the surface to $\Gamma$ at $P$ going towards the medium II,
and $n_1,n_2$ are the refractive indices for the corresponding monochromatic light.
This has several consequences:
\begin{enumerate}
\item[(a)] the vectors $x,m,\nu$ are all on the same plane, called plane of incidence;
\item[(b)] the well known Snell law in scalar form
$$n_1\sin \theta_1= n_2\sin
\theta_2,$$ 
where $\theta_1$ is the angle between $x$ and $\nu$
(the angle of incidence),
$\theta_2$ the angle between $m$ and $\nu$ (the angle of refraction).
\end{enumerate}

From \eqref{snellwithcrossproduct}, 
with $\kappa=n_2/n_1$, 
\begin{equation}\label{eq:snellvectorform}
x-(n_2/n_1) \,m =\lambda \nu,
\end{equation}
with
\begin{equation}\label{formulaforlambda}
\lambda=x\cdot \nu -\sqrt{\kappa^2-1+(x\cdot \nu)^{2}}=\Phi_\kappa(x\cdot \nu).
\end{equation}
Notice that $\lambda>0$ when $\kappa<1$, and $\lambda<0$ if $\kappa>1$.
When $\kappa<1$ total reflection occurs, unless $x\cdot \nu\geq \sqrt{1-\kappa^2}$, see \cite{book:born-wolf} and \cite[Sec. 2]{gutierrez:cimelectures}.

The following lemmas will be used in the remaining sections of the paper.

\begin{lemma}\label{lm:lemma equality of three normals}
Assume we have monochromatic light.
 Let $\Gamma_1$ and $\Gamma_2$ be two surfaces enclosing a lens with refractive index $n_2$, 
and the outside of the lens is a medium with refractive index $n_1$ with $n_1\neq n_2$.

Suppose an incident ray with unit direction $x$ strikes $\Gamma_1$ at $P$, the ray propagates inside the lens and is refracted at $Q\in \Gamma_2$ into the unit direction $w$.
Then $w=x$ if and only if the unit normals $\nu_1(P)=\nu_2(Q)$.

\end{lemma}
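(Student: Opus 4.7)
The plan is to apply the vector form of Snell's law \eqref{eq:snellvectorform} at the two refraction points and combine the results. Let $m$ denote the unit direction of the ray inside the lens, and set $\kappa = n_2/n_1$, so that at $P$ the ray passes from index $n_1$ into index $n_2$ and at $Q$ from $n_2$ back into $n_1$. Snell's law gives
\[
x - \kappa\, m = \lambda_1 \nu_1(P), \qquad m - \frac{1}{\kappa}\, w = \lambda_2 \nu_2(Q),
\]
with $\lambda_1 = \Phi_\kappa(x\cdot \nu_1(P))$ and $\lambda_2 = \Phi_{1/\kappa}(m\cdot \nu_2(Q))$. Multiplying the second identity by $\kappa$ and adding eliminates $m$ and produces the master identity
\[
x - w = \lambda_1 \nu_1(P) + \kappa\, \lambda_2\, \nu_2(Q),
\]
which is the source of both implications.

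For the forward implication, assume $w=x$. The master identity yields $\lambda_1\nu_1(P) = -\kappa\lambda_2 \nu_2(Q)$. Since $\kappa\neq 1$, formula \eqref{formulaforlambda} gives $\lambda_1\neq 0$ and $\lambda_2\neq 0$, and moreover their signs are opposite: $\lambda_1>0$ and $\lambda_2<0$ when $\kappa<1$, and $\lambda_1<0$ and $\lambda_2>0$ when $\kappa>1$. In either case the scalar $-\kappa\lambda_2/\lambda_1$ is strictly positive, so equating unit vectors forces $\nu_1(P)=\nu_2(Q)$.

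For the converse, assume $\nu_1(P)=\nu_2(Q)=\nu$. Dotting the first Snell equation with $\nu$ gives $\kappa(m\cdot\nu) = (x\cdot\nu) - \lambda_1 = \sqrt{\kappa^2-1+(x\cdot\nu)^2}$, so
\[
\frac{1}{\kappa^2} - 1 + (m\cdot\nu)^2 = \frac{1-\kappa^2}{\kappa^2} + \frac{\kappa^2-1+(x\cdot\nu)^2}{\kappa^2} = \frac{(x\cdot\nu)^2}{\kappa^2}.
\]
Assuming the incident ray strikes $\Gamma_1$ from the appropriate side so that $x\cdot\nu>0$, the square root in the definition of $\lambda_2$ equals $(x\cdot\nu)/\kappa$, whence a direct computation gives $\lambda_2 = -\lambda_1/\kappa$, i.e.\ $\lambda_1+\kappa\lambda_2=0$. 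The master identity then yields $x-w=0$.

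The main obstacle I anticipate is the bookkeeping of sign conventions: formula \eqref{eq:snellvectorform} requires $\nu$ to point toward the medium the ray enters, which at $P$ means into the lens and at $Q$ means out of the lens, so the statement ``$\nu_1(P)=\nu_2(Q)$'' relies on these two vectors being oriented consistently in the same global sense (both along the direction of light propagation). Once this convention is pinned down, the only delicate points are the sign analysis of $\lambda_1,\lambda_2$ in the forward direction and the short square-root simplification in the converse; both are elementary.
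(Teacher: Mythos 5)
Your proposal is correct and follows essentially the same route as the paper: apply the vector Snell law at $P$ and $Q$, eliminate the interior direction $m$ to get $x-w=\lambda_1\nu_1(P)+(n_2/n_1)\lambda_2\nu_2(Q)$, use the opposite signs of $\lambda_1$ and $\lambda_2$ for the forward implication, and compute $m\cdot\nu$ to show $\lambda_1+(n_2/n_1)\lambda_2=0$ for the converse. Your remark on the orientation convention for the normals is a fair point that the paper leaves implicit, but it does not change the argument.
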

\begin{proof}
From the Snell law applied at $P$ and $Q$
\[
x-(n_2/n_1)\,m=\lambda_1\,\nu_1(P), \qquad m-(n_1/n_2)\,w=\lambda_2\,\nu_2(Q),
\]
then
\begin{equation}\label{eq:x minus w equals normals}
x-w=\lambda_1\,\nu_1(P)+(n_2/n_1)\,\lambda_2\,\nu_2(Q).
\end{equation}
If $x=w$, since $\lambda_1$ and $-(n_2/n_1)\,\lambda_2$ have the same sign and the normals are unit vectors, we conclude
\[
\nu_1(P)=\nu_2(Q).
\]
Conversely, if $\nu_1(P)=\nu_2(Q):=\nu$, then from \eqref{eq:x minus w equals normals} $x-w=\(\lambda_1+(n_1/n_2)\,\lambda_2\, \)\,\nu$. Notice that $m\cdot \nu=(n_1/n_2)\(x\cdot \nu-\lambda_1\)=(n_1/n_2)\sqrt{(n_2/n_1)^2-1+(x\cdot \nu)^2}$. Hence from \eqref{formulaforlambda} 
\begin{align*}
\lambda_1+(n_2/n_1)\,\lambda_2
&=x\cdot \nu-(n_2/n_1)\,m\cdot \nu +(n_2/n_1)\(m\cdot \nu-\sqrt{(n_1/n_2)^2-1+(m\cdot \nu)^2}\)=0.
\end{align*}

\end{proof}

Let us now consider the case of dichromatic light, i.e., a mix of two colors b and r. That is, if a ray with direction $x$ in vacuum strikes a surface $\Gamma$ at $P$ separating two media, then the ray splits into two rays one with color b and direction $m_b$, and another with color r and direction $m_r$. Here  $m_r$ satisfies \eqref{snellwithcrossproduct} with $n_1=1$ and $n_2=n_r$ (the refractive index for the color r) and $m_b$ satisfies \eqref{snellwithcrossproduct} with $n_1=1$ and $n_2=n_b$ (the refractive index for the color b).
Notice $m_b,m_r$ are both in the plane of incidence containing $P$, the vector $x$, and $\nu(P)$ the normal to $\Gamma$ at $P$. 
Assuming $n_b>n_r$, i.e., rays with color r travel faster than rays with color b, then for a given incidence angle $\theta$ by the Snell law the angles of refraction satisfy $\theta_b\leq \theta_r$. In fact
$$\sin \theta=n_b\sin \theta_b=n_r\sin \theta_r,$$
obtaining the following Lemma.

\begin{lemma}\label{item:m vectors are equal} 
Suppose a dichromatic ray with unit direction $x$ strikes a surface $\Gamma$ at a point $P$ having normal $\nu$.
Then $m_b=m_r$ if and only if $x=\nu$.
%
\end{lemma}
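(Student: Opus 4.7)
The strategy is to apply the vector form of Snell's law \eqref{eq:snellvectorform} at $P$ with each refractive index in turn and compare.

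For the ``if'' direction, suppose $x=\nu$. Then $x\cdot\nu=1$, and formula \eqref{formulaforlambda} gives $\Phi_{n_b}(1)=1-n_b$ and $\Phi_{n_r}(1)=1-n_r$. Substituting into \eqref{eq:snellvectorform} with $n_1=1$ and $n_2=n_b$ (respectively $n_r$) yields $\nu-n_b\,m_b=(1-n_b)\,\nu$ and $\nu-n_r\,m_r=(1-n_r)\,\nu$, so $m_b=m_r=\nu$.

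For the ``only if'' direction, assume $m_b=m_r=:m$. Writing Snell's law in vector form at $P$ for both colors,
\begin{align*}
x-n_b\,m &= \lambda_b\,\nu,\\
x-n_r\,m &= \lambda_r\,\nu,
\end{align*}
and subtracting gives $(n_r-n_b)\,m=(\lambda_b-\lambda_r)\,\nu$. Since $n_b\neq n_r$ and both $m$ and $\nu$ are unit vectors, $m$ must be parallel to $\nu$. Feeding this back into either equation forces $x$ to be parallel to $\nu$, and because both are unit vectors with $\nu$ chosen to point into medium II and $x$ the incident direction entering that medium, the two signs align and we conclude $x=\nu$.

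An alternative, equivalent route uses the scalar Snell law: since $m_b$ and $m_r$ lie in the common plane of incidence on the same side of $\nu$, the equality $m_b=m_r$ forces the refraction angles $\theta_b=\theta_r$, and then $n_b\sin\theta_b=n_r\sin\theta_r$ with $n_b\neq n_r$ gives $\sin\theta_b=0$, hence the incidence angle $\theta=0$, i.e., $x=\nu$. The only subtlety in either approach is bookkeeping the orientation so that the conclusion ``parallel'' upgrades to the signed equality $x=\nu$ rather than $x=-\nu$; this is where the convention that $\nu$ points into medium II enters.
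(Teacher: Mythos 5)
Your proposal is correct. The paper does not give a formal proof environment for this lemma; its justification is the one-line scalar argument in the paragraph immediately preceding the statement, namely $\sin\theta=n_b\sin\theta_b=n_r\sin\theta_r$, which with $n_b\neq n_r$ forces $\sin\theta_b=\sin\theta_r=0$ exactly when $\theta=0$. That is precisely your ``alternative route,'' so you have in fact reproduced the paper's argument as a remark. Your primary route, via the vector form \eqref{eq:snellvectorform} applied twice and subtracted, is a legitimately different (and fully rigorous) derivation: the subtraction $(n_r-n_b)\,m=(\lambda_b-\lambda_r)\,\nu$ immediately gives $m\parallel\nu$ (note $\lambda_b\neq\lambda_r$ is automatic, else $m=0$), and back-substitution gives $x\parallel\nu$; the sign is then fixed by the convention $x\cdot\nu>0$. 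The vector computation trades the angle bookkeeping of the scalar law for a small orientation check at the end, and your explicit evaluation $\Phi_{n}(1)=1-n$ in the ``if'' direction is cleaner than invoking angles at normal incidence. Both arguments are sound; no gap.
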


\section{The collimated case: Problem A}\label{The collimated case}
\setcounter{equation}{0}


In this section we consider the following set up. We are given $\Omega\subseteq\R^2$ a compact and convex set with nonempty interior, and $w$ a unit vector in $\R^3$. Dichromatic rays with colors b and r are emitted from $(t,0)$, with $t\in \Omega$, into the vertical direction $e=(0,0,1)$. 
By application of the results from  \cite{gutierrez-sabra:asphericallensdesignandimaging} with $n_1=n_3=1$ and $n_2=n_r$, we have the following.
Given $u\in C^2$, there exist surfaces parametrized by 
$f_r(t)=(t,u(t))+d_r(t)m_r(t)$, with $m_r(t)=\dfrac{1}{n_r}\(e-\lambda_r \nu_u(t)\)$ where $\lambda_r
=\Phi_{n_r}\(e\cdot \nu_u(t)\)$ from \eqref{formulaforlambda}, $\nu_u(t)=\dfrac{\(-\nabla u(t),1\)}{\sqrt{1+|\nabla u(t)|^2}}$ the unit normal at $(t,u(t))$, and
\begin{equation}\label{eq:formula for dbt}
d_r(t)=\dfrac{C_r-(e-w)\cdot (t,u(t))}{n_r-w\cdot m_r(t)} 
\end{equation}
from \cite[Formula (3.14)]{gutierrez-sabra:asphericallensdesignandimaging}, such that lens bounded between $u$ and $f_r$ refracts the rays with color r into $w$. Here the constant $C_r$ is chosen so that $d_r(t)>0$ and $f_r$ has a normal vector at every point. This choice is possible from \cite[Theorem 3.2 and Corollary 3.3]{gutierrez-sabra:freeformgeneralfields}.

 Likewise there exist surfaces parametrized by 
$f_b(t)=(t,u(t))+d_b(t)m_b(t)$, with similar quantities as before with r replaced by b,
such that lens bounded between $u$ and $f_b$ refracts the rays with color b into $w$. 

We assume that $n_b>n_r>1$, where $n_b, n_r$ are the refractive indices of the material of the lens corresponding to monochromatic light having colors b or r, and the medium surrounding the lens is vacuum. 

To avoid total reflection compatibility conditions between $u$ and $w$ are needed, see \cite[condition (3.4)]{gutierrez-sabra:asphericallensdesignandimaging} which in our case reads
\[
\lambda_r\,\nu_u(t)\cdot w\leq e\cdot w-1,\text{ and } \lambda_b\,\nu_u(t)\cdot w\leq e\cdot w-1.
\]
If $w=e$, these two conditions are automatically satisfied because $\lambda_r, \lambda_b$ are both negative and $\nu_u(t)\cdot e=\dfrac{1}{\sqrt{1+|\nabla u(t)|^2}}>0$.

The problem we consider in this section is to determine if there exist $u$ and corresponding surfaces $f_r$ and $f_b$ for each color  such that $f_r$ can be obtained by a reparametrization of $f_b$. That is, if there exist a positive function $u\in C^2(\Omega)$, real numbers $C_r$ and $C_b$, and a continuous map $\p:\Omega \to \Omega$ such that the surfaces $f_r$ and $f_b$, corresponding to $u$, $C_r,C_b$, have normals at each point and 
\begin{equation}\label{eq:equality by re parametrization}
f_r(t)=f_b(\p(t))\qquad \forall t\in \Omega,
\end{equation} 
we refer to this as {\it Problem A}. Notice that if a solution exists 
then $f_r(\Omega) \subseteq f_b(\Omega).$ From an optical point of view, this means that the lens sandwiched between $u$ and $f_b$ refracts both colors into $w$.
Notice that there could be points in $f_b(\Omega)$ that are not reached by red rays. 

The answer to Problem A is given
in the following theorem.

\begin{theorem}\label{thm:monotonicity of phi}
If $w\neq e$, then Problem A has no solution, and if $w=e$ the only solutions to Problem A are lenses enclosed by two horizontal planes.
\end{theorem}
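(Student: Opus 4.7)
The plan is to invoke Brouwer's fixed point theorem on $\p:\Omega\to\Omega$: since $\Omega$ is compact convex and $\p$ is continuous, there is $t_0\in\Omega$ with $\p(t_0)=t_0$. The reparametrization identity \eqref{eq:equality by re parametrization} at $t_0$ reduces to $d_r(t_0)\,m_r(t_0)=d_b(t_0)\,m_b(t_0)$; since $m_r,m_b$ are unit and $d_r,d_b>0$, this forces $d_r(t_0)=d_b(t_0)$ and $m_r(t_0)=m_b(t_0)$. Lemma \ref{item:m vectors are equal} then yields $\nu_u(t_0)=e$, so $\nabla u(t_0)=0$ and both interior directions collapse to $e$ itself. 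The two colors therefore travel vertically to the common exit point $Q_0:=f_r(t_0)=f_b(t_0)$.

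Next I apply Snell's law \eqref{eq:snellvectorform} at $Q_0$ to each color, with common interior incident direction $e$ and common exterior refracted direction $w$:
\begin{equation*}
e-\frac{1}{n_r}w=\mu_r\,\nu_S(Q_0),\qquad e-\frac{1}{n_b}w=\mu_b\,\nu_S(Q_0).
\end{equation*}
Subtracting and using $n_r\neq n_b$ shows $w$ is parallel to $\nu_S(Q_0)$; substituting back into either equation then forces $e$ to also be parallel to $\nu_S(Q_0)$, so $w=\pm e$. The alternative $w=-e$ violates the total-reflection compatibility condition recalled in Section \ref{The collimated case} (one checks $\lambda_r=1-n_r<0<2$, whereas the condition with $\nu_u(t_0)=e$ would require $-\lambda_r\leq -2$). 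Hence $w=e$. This closes the first half: assuming $w\neq e$ is self-contradictory, so Problem A has no solution.

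For the case $w=e$, the argument is more delicate. Lemma \ref{lm:lemma equality of three normals} applied to the red ray gives $\nu_u(t)=\nu_S(f_r(t))$, and applied to the blue ray starting at $\p(t)$ gives $\nu_u(\p(t))=\nu_S(f_b(\p(t)))=\nu_S(f_r(t))$; hence $\nu_u(t)=\nu_u(\p(t))$ on $\Omega$, so $\lambda_r,\lambda_b,m_r,m_b$ all take matching values at $t$ and $\p(t)$. Splitting \eqref{eq:equality by re parametrization} into horizontal and vertical components yields a system whose horizontal part forces $t-\p(t)\parallel\nabla u(t)$ and whose coefficients involve the color-dependent $d_r,d_b,\lambda_r,\lambda_b$. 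The strategy is to show $u$ must be affine; in the affine case $\p$ becomes a pure translation, and boundedness and convexity of $\Omega$ collapse that translation to the identity, so $\p=\mathrm{id}$. With $\p=\mathrm{id}$, $d_r\,m_r\equiv d_b\,m_b$ gives $m_r\equiv m_b$, and Lemma \ref{item:m vectors are equal} then forces $\nu_u\equiv e$; so $u$ is constant and $f_r,f_b$ are also constant, placing the lens between two horizontal planes.

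The main obstacle is the reduction to the affine case. The identity $\nu_u\circ\p=\nu_u$ only says $\p$ preserves the normal and does not by itself pin down $u$, since $\nu_u$ may take the same value at several points. The key observation to exploit is that the horizontal and vertical relations obtained from splitting \eqref{eq:equality by re parametrization} are over-determined: after cancelling the part parallel to $\nabla u$, the remaining scalar equation together with the explicit dependence of $\lambda_r,\lambda_b$ on $n_r,n_b$ forces the identity $\lambda_r=\lambda_b$, which contradicts $n_r\neq n_b$ at any $t$ with $\nabla u(t)\neq 0$. Executing this cleanly, while carefully tracking which quantities are evaluated at $t$ and which at $\p(t)$, is the delicate step; the Brouwer fixed point provides the base case $\nabla u(t_0)=0$ from which the identity can be propagated.
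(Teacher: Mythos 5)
Your first half is correct and follows the paper's route: Brouwer's theorem gives a fixed point $t_0$ of $\p$, Lemma \ref{lem:fixed point} gives $\nu_u(t_0)=e$, and Snell's law at the common exit point forces $w=e$. Your extra care in first deducing $w=\pm e$ and then excluding $w=-e$ via the compatibility condition is a legitimate filling-in of the paper's terse ``it follows again from the Snell's law that $w=e$.''

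The second half, however, has a genuine gap. You correctly derive $\nu_u=\nu_u\circ\p$ (this is \eqref{eq:equality of normals}, obtained from Lemma \ref{lm:lemma equality of three normals}), but the passage from there to ``$u$ is affine'' is only announced, not proved, and the mechanism you propose for it does not work. You claim that the single vector identity $f_r(t)=f_b(\p(t))$, after cancelling the component along $\nabla u$, ``forces $\lambda_r=\lambda_b$.'' It does not: at a point $t$ with $t_1=\p(t)\neq t$, the red ray issuing from $(t,u(t))$ and the blue ray issuing from $(t_1,u(t_1))$ are two non-parallel lines in the common plane of incidence, and nothing prevents them from meeting at a point compatible with the formulas for $d_r(t)$ and $d_b(t_1)$; a single such coincidence carries no contradiction with $n_r\neq n_b$. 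The obstruction is not pointwise but dynamical. The paper's argument iterates $\p$: starting from a hypothetical $t_0$ with $\nabla u(t_0)\neq 0$, the orbit $t_k=\p(t_{k-1})$ stays on the segment $\ell=\Omega\cap\Pi$ (with $\Pi$ the common plane of incidence), consecutive points are distinct by Lemma \ref{lem:fixed point}, and the divergence of the blue and red rays (from $n_b>n_r$) forces the sequence to be monotone on $\ell$; being monotone and bounded it converges to a fixed point $\hat t$ of $\p$, where Lemma \ref{lem:fixed point} gives $\nabla u(\hat t)=0$, contradicting $\nabla u(t_k)=\nabla u(t_0)\neq 0$ and the continuity of $\nabla u$. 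This yields $u$ constant directly --- there is no affine intermediate case and no need to show $\p=\mathrm{id}$ --- after which $m_r=m_b=e$ and the constancy of $d_r,d_b$ place the upper face in a horizontal plane. To complete your proof you would need to supply an argument of this global, iterative type; the pointwise over-determinacy you invoke is not there.
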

To prove this theorem we need the following lemma.
\begin{lemma}\label{lem:fixed point}
Given a surface described by $u\in C^2(\Omega)$ and the unit direction $w$, let $f_r$ and $f_b$ be the surfaces parametrized as above. If $f_r(t)=f_b(t)$ for some $t\in \Omega$, then $\nu_u(t)=e$, the unit normal vector to $u$ at $(t,u(t))$.

\end{lemma}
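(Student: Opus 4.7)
The plan is very short because Lemma~\ref{item:m vectors are equal} does essentially all the work. First I would unpack the parametrizations given earlier in the section, namely
\[
f_r(t)=(t,u(t))+d_r(t)\,m_r(t),\qquad f_b(t)=(t,u(t))+d_b(t)\,m_b(t),
\]
and subtract, so that the hypothesis $f_r(t)=f_b(t)$ becomes $d_r(t)\,m_r(t)=d_b(t)\,m_b(t)$.

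Next I would exploit that $d_r(t),d_b(t)>0$ (this positivity is built into the choice of $C_r,C_b$ in \eqref{eq:formula for dbt}) and that $m_r(t),m_b(t)$ are unit vectors: taking norms on both sides yields $d_r(t)=d_b(t)$, and hence $m_r(t)=m_b(t)$. At this stage I have reduced the geometric statement to an assertion purely about the Snell refraction at the lower surface.

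Finally I would apply Lemma~\ref{item:m vectors are equal} to the incident ray at the point $(t,u(t))\in\mathrm{graph}(u)$. The incident direction there is the fixed emission direction $x=e$, the surface is the graph of $u$, and the unit normal is $\nu_u(t)$. Since we just showed $m_b(t)=m_r(t)$, Lemma~\ref{item:m vectors are equal} gives $e=\nu_u(t)$, which is exactly the conclusion.

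There is no real obstacle: the only thing to keep an eye on is that the positivity of $d_r,d_b$ is used to conclude $d_r=d_b$ from the vector equality (otherwise one could in principle have $d_r=d_b=0$ with $m_r\neq m_b$, which is ruled out by the construction), and that the incident direction used in invoking Lemma~\ref{item:m vectors are equal} is $e$, so the conclusion of that lemma is precisely $\nu_u(t)=e$ rather than some other alignment.
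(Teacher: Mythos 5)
Your proposal is correct and is essentially identical to the paper's own proof: subtract the parametrizations to get $d_r(t)\,m_r(t)=d_b(t)\,m_b(t)$, use that $m_r,m_b$ are unit (and $d_r,d_b>0$) to conclude $d_r(t)=d_b(t)$ and hence $m_r(t)=m_b(t)$, then invoke Lemma \ref{item:m vectors are equal} with incident direction $e$. Your extra remark about needing positivity of $d_r,d_b$ is a correct (and slightly more careful) observation than the paper's one-line justification.
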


\begin{proof}
Since
\[
f_b(t)=(t,u(t))+d_b(t)\,m_b(t)=f_r(t)=(t,u(t))+d_r(t)\,m_r(t)
\] 
we get $d_b(t)\,m_b(t)=d_r(t)\,m_r(t)$, and since $m_r,m_b$ are unit, $d_b(t)=d_r(t)$. Therefore
$m_b(t)=m_r(t)$ which from Lemma \ref{item:m vectors are equal} implies that $\nu_u(t)=e$.
\end{proof}

\begin{proof}[Proof of Theorem \ref{thm:monotonicity of phi}]

To show the first part of the theorem, suppose by contradiction that Problem A has a solution with $w\neq e$. 
Since $\Omega$ is compact and convex, by Brouwer fixed point theorem \cite[Sect. 2, Chap. XVI]{dugundji:topology} there is $t_0\in \Omega$ such that $\p(t_0)=t_0$, and so from \eqref{eq:equality by re parametrization} $f_r(t_0)=f_b(t_0)$. Hence from Lemma \ref{lem:fixed point} $\nu_u(t_0)=e$. By Snell's law at $(t_0,u(t_0))$, $m_b(t_0)=m_r(t_0)=e$. Since $n_r\neq n_b$, and both colors with direction $e$ are refracted at $f_b(t_0)=f_r(t_0)$ into the direction $w$, it follows again from the Snell's law that $w=e$, a contradiction. 


To show the second part of the Theorem, assume there exist $u$ and $\varphi:\Omega\to \Omega$ such that problem A has a solution. Let $t\in \Omega$, and $Q=f_r(t)=f_b(\varphi(t))$. Since the ray emitted from $(t,0)$ with direction $e$ and color $r$ is refracted by $(u,f_r)$ into $e$ at $Q$, then by Lemma \ref{lm:lemma equality of three normals} $\nu_u(t)=\nu(Q)$, where $\nu(Q)$ denotes the normal to the upper face of the lens at $Q$. Similarly, applying Lemma \ref{lm:lemma equality of three normals} to the color b we have $\nu_u(\varphi(t))=\nu(Q)$. We conclude that for every $t\in \Omega$
\begin{equation}\label{eq:equality of normals}
\nu_u(t)=\nu_u(\varphi(t)).
\end{equation}

We will show that $u$ is constant, i.e., $\nabla u(t)=0$, for all $t\in \Omega$ connected. Suppose by contradiction that there exists $t_0\in \Omega$, with $\nabla u(t_0)\neq 0$. If $t_1=\p(t_0)$, then $t_1\neq t_0$. Otherwise, from Lemma \ref{lem:fixed point} $\nu_u(t_0)=\dfrac{\(-\nabla u(t_0),1\)}{\sqrt{1+\left|\nabla u(t_0)\right|^2}}= e$, so $\nabla u(t_0)=0$. Also from \eqref{eq:equality of normals}, $\nu_u(t_0)=\nu_u(t_1)$.
 Let $L_r(t_0)$ be the red ray from $(t_0,u(t_0))$ to $f_r(t_0)$, and let $L_b(t_1)$
be the blue ray from $(t_1,u(t_1))$ to $f_b(t_1)$. We have that $L_r(t_0)$ and $L_b(t_1)$ intersect at $Q_0:=f_r(t_0)=f_b(t_1)$.
If $\Pi_r$ denotes the plane of incidence passing through $(t_0,u(t_0))$ containing the directions $e$ and $\nu_u(t_0)$,
and $\Pi_b$ denotes the plane of incidence through $(t_1,u(t_1))$ containing the directions $e$ and $\nu_u(t_1)$, then $\Pi_r$ and $\Pi_b$ are parallel since $\nu_u(t_0)=\nu_u(t_1)$. 
Also by Snell's law $L_r(t_0)\subset \Pi_r$ and $L_b(t_1)\subset \Pi_b$, so  $Q_0\in \Pi_r \cap \Pi_b$. We then obtain $\Pi_r= \Pi_b:=\Pi$.
 
Let $\ell$ denote the segment $\Omega\cap \Pi$. We deduce from the above that  $t_0,t_1\in \ell$. Next, let $t_2=\p(t_1)$. As before, since $\nabla u(t_1)=\nabla u(t_0)\neq 0$, by Lemma \ref{lem:fixed point} $t_2\neq t_1$; and by \eqref{eq:equality of normals} $\nu_u(t_1)=\nu_u(t_2)$. Let $\Pi_2$ be the plane through $(t_2,u(t_2))$ and containing the vectors $e$ and $\nu_u(t_2)$.
We have $L_b(t_2)\subset \Pi_2$, $f_r(t_1)=f_b(t_2)$, and $f_r(t_1)\in \Pi$. 
Therefore $\Pi_2=\Pi$, in particular, $t_2\in \ell$.

Let $\ell_1$ denote the half line starting from $t_1$ and containing $t_0$. We claim that $t_2\notin \ell_1$. 
In fact, we first have that $L_r(t_0)$ and $L_b(t_1)$ intersect at $Q_0$. Since $\nu_u(t_0)=\nu_u(t_1):=\nu$, $L_r(t_0)$ is parallel to $L_r(t_1)$, and $L_b(t_0)$ is parallel to $L_b(t_1)$. And since $n_b>n_r$, it follows from the Snell law that the angle of refraction $\theta_b$ for the blue ray $L_b(t_0)$ and the angle of refraction $\theta_r$ of the red ray $L_r(t_1)$ satisfy $\theta_b<\theta_r$. Hence $L_b(t_0)$ and $L_r(t_1)$ diverge. Moreover, notice that all rays are on the plane $\Pi$, and $L_b(t_2)$ is parallel to $L_b(t_1)$. Then, if $t_2\in \ell_1$ , we have $L_b(t_2)$ and $L_r(t_1)$ diverge and cannot intersect, a contradiction since $f_b(t_2)=f_r(t_1)$ and the claim is proved;
%
%
%
%
see Figure \ref{fig:divergent rays} illustrating that $t_2$ cannot be on $\ell_1$. 
\begin{figure}[htp]
\includegraphics[width=3in]{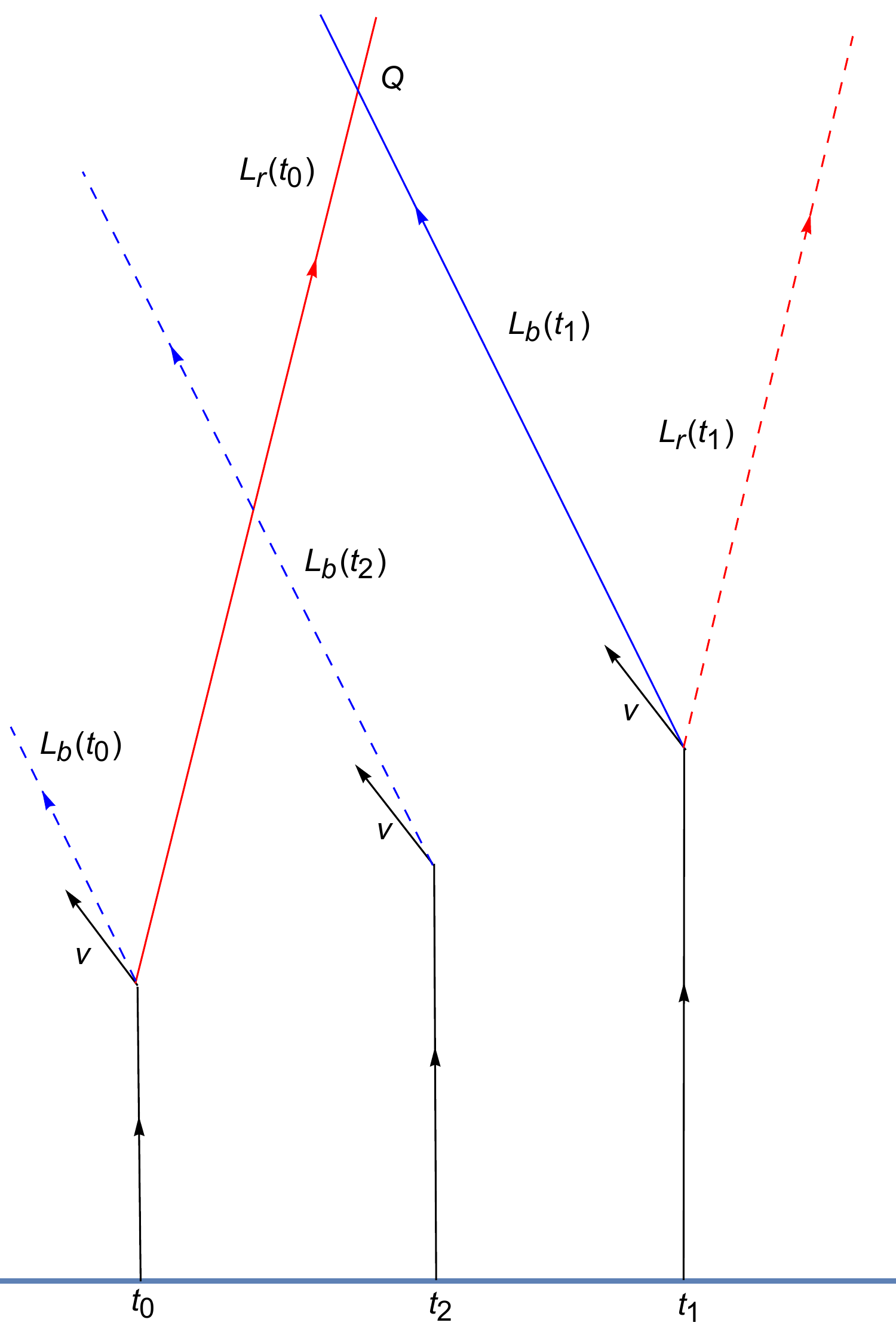}
\caption{$t_2\notin \ell_1$}
\label{fig:divergent rays}
\end{figure}

Continuing in this way we construct the sequence $t_k=\p(t_{k-1})$. 
By \eqref{eq:equality of normals} $\nu_u(t_k)=\nu_u(t_{k-1})=\cdots=\nu_u(t_0)\neq 0$, 
and again by Lemma \ref{lem:fixed point} $t_k\neq t_{k-1}$. 
By Snell's law $\{L_b(t_k)\}$ are all parallel, $\{L_r(t_k)\}$ are all parallel and arguing as before they are all contained in $\Pi$, and then $t_k\in \ell$. In addition, the angles between $L_r(t_k)$ and 
$L_b(t_k)$ are the same for all $k$. Also, for $k\geq 1$, if $\ell_k$ is the half line with origin $t_{k}$ and passing through $t_{k-1}$, then as above $t_{k+1}\notin \ell_k$. Hence the 
sequence $\{t_k\}$ is decreasing or increasing on the line $\ell$. Therefore $t_k$ converges to some $\hat t\in \ell$ so by continuity $\p(\hat t)=\hat t$. Hence by Lemma \ref{lem:fixed point} 
$\nabla u(\hat t)=0$, but since $\nabla u(t_k)=\nabla u(t_0)\neq 0$ for all $k$, and $u$ is $C^2$ we obtain a contradiction. Thus $u$ is constant in $\Omega$. Since the lower face is then contained in a horizontal plane, $m_r(t)=m_b(t)=e=(0,0,1)$. Hence from the form of the parameterizations of $f_b$ and $f_r$, and since from \eqref{eq:formula for dbt} $d_r$ and $d_b$ are constants, the upper face of the lens is also contained in a horizontal plane.
\end{proof}

\subsection{Estimates of the upper surfaces for two colors}\label{subsec:difference of the upper surface}

The purpose of this section is to measure how far the surfaces $f_r$ and $f_b$ can be when $w=e$. We shall prove the following.

\begin{proposition}\label{prop:estimate for fr and fb passing through a fixed point}
Suppose $f_r(t_0)=f_b(t_0)$  at some point $t_0$, and $w=e$. Then 
\[
|f_r(t)-f_b(t)|\leq \bar C\,|n_b-n_r|
\] 
for all $t$ with a constant $\bar C$ depending only $t_0$ and $n_r,n_b$.
\end{proposition}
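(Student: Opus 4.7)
The plan is to realize $f_r$ and $f_b$ as the endpoints of a smooth one-parameter family of surfaces indexed by the refractive index $n$, and then apply the mean value theorem in $n$.

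First, by Lemma~\ref{lem:fixed point}, the hypothesis $f_r(t_0)=f_b(t_0)$ forces $\nu_u(t_0)=e$, hence $\nabla u(t_0)=0$. Substituting into the definitions gives $\lambda_r(t_0)=1-n_r$ and $\lambda_b(t_0)=1-n_b$, so $m_r(t_0)=m_b(t_0)=e$, and consequently $d_r(t_0)=d_b(t_0)=:D$. Evaluating \eqref{eq:formula for dbt} at $t_0$ with $w=e$ then yields $C_r=D(n_r-1)$ and $C_b=D(n_b-1)$, so both normalizing constants are determined by the single parameter $D=d_r(t_0)$.

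Next I would introduce the interpolating family, defined for $n$ in an open interval containing $[n_r,n_b]$ by
\begin{equation*}
m(t,n)=\frac{1}{n}\bigl(e-\Phi_n(e\cdot\nu_u(t))\,\nu_u(t)\bigr),\quad d(t,n)=\frac{D(n-1)}{n-e\cdot m(t,n)},\quad f(t,n)=(t,u(t))+d(t,n)\,m(t,n).
\end{equation*}
Then $f(t,n_r)=f_r(t)$ and $f(t,n_b)=f_b(t)$. Since $\Phi_n(s)=s-\sqrt{n^2-1+s^2}$ is jointly smooth in $(s,n)$ for $n>1$, $u\in C^2(\Omega)$, and $\Omega$ is compact, the map $n\mapsto f(t,n)$ is $C^1$ with derivatives controlled uniformly in $t\in\Omega$. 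The mean value theorem then delivers
\begin{equation*}
|f_r(t)-f_b(t)|\le |n_r-n_b|\cdot\sup_{n\in[n_r,n_b]}\,\sup_{t\in\Omega}|\partial_n f(t,n)|,
\end{equation*}
which is the desired estimate, with $\bar C$ the supremum on the right.

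The main technical point is to verify that $\partial_n f$ is uniformly bounded on $\Omega\times[n_r,n_b]$. Differentiating $m$ and $d$ in $n$ and using the chain rule on $\Phi_n$, the numerators and their derivatives are easily controlled by compactness of $\Omega$ and the $C^2$ regularity of $u$, so the matter reduces to a positive lower bound for the denominator $n-e\cdot m(t,n)$ on $\Omega\times[n_r,n_b]$. Admissibility of the lenses $(u,f_r)$ and $(u,f_b)$ forces $d_r,d_b>0$ on $\Omega$, which gives pointwise positivity of this denominator at the endpoints $n=n_r,n_b$; joint continuity in $(t,n)$ together with compactness of $\Omega\times[n_r,n_b]$ upgrades that to a uniform lower bound, closing the argument. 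The dependence of $\bar C$ on $t_0$ enters only through $D=d_r(t_0)$, and the dependence on $n_r,n_b$ comes from the interval over which the supremum is taken.
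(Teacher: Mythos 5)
Your argument is correct, and it takes a genuinely different route from the paper's. The paper proceeds by direct algebraic estimation: it bounds $|m_b(t)-m_r(t)|$ by splitting the difference into an $e$-component and a $\nu$-component and rationalizing the difference of the two square roots, then bounds $|d_r(t)-d_b(t)|$ by an explicit computation of the cross term $\Delta$ using $C_r/(n_r-1)=C_b/(n_b-1)=d_0$, and finally combines via $f_r-f_b=d_r(m_r-m_b)-(d_b-d_r)m_b$. You instead embed $f_r,f_b$ into a one-parameter family $f(t,n)$ and apply the mean value theorem in $n$; the identification of the common normalization $D=d_r(t_0)=d_b(t_0)$ via Lemma \ref{lem:fixed point} plays the same role in both proofs. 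Your approach is softer and shorter, and generalizes immediately (e.g.\ to comparing any two indices in an interval); the paper's approach yields fully explicit constants. Two small points. First, your justification of the lower bound on the denominator is not quite right as stated: positivity of $d_r,d_b$ gives positivity of $n-e\cdot m(t,n)$ only at the two endpoints $n=n_r,n_b$, and continuity does not propagate positivity to intermediate $n$. But the bound is immediate anyway: $m(t,n)$ is a unit vector for every $n>1$ (this is exactly what the choice $\lambda=\Phi_n(e\cdot\nu)$ guarantees), so $n-e\cdot m(t,n)\geq n-1\geq n_r-1>0$ uniformly — the same inequality the paper uses. Second, invoking "compactness of $\Omega$ and the $C^2$ regularity of $u$" suggests a constant depending on $u$, whereas the proposition asserts dependence only on $t_0$ and $n_r,n_b$. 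In fact your $\sup|\partial_n f|$ is independent of $u$: since $\nu_u$ is a unit vector with $e\cdot\nu_u\in(0,1]$, one has $|\Phi_n(e\cdot\nu_u)|\leq n$ and $|\partial_n\Phi_n(e\cdot\nu_u)|\leq n/\sqrt{n^2-1}$, and $d(t,n)\leq D$, so $|\partial_n f|$ is bounded by a quantity depending only on $D$ and $n_r,n_b$. With these two adjustments your proof fully recovers the stated result.
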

\begin{proof}
We begin showing an upper estimate of the difference between $m_r(t)$ and $m_b(t)$.
To simplify the notation write $\nu=\nu_u$.

We have
\begin{align*}
m_b(t)&=\dfrac{1}{n_b}\left( e-\lambda_b\,\nu(t)\right);\qquad  \lambda_b=e\cdot \nu-\sqrt{n_b^2-1+(e\cdot \nu)^2};\\
m_r(t)&=\dfrac{1}{n_r}\left( e-\lambda_r\,\nu(t)\right);\qquad  \lambda_r=e\cdot \nu-\sqrt{n_r^2-1+(e\cdot \nu)^2}.
\end{align*}
So
\[
m_b(t)-m_r(t)
=
\left(\dfrac{1}{n_b}- \dfrac{1}{n_r}\right)\,e
+
\left(\dfrac{\lambda_r}{n_r}- \dfrac{\lambda_b}{n_b}\right)\,\nu(t):=A+B.
\]
Notice $|A|= \dfrac{|n_b-n_r|}{n_b\,n_r}$.
Next write
\begin{align*}
\left(\dfrac{\lambda_r}{n_r}- \dfrac{\lambda_b}{n_b}\right)
&=
\dfrac{1}{n_b\,n_r}\,\left(n_b\,\lambda_r-n_r\,\lambda_b \right)\\
&=
\dfrac{1}{n_b\,n_r}\,
\left\{\left(n_b-n_r \right)\,(e\cdot \nu(t)) 
+
n_r\,\sqrt{n_b^2-1+(e\cdot \nu)^2}-n_b\,\sqrt{n_r^2-1+(e\cdot \nu)^2}
\right\}.
\end{align*}
Now 
\[
n_r\,\sqrt{n_b^2-1+(e\cdot \nu)^2}-n_b\,\sqrt{n_r^2-1+(e\cdot \nu)^2}
=
\dfrac{(n_b^2-n_r^2)\,(1-(e\cdot \nu)^2)}{n_r\,\sqrt{n_b^2-1+(e\cdot \nu)^2}+n_b\,\sqrt{n_r^2-1+(e\cdot \nu)^2}}.
\]
Hence
\begin{align*}
|B|&\leq \dfrac{|n_b-n_r|}{n_b\,n_r}
+
\dfrac{1}{n_b\,n_r}\,\left( \dfrac{|n_b^2-n_r^2|}{n_r\,\sqrt{n_b^2-1}+n_b\,\sqrt{n_r^2-1}}\right)
\end{align*}
and therefore
\begin{equation}\label{eq:estimate of mr-mb}
\left|m_b(t)-m_r(t) \right|
\leq \dfrac{\left|n_b-n_r \right|}{n_b\,n_r}\,\left(2+\dfrac{n_b+n_r}{n_r\,\sqrt{n_b^2-1}+n_b\,\sqrt{n_r^2-1}} \right).
\end{equation}

We next estimate $d_r(t)-d_b(t)$, where
\begin{align*}
d_r(t)=\dfrac{C_r}{n_r-m_r(t)\cdot e}, \qquad d_b(t)=\dfrac{C_b}{n_b-m_b(t)\cdot e},
\end{align*}
by \eqref{eq:formula for dbt}.
From Lemma \ref{lem:fixed point}, since $f_r(t_0)=f_b(t_0)$, $\nu(t_0)=e=(0,0,1)$. Then by the Snell law $m_r(t_0)=m_b(t_0)=e$, and from the parametrization of $f_r$ and $f_b$, $d_r(t_0)=d_b(t_0):=d_0$. Hence
\[
\dfrac{C_b}{n_b-1}=\dfrac{C_r }{n_r-1}=d_0.
\]
We then obtain 
\begin{align*}
&d_r(t)-d_b(t)\\
&=\dfrac{d_0}{(n_r-m_r(t)\cdot e)(n_b-m_b(t)\cdot e)}\left((n_b-m_b(t)\cdot e)(n_r-1)-(n_r-m_r(t)\cdot e)(n_b-1)\right)\\
&=\dfrac{d_0}{(n_r-m_r(t)\cdot e)(n_b-m_b(t)\cdot e)}\,\Delta.
\end{align*}
Now
\begin{align*}
\Delta&=
n_r-n_b+\(m_b(t)-m_r(t)\)\cdot e+n_b\,m_r(t)\cdot e-n_r\,m_b(t)\cdot e\\
&=
n_r-n_b+\(m_b(t)-m_r(t)\)\cdot e+(n_b-n_r)\,m_r(t)\cdot e+n_r\,\(m_r(t)-m_b(t)\)\cdot e,
\end{align*}
so from \eqref{eq:estimate of mr-mb}
\[
|\Delta|\leq  C(n_r,n_b)\,|n_r-n_b|.
\]
Since $(n_r-m_r(t)\cdot e)(n_b-m_b(t)\cdot e)\geq (n_r-1)(n_b-1)$, we obtain
\begin{equation}\label{eq:estimate difference dr minus db}
|d_r(t)-d_b(t)|\leq C' \,|n_r-n_b|,
\end{equation}
with $C'$ depending on $d_0,n_r,n_b$.

Finally write
\[
f_r(t)-f_b(t)=d_r(t)m_r(t)-d_b(t)m_b(t)=d_r(t)\(m_r(t)-m_b(t)\)-\(d_b(t)-d_r(t)\)m_b(t).
\]
Since $d_r(t)\leq C_r/(n_r-1)=d_0$, then the desired estimate follows from \eqref{eq:estimate of mr-mb} and \eqref{eq:estimate difference dr minus db}.
\end{proof}

We conclude this section analyzing the intersection of the upper surfaces of the single lenses $(u,f_r)$ and $(u,f_b)$.
\begin{proposition}\label{lm:grad u injective implies no lens exist}
Let $w=e$. If $\nu_u(t)\neq e$ for all $t\in \Omega$, and $\nabla u$ is injective in $\Omega$, then $S_r\cap S_b=\emptyset$, where $S_r=f_r(\Omega)$ and $S_b=f_b(\Omega)$.
This means that, the upper surfaces of the single lenses $(u,f_r)$ and $(u,f_b)$ are disjoint.
\end{proposition}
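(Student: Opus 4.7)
The plan is to argue by contradiction. Suppose, to the contrary, that $S_r\cap S_b\neq\emptyset$. Then there exist parameters $t_1,t_2\in\Omega$ with
\[
Q:=f_r(t_1)=f_b(t_2).
\]
The strategy is to use Lemma \ref{lm:lemma equality of three normals} to pin down the normal at $Q$ from both refraction histories and then force $t_1=t_2$ via the injectivity hypothesis, at which point Lemma \ref{lem:fixed point} will produce the contradiction with $\nu_u(t)\neq e$.

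First I would note that the red ray under consideration enters at $(t_1,0)$ with direction $e$, strikes the lower surface at $(t_1,u(t_1))$, travels inside the lens, and exits at $Q$ with final direction $w=e$. Since the incident and final directions agree ($x=w=e$), Lemma \ref{lm:lemma equality of three normals} applies with $\Gamma_1$ the graph of $u$ and $\Gamma_2$ the upper surface, yielding
\[
\nu_u(t_1)=\nu(Q),
\]
where $\nu(Q)$ denotes the unit normal to the upper surface at $Q$. Applying the same lemma to the blue ray, which starts at $(t_2,0)$ and exits at the same point $Q$ with direction $e$, gives $\nu_u(t_2)=\nu(Q)$. Consequently
\[
\nu_u(t_1)=\nu_u(t_2).
\]

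Second, because $\nu_u(t)=(-\nabla u(t),1)/\sqrt{1+|\nabla u(t)|^2}$, the identity of normals is equivalent to the identity of gradients $\nabla u(t_1)=\nabla u(t_2)$. The hypothesis that $\nabla u$ is injective on $\Omega$ then forces $t_1=t_2$. At this common parameter value we have $f_r(t_1)=f_b(t_1)$, so Lemma \ref{lem:fixed point} implies $\nu_u(t_1)=e$, contradicting the standing assumption $\nu_u(t)\neq e$ for all $t\in\Omega$. Hence $S_r\cap S_b=\emptyset$.

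There is no serious obstacle here: the argument is essentially a clean combination of the two lemmas already in hand with the injectivity hypothesis. The only point that requires a moment of care is verifying that Lemma \ref{lm:lemma equality of three normals} is applied with the correct orientation (medium $1$ outside, medium $2$ inside the lens, incident direction $x=e$ and final direction $w=e$), so that the equality $w=x$ really does trigger the conclusion $\nu_1(P)=\nu_2(Q)$ for both colors separately.
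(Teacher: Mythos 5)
Your argument is correct and is essentially identical to the paper's proof: the paper likewise takes $f_r(t_0)=f_b(t_1)$, invokes the same application of Lemma \ref{lm:lemma equality of three normals} to both colors (referring back to the derivation of \eqref{eq:equality of normals}) to get $\nu_u(t_0)=\nu_u(t_1)$, uses injectivity of $\nabla u$ to force $t_0=t_1$, and concludes via Lemma \ref{lem:fixed point} that $\nu_u(t_0)=e$, a contradiction. The only difference is presentational — you spell out the two applications of the lemma that the paper cites by reference.
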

\begin{proof}
Suppose $P\in S_r\cap S_b$, i.e. there exists $t_0, t_1\in \Omega$ such that $f_r(t_0)=f_b(t_1)$, then as in the proof of \eqref{eq:equality of normals} we get $\nu_u(t_0)=\nu_u(t_1)$, and therefore $\nabla u(t_0)=\nabla u(t_1)$.
Since $\nabla u$ is injective, then $t_0=t_1$ and so $f_r(t_0)=f_b(t_0)$. Therefore from the proof of Lemma \ref{lem:fixed point} we conclude that $\nu_u(t_0)=e$.
\end{proof}

\begin{remark}\label{rmk:parallel planes}\rm
When $\nabla u$ is not injective the upper surfaces of the single lenses $(u,f_r)$ and $(u,f_b)$ may or may not be disjoint.
We illustrate this with lenses bounded by parallel planes.
In fact, by Lemma \ref{lm:lemma equality of three normals}
such a lens refracts all rays blue and red into the vertical direction $e$.
Notice that if the planes are sufficiently far apart, depending on the refractive indices, then $f_r(\Omega)\cap f_b(\Omega)=\emptyset$. This is illustrated in Figure \ref{fig:parallel-planes}: if the lens in between the planes $A$ and $B$, then $f_r(\Omega)\cap f_b(\Omega)\neq\emptyset$; and if the lens is between the planes $A$ and $C$, then $f_r(\Omega)\cap f_b(\Omega)=\emptyset$.
\begin{figure}
\includegraphics[width=3in]{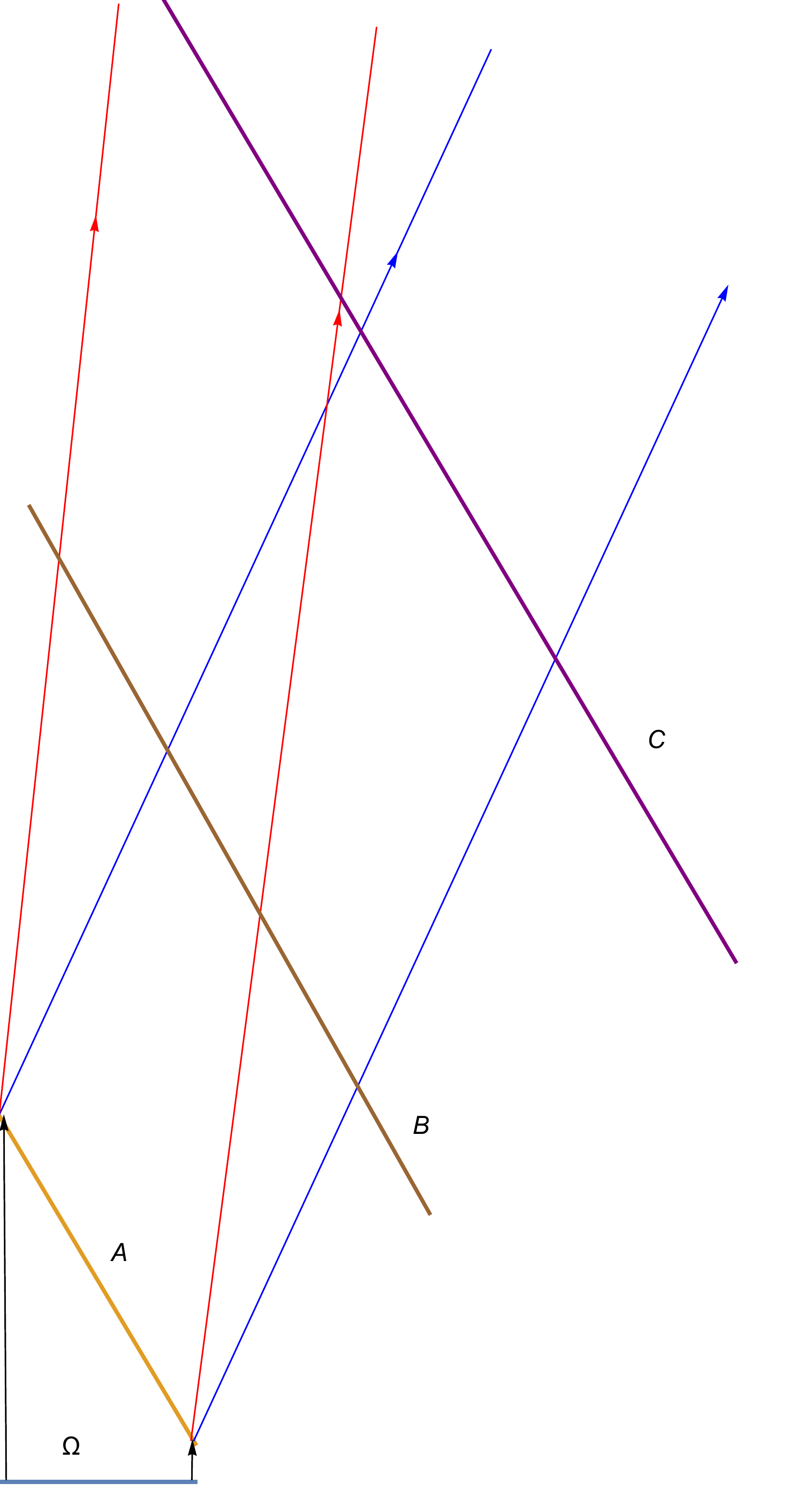}
\caption{The lens is between $A$ and $B$ or between $A$ and $C$.}
\label{fig:parallel-planes}
\end{figure}

\end{remark}

\section{First order functional differential equations}\label{sec:FOD}
\setcounter{equation}{0}

In this section we give a new and simpler proof of an existence theorem for functional differential equations 
originally due to J. Rogers \cite[Sec. 2]{rogers1988:picardtypethemlensdesign}. It will be used in Section \ref{sec:one point source problem} to show the existence of a dichromatic lens when rays emanate from one point source.
Developing an extension of Picard's iteration method for functional equations, Van-Brunt and Ockendon gave another proof of that theorem, \cite{vanbrunt-ockendon:lenstwowavelengths}.
We present a topological proof that we believe has independent interest and uses the Banach fixed point theorem.

Let $H$ be a continuous map defined in an open domain in $\R^{4n+1}$ with values in $\R^n$ given by
\begin{equation}\label{eq:definition map H}
H:=H(X)=\(h_1(X),h_2(X),\cdots, h_{n}(X)\),
\end{equation}
with $X:=\(t;\zeta^0,\zeta^1;\xi^0,\xi^1\),  t\in \R; \zeta^0,\zeta^1,\xi^0,\xi^1\in \R^n.$\\


We are interested in solving the following system of functional differential equations
\begin{align}
Z'(t)&=H\(t;Z(t),Z(z_1(t));Z'(t),Z'(z_1(t))\)\label{eq:System}\\
Z(0)&=0,\notag
\end{align}
with $Z(t)=(z_1(t),\cdots,z_n(t))$.

\begin{theorem}\label{thm: Existence}
Let $\|\cdot\|$ be a norm in $\R^n$.
Assume that the system
\begin{equation}\label{eq:system P}
P=H\(0;{\bf 0,0};P,P\),
\end{equation} 
has a solution $P=\(p_1,p_2,\cdots,p_n\)$ such that
\begin{equation}\label{eq:First Component}
|p_1|\leq 1.
\end{equation}

Let $\mathcal P=\left(0;{\bf 0,0};P,P\right)\in R^{4n+1}$, and let
$$
N_{\varepsilon}(\mathcal P)=\left\{\(t; \zeta^0,\zeta^1;\xi^0,\xi^1\): |t|+ \|\zeta^0\|+\|\zeta^1\|+\|\xi^0-P\|+\|\xi^1-P\|\leq\varepsilon\right\}
$$
be a neighborhood of $\mathcal P$ such that 
\begin{enumerate}[(i)]
\item $H$ is uniformly Lipschitz in the variable $t$, i.e., there exists $\Lambda>0$ such that
\begin{equation}\label{eq:Lip in t}
\left\|H\(\bar t;{ \zeta^0, \zeta^1;  \xi^0,  \xi^1}\)-H\( t;{\zeta^0, \zeta^1;  \xi^0, \xi^1}\)\right\|\leq \Lambda\,|\bar t-t|.
\end{equation}
for all $\(\bar t;{ \zeta^0, \zeta^1;  \xi^0,  \xi^1}\), \( t;{ \zeta^0, \zeta^1; \xi^0,  \xi^1}\)\in  N_{\varepsilon}(\mathcal P)$;
\item $H$ is uniformly Lipschitz in the variables $\zeta^0$ and $\zeta^1$, i.e., there exist positive constants $L_0$ and $L_1$ such that
\begin{equation}\label{eq:Lip in zeta0,zeta1}
\left\|H\(t;{ \bar\zeta^0, \bar \zeta^1;  \xi^0,  \xi^1}\)-H\(t;{ \zeta^0, \zeta^1;  \xi^0, \xi^1}\)\right\|\leq L_0\left\|\bar \zeta^0-\zeta^0\right\|+L_1\left\|\bar\zeta^1-\zeta^1\right\|,
\end{equation}
for all $\(t;{ \bar\zeta^0, \bar\zeta^1;  \xi^0,  \xi^1}\), \(t;{ \zeta^0, \zeta^1; \xi^0,  \xi^1}\)\in  N_{\varepsilon}(\mathcal P)$;
\item $H$ is a uniform contraction in the variables $\xi^0$ and $\xi^1$, i.e., there exists constants $C_0$ and $C_1$ such that
\begin{equation}\label{eq:Lip in xi0,xi1}
\left\|H\(t;{ \zeta^0, \zeta^1;  \bar \xi^0, \bar \xi^1}\)-H\( t;{ \zeta^0, \zeta^1;  \xi^0, \xi^1}\)\right\|\leq C_0\left\|\bar \xi^0-\xi^0\right\|+C_1\left\|\bar\xi^1-\xi^1\right\|,
\end{equation}
for all $\(t;{ \zeta^0, \zeta^1;  \bar\xi^0,  \bar\xi^1}\), \(t;{ \zeta^0, \zeta^1; \xi^0,  \xi^1}\)\in  N_{\varepsilon}(\mathcal P)$,
with
\begin{equation}\label{eq:Contraction}
C_0+C_1<1;
\end{equation}
\item For all $X\in N_{\varepsilon}(\mathcal P)$
\begin{equation}\label{eq:bd on h1}
|h_1(X)|\leq 1.
\end{equation}
\end{enumerate}
Under these assumptions, there exists $\delta>0$ and $Z\in C^1[-\delta,\delta]$ with 
$$\(t;Z(t),Z(z_1(t));Z'(t),Z'(z_1(t))\)\in N_{\varepsilon}(\mathcal P)$$ 
and $Z$ 
solving the system
\begin{equation}\label{eq:functional equations}
\begin{cases}
Z'(t)=H\(t;Z(t),Z(z_1(t));Z'(t),Z'(z_1(t))\)\\
Z(0)=0,
\end{cases}
\end{equation}
for $|t|\leq \delta$ and satisfying in addition that $Z'(0)=P$.

\end{theorem}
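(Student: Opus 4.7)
The idea is to view the unknown derivative $V(t)=Z'(t)$ as the primary unknown and realize $V$ as the fixed point of a single contraction obtained by putting \emph{all} occurrences of $Z,Z',Z\circ z_1,Z'\circ z_1$ on the right of \eqref{eq:functional equations} simultaneously. Fix $K>0$ and $\delta\in(0,\varepsilon)$, both to be chosen, and let
\[
\mathcal M=\Bigl\{V\in C([-\delta,\delta],\R^n): V(0)=P,\ |v_1(t)|\leq 1,\ \|V(t)-P\|\leq K|t|,\ \operatorname{Lip}(V)\leq K\Bigr\},
\]
equipped with the sup norm $\|\cdot\|_\infty$. As a closed subset of $C([-\delta,\delta],\R^n)$, it is a complete metric space, and it is nonempty since $V\equiv P$ lies in it. For $V\in\mathcal M$, set $Z(t)=\int_0^t V(s)\,ds$, whose first component $z_1$ satisfies $|z_1(t)|\leq|t|\leq\delta$ thanks to $|v_1|\leq 1$, so $V(z_1(t))$ and $Z(z_1(t))$ make sense. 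Define
\[
(\mathcal T V)(t):=H\bigl(t;Z(t),Z(z_1(t));V(t),V(z_1(t))\bigr).
\]

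\textbf{Invariance.} By shrinking $\delta$, the five-tuple in the argument of $H$ lies in $N_{\varepsilon}(\mathcal P)$, since each entry other than $t$ is $O(\delta)$-close to the corresponding entry of $\mathcal P$. Then $(\mathcal T V)(0)=H(0;{\bf 0},{\bf 0};P,P)=P$ by \eqref{eq:system P}, and $|(\mathcal T V)_1(t)|\leq 1$ by \eqref{eq:bd on h1}. Using (i)--(iii) together with $|z_1'|\leq 1$, $\|V\|_\infty\leq\|P\|+1$, and $\operatorname{Lip}(V)\leq K$,
\[
\|(\mathcal T V)(t)-(\mathcal T V)(s)\|\leq \bigl[\Lambda+(L_0+L_1)(\|P\|+1)+(C_0+C_1)K\bigr]|t-s|,
\]
so choosing $K\geq \bigl(\Lambda+(L_0+L_1)(\|P\|+1)\bigr)/(1-C_0-C_1)$, which is possible by \eqref{eq:Contraction}, ensures $\operatorname{Lip}(\mathcal T V)\leq K$. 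The pointwise bound $\|(\mathcal T V)(t)-P\|\leq K|t|$ follows from $(\mathcal T V)(0)=P$ and the Lipschitz estimate. Hence $\mathcal T:\mathcal M\to\mathcal M$.

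\textbf{Contraction.} For $V_1,V_2\in\mathcal M$, writing $Z_i$, $z_{1,i}$ for the associated quantities, the $C^0$-bound on differences of the integrals gives $\|Z_1(t)-Z_2(t)\|\leq\delta\|V_1-V_2\|_\infty$ and, by splitting at $Z_1(z_{1,2}(t))$, $\|Z_1(z_{1,1}(t))-Z_2(z_{1,2}(t))\|\leq \delta(1+\|V_1\|_\infty)\|V_1-V_2\|_\infty$. The Lipschitz control on $V_1$ yields $\|V_1(z_{1,1}(t))-V_2(z_{1,2}(t))\|\leq (1+K\delta)\|V_1-V_2\|_\infty$. Combining these with (ii) and (iii),
\[
\|\mathcal T V_1-\mathcal T V_2\|_\infty\leq \bigl[C_0+C_1+O(\delta)\bigr]\|V_1-V_2\|_\infty,
\]
where $O(\delta)$ depends only on $L_0,L_1,C_1,K,\|P\|$. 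By \eqref{eq:Contraction}, shrinking $\delta$ further makes the bracket $<1$, so $\mathcal T$ is a contraction.

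\textbf{Conclusion.} The Banach fixed point theorem produces a unique $V^*\in\mathcal M$ with $\mathcal T V^*=V^*$. Setting $Z(t)=\int_0^t V^*(s)\,ds$ gives $Z\in C^1([-\delta,\delta],\R^n)$ with $Z(0)=0$, $Z'=V^*$, and $Z'(0)=V^*(0)=P$; the identity $V^*=\mathcal T V^*$ is exactly \eqref{eq:functional equations}, and by construction the argument of $H$ stays in $N_\varepsilon(\mathcal P)$.

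\textbf{Main obstacle.} The delicate point is the joint calibration of $K$ and $\delta$: $K$ must be large enough (determined by $\Lambda$, $L_0+L_1$, $\|P\|$, and $1-C_0-C_1$) to absorb the Lipschitz estimate in the invariance step, but once $K$ is fixed the $O(\delta)$ term in the contraction estimate grows linearly in $K$, so $\delta$ must then be chosen small relative to $K$ and $\varepsilon$. Keeping $(t;Z(t),Z(z_1(t));V(t),V(z_1(t)))$ in $N_{\varepsilon}(\mathcal P)$ throughout the iteration, so that the Lipschitz and contraction hypotheses (i)--(iii) actually apply, is what forces the smallness of $\delta$; the condition $|v_1|\leq 1$ coming from \eqref{eq:First Component} and \eqref{eq:bd on h1} is essential because it guarantees $z_1(t)\in[-\delta,\delta]$, so that self-composition does not leave the interval on which $V$ is defined.
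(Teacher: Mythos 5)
Your proof is correct and follows essentially the same route as the paper: a Banach fixed point argument on a closed, Lipschitz-calibrated set, with your $K$ playing exactly the role of the paper's $\mu$ via the same inequality $K\geq\bigl(\Lambda+(L_0+L_1)(\|P\|+1)\bigr)/(1-C_0-C_1)$, with $|p_1|\leq 1$ and \eqref{eq:bd on h1} serving to keep $z_1(t)\in[-\delta,\delta]$, and with \eqref{eq:Contraction} absorbing the derivative terms. The only differences are cosmetic: you contract on $V=Z'$ in the sup norm rather than on $Z$ in the $C^1$ norm (the paper's operator $T$ is the antiderivative of your $\mathcal T$), and your estimates for $|z_{1,1}(t)-z_{1,2}(t)|$ tacitly omit the norm-equivalence constant $C_{\|\cdot\|}$ needed when $\|\cdot\|$ is a general norm on $\R^n$, which is harmless since that term is $O(\delta)$.
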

%
%
\begin{proof}
Since $H$ is continuous, let 
\begin{equation}\label{eq:Bound on H}
\alpha=\max\{\left\|H(X)\right\|:X\in N_{\varepsilon}(\mathcal P)\}.
\end{equation}
From \eqref{eq:system P}
\begin{equation}\label{eq:bound on P}
\|P\|=\|H(\mathcal P)\|\leq \alpha.
\end{equation}
Let $\mu$ be a constant such that
\begin{equation}\label{eq:lower bound on mu}
\mu\geq\dfrac{\Lambda+(L_0+L_1)\,\alpha}{1-C_0-C_1}.
\end{equation}


For any map $Z:\R\to\R^n$, we define the vector
$$V_Z(t)=\(t;Z(t), Z\(z_1(t)\); \(Z\)'(t),\(Z\)'\(z_1(t)\)\).$$

\begin{definition}\label{def:set C(delta)}
Let $C^1[-\delta,\delta]$ denote the class of all functions $Z:[-\delta,\delta]\to \R^n$ that are $C^1$ equipped with the norm $\|Z\|_{C^1[-\delta,\delta]}=\max_{[-\delta,\delta]}\|Z(t)\|+\max_{[-\delta,\delta]}\|Z'(t)\|$.
We define the set $\mathcal C=\mathcal C(\delta)$ as follows:
$Z\in \mathcal C$ if and only if 
\begin{enumerate}
\item $Z\in C^{1}[-\delta,\delta]$,
\item $Z(0)=0$, $Z'(0)=P,$
\item $|z_1(t)|\leq |t|,$
\item $\|Z(t)-Z(\bar t)\|\leq \alpha\, |t-\bar t|,$
\item $|z_1(t)-z_1(\bar t)|\leq |t-\bar t|,$
\item $\|Z'(t)-Z'(\bar t)\|\leq \mu\, |t-\bar t|$, 
\item $V_Z(t)\in N_{\varepsilon}(\mathcal P)$, for all $|t|\leq \delta$.
\end{enumerate}
\end{definition}
Define a map $T$ on $\mathcal C$ as follows:
$$
T\,Z(t)=\int_0^t H(s;Z(s),Z(z_1(s));Z'(s),Z'(z_1(s)))\,ds.
$$
Our goal is to show that $T:\mathcal C\to \mathcal C$, for $\delta$ sufficiently small, and therefore  
from Banach's fixed point theorem, $T$ has a unique fixed point $Z\in \mathcal C$ and so $Z$ solves \eqref{eq:System}.

We will prove the theorem in a series of steps.

\begin{claim}\label{clm:Non empty}
There exists $\delta_0>0$ such that $\mathcal C(\delta)$ is non empty for $\delta\leq \delta_0$; in fact, the function $Z^0(t)=t P\in \mathcal C$.
\end{claim}

\begin{proof}
Obviously, $Z^0(0)=0$, $\(Z^0\)'=P$, and from  \eqref{eq:First Component} 
$\left|z_1^0(t)\right|=|p_1|\,|t|\leq |t|.$
Also from \eqref{eq:system P} and \eqref{eq:First Component}
$$
\|Z^0(t)-Z^0(\bar t)\|\leq\|P\|\, |t-\bar t|= \|H(\mathcal P)\|\, |t-\bar t|\leq \alpha \,|t-\bar t|,
$$ 
$$|z_1^0(t)-z_1^0(\bar t)|=|p_1|\,|t-\bar t|\leq |t-\bar t|,$$
and
$
\|\(Z^0\)'(t)-\(Z^0\)'(\bar t)\|=0\leq \mu\,|t-\bar t|.
$

It remains to show that $V_{Z^0}(t)\in  N_{\varepsilon}(\mathcal P)$.
By definition 
$
V_{Z^0}(t)=\( t;t\,P,t\,p_1\,P;P,P\),
$
and so $V_{Z^0}(t)\in N_\varepsilon \(\mathcal P\)$ if and only if
$
|t|+|t|\,\|P\|+|t|\,|p_1|\,\|P\|\leq \varepsilon,
$
which is equivalent to 
\begin{equation}\label{eq:definition of delta zero}
|t|\leq \dfrac{\varepsilon}{1+\|P\|+|p_1|\,\|P\|}:=\delta_0.
\end{equation}
%
%
%
%
%
\end{proof}

\begin{claim}\label{clm:Completeness}
$\mathcal C(\delta)$ is complete for every $\delta>0$.
\end{claim}
\begin{proof}
Let $Z^k$ be a Cauchy sequence in $\mathcal C$. Since $C^1[-\delta,\delta]$ is complete, $Z^k$ converges uniformly to a function $Z\in C^{1}[-\delta,\delta]$, and $\(Z^{k}\)'$ converges uniformly to $Z'$. Since $Z^k$ satisfy properties (1)-(7) in Definition \ref{def:set C(delta)}, then Step \ref{clm:Completeness} follows by uniform convergence.
\end{proof}

\begin{claim}\label{clm:First Component map}
If $Z\in \mathcal C$ and $W=TZ=(w_1,\cdots ,w_n)$, then
$$|w_1(t)|\leq |t|.$$
\end{claim}
\begin{proof}
From \eqref{eq:bd on h1} 
and since $V_Z(t)\in N_{\varepsilon}(\mathcal P)$ for $|t|\leq \delta$, it follows that  
$$|w_1(t)|=\left |\int_0^t h_1(V_Z(s))\, ds\right| \leq |t|.$$
\end{proof}

\begin{claim}\label{clm:Lip Continuity}
If $Z\in \mathcal C$ and $W=TZ$, then
$$\left\|W(t)-W(\bar t)\right\|\leq \alpha |t-\bar t|,$$
and
$$|w_1(t)-w_1(\bar t)|\leq |t-\bar t|$$
for every $t,\bar t\in[-\delta,\delta].$
\end{claim}

\begin{proof}
Since $V_{Z}(t)\in N_\varepsilon(\mathcal P)$, by \eqref{eq:Bound on H} for all $|t|\leq \delta$ 
\[
\left\|W(t)-W(\bar t)\right\|=\left\|\int_{\bar t}^{t} H(V_Z(s))\,ds\right\|\leq \alpha \,|t-\bar t|,
\]
and from \eqref{eq:bd on h1}, we get similarly the desired estimate for $w_1(t)-w_1(\bar t)$.
\end{proof}

\begin{claim}\label{clm:Lip Derivative for W}
If $Z\in \mathcal C(\delta)$ and $W=TZ$, then
$$\left\|W'(t)-W'(\bar t)\right\|\leq \mu |t-\bar t|,$$
for every $t,\bar t\in[-\delta,\delta]$.
\end{claim}

\begin{proof}
From the Lipschitz estimates for $H$
\begin{align*}
\left\|W'(t)-W'(\bar t)\right\|&=\left\|H(V_{Z}(t))-H(V_Z(\bar t))\right\|\\
&\leq \Lambda\, |t-\bar t|+L_0\,\|Z(t)-Z(\bar t)\|+L_1\,\|Z(z_1(t))-Z(z_1(\bar t))\|\\
&\qquad +C_0\,\|Z'(t)-Z'(\bar t)\|+C_1\,\|Z'(z_1(t))-Z'(z_1(\bar t))\|.
\end{align*}
Since $|z_1(t)|\leq |t|\leq \delta$ and $|z_1(\bar t)|\leq |\bar t|\leq \delta$, we get from the Lipschitz properties of $Z$, $z_1$, and $Z'$ 
\begin{align*}
\left\|W'(t)-W'(\bar t)\right\|&\leq \Lambda\,|t-\bar t|+L_0\,\alpha |t-\bar t|+L_1\,\alpha |z_1(t)-z_1(\bar t)|+C_0\,\mu |t-\bar t|+C_1\,\mu |z_1(t)-z_1(\bar t)|\\
&\leq \(\Lambda+(L_0+L_1)\alpha+(C_0+C_1)\mu\)\,|t-\bar t|.
\end{align*}
From \eqref{eq:lower bound on mu},
$\Lambda+(L_0+L_1)\alpha\leq  (1-C_0-C_1)\mu,$
then Step \ref{clm:Lip Derivative for W} follows.
\end{proof}

\begin{claim}\label{clm:automorphism}
For $\delta$ sufficiently small,
$W=TZ\in \mathcal C$ for each $Z\in \mathcal C$.
\end{claim}

\begin{proof}
From the previous steps, it remains to show that $V_{W}(t)\in N_{\varepsilon}(\mathcal P)$. Define
\begin{align*}
S_{Z}(t)&=|t|+\left\|Z(t)\right\|+\left\|Z(z_1(t))\right\|+\left\|Z'(t)-P\right\|+\left\|Z'(z_1(t))-P\right\|\\
S_{W}(t)&=|t|+\left\|W(t)\right\|+\left\|W(w_1(t))\right\|+\left\|W'(t)-P\right\|+\left\|W'(w_1(t))-P\right\|.
\end{align*}
Since $V_{Z}(t)\in N_{\varepsilon}(\mathcal P)$, we have $S_{Z}(t)\leq \varepsilon$. We shall prove that $S_{W}(t)\leq \varepsilon$ by choosing $\delta$ sufficiently small.
In fact, from \eqref{eq:Bound on H} for every $|t|\leq \delta$
\begin{equation}\label{eq:bound on W}
\left\|W(t)\right\|= \left\|\int_{0}^t H(V_Z(s))\,ds\right\|\leq \alpha \,|t|\leq \alpha\, \delta,
\end{equation}
and from \eqref{eq:bd on h1} 
$$
\left|w_1(t)\right|= \left|\int_0^t h_1(V_Z(s))\,ds\right|\leq |t|\leq \delta.
$$
Hence
$
\left\|W(w_1(t))\right\|\leq \alpha\, \delta.
$
We next estimate $\left\|W'(t)-P\right\|$. Using the Lipschitz properties of $H$ we write
\begin{align*}
\left\|W'(t)-P\right\|&=\left\|H(V_{Z}(t))-H(\mathcal P)\right\|\\
&\leq \Lambda|t|+L_0\left\|Z(t)\right\|+L_1\left\|Z(z_1(t))\right\|+C_0\left\|Z'(t)-P\right\|+C_1\left\|Z'(z_1(t))-P\right\|.
\end{align*}
Notice that $\left\|Z(t)\right\|=\left\|Z(t)-Z(0)\right\|\leq \alpha |t|\leq \alpha \delta$,
and since $|z_1(t)|\leq |t|\leq \delta$ then 
$\left\|Z(z_1(t))\right\|\leq \alpha \delta$.
Also
$$\left\|Z'(t)-P\right\|=\left\|Z'(t)-Z'(0)\right\|\leq \mu|t|\leq \mu\delta .$$
and 
$$\left\|Z'(z_1(t))-P\right\|\leq \mu\delta .$$
Therefore, for $|t|\leq \delta$
$$\left\|W'(t)-P\right\|\leq \left[\Lambda+\alpha(L_0+L_1)\delta+\mu(C_0+C_1)\right]\delta,$$
and since $|w_1(t)|\leq \delta $, we also get 
$$\left\|W'(w_1(t))-P\right\|\leq \left[\Lambda+\alpha(L_0+L_1)\delta+\mu(C_0+C_1)\right]\delta.$$
We conclude that
$$S_W(t)\leq \delta\,\(1+2\Lambda+2\alpha(1+L_0+L_1)+2\mu(C_0+C_1)\),
$$
so choosing $\delta\leq \dfrac{\varepsilon}{1+2\Lambda+2\alpha(1+L_0+L_1)+2\mu(C_0+C_1)}$ Step \ref{clm:automorphism} follows.
\end{proof}

It remains to show that $T$ is a contraction.
\begin{claim}\label{stp:T is contraction}
If $Z^1,Z^2\in \mathcal C(\delta)$ with $\delta$ small enough from the previous steps, then
$$\left\|TZ^1-TZ^2\right\|_{C^1[-\delta,\delta]}\leq q \,\left\|Z^1-Z^2\right\|_{C^1[-\delta,\delta]},$$
for some $q<1$.
\end{claim}

\begin{proof}
Let $W^1(t)=T Z^1(t),$ $W^2(t)=TZ^2(t)$. By the fundamental theorem of calculus we have for every $|t|\leq\delta$
\begin{align*}
\left\|W^1(t)-W^2(t)\right\| &\leq \left |\int_0^t \left\|\(W^1\)'(s)-\(W^2\)'(s)\right\|\,ds \right| \leq \delta \sup_{|t|\leq \delta} \left\|\(W^1\)'(t)-\(W^2\)'(t)\right\|\\
& \leq \delta \|W^1-W^2\|_{C^1[-\delta,\delta]},
\end{align*}
and similarly $\left\|Z^1(t)-Z^2(t)\right\| \leq \delta\, \|Z^1-Z^2\|_{C^1[-\delta,\delta]}$.
From the Lipschitz properties of $H$, for every $|t|\leq \delta$,
{
\begin{align*}
\left\|\(W^1\)'(t)-\(W^2\)'(t)\right\|&\leq \left\|H\(V_{Z^1}(t)\)-H\(V_{Z^2}(t)\)\right\|\\
&\leq L_0 \left\|Z^1(t)-Z^2(t)\right\|+L_1\left\|Z^1\(z_1^1(t)\)-Z^2\(z_1^2(t)\)\right\|\\
&\quad+C_0\left\|\(Z^1\)'(t)-\(Z^2\)'(t)\right\|+C_1\left\|\(Z^1\)'\(z_1^1(t)\)-\(Z^2\)'\(z_1^2(t)\)\right\|.
\end{align*}
}
We have
\begin{align*}
\left\|Z^1(t)-Z^2(t)\right\|
&\leq \delta\,\|Z^1-Z^2\|_{C^1[-\delta,\delta]}\\
\left\|Z^1(z_1^1(t))-Z^2(z_1^2(t))\right\|
&\leq \left\|Z^1(z_1^1(t))-Z^2(z_1^1(t))\right\|+\left\|Z^2(z_1^1(t))-Z^2(z_1^2(t))\right\|\\
&\leq \delta\,\|Z^1-Z^2\|_{C^1[-\delta,\delta]}+\alpha\, |z_1^1(t)-z_1^2(t)|\\
&\leq \delta\,\|Z^1-Z^2\|_{C^1[-\delta,\delta]}+\alpha C_{\|\cdot\|} \|Z^1(t)-Z^2(t)\|\\
&\leq \delta\,(\alpha C_{\|\cdot\|}+1)\,\|Z^1-Z^2\|_{C^1[\delta,\delta]}\\
\left\|\(Z^1\)'(t)-\(Z^2\)'(t)\right\|&\leq \|Z^1-Z^2\|_{C^1[-\delta,\delta]}\\
\left\|\(Z^1\)'\(z_1^1(t)\)-\(Z^2\)'\(z_1^2(t)\)\right\|&\leq \left\|\(Z^1\)'\(z_1^1(t)\)-\(Z^2\)'\(z_1^1(t)\)\right\|+\left\|\(Z^2\)'\(z_1^1(t)\)-\(Z^2\)'\(z_1^2(t)\)\right\|\\
&\leq \left\|Z^1-Z^2\right\|_{C^1[-\delta,\delta]}+\mu |z_1^1(t)-z_1^2(t)|\\
&\leq \left\|Z^1-Z^2\right\|_{C^1[-\delta,\delta]}+\mu C_{\|\cdot\|} \left\|Z^1(t)-Z^2(t)\right\|\\
&\leq \(\mu \, C_{\|\cdot\|} \,\delta+1\)\left\|Z^1-Z^2\right\|_{C^1[-\delta,\delta]};
\end{align*}
here $C_{\|\cdot\|}$ is a constant larger than $1$, depending only on the choice of the norm in $\R^n$, since all norms in $\R^n$ are equivalent such constant exists.
Combining the above inequalities, we obtain
\begin{align*}
&\left\|\(W^1\)'(t)-\(W^2\)'(t)\right\|\\
&\leq \left(L_0\,\delta+L_1\,\delta\,(\alpha C_{\|\cdot\|}+1)+C_0+C_1\(\mu C_{\|\cdot\|}\,\delta+1\)\right) \,\left\|Z^1-Z^1\right\|_{C^1}:=\(M\,\delta+C_0+C_1\)\left\|Z^1-Z^2\right\|_{C^1},
\end{align*}
and from the fundamental theorem of calculus
\[
\left\|W^1(t)-W^2(t)\right\|\leq \delta  \sup_{|t|\leq \delta} \left\|\(W^1\)'(t)-\(W^2\)'(t)\right\|\leq \delta \,\(M\,\delta+C_0+C_1\)\left\|Z^1-Z^2\right\|_{C^1}.
\]
We conclude that 
$$
\left\|W^1-W^2\right\|_{C^1[-\delta,\delta]}\leq \(1+\delta\)\,\left(M\,\delta+C_0+C_1\right)\,\left\|Z^1-Z^2\right\|_{C^1[-\delta,\delta]}
$$
Since $C_0+C_1<1$, then choosing $\delta$ sufficiently small Step \ref{stp:T is contraction} follows.
\end{proof}
We conclude that there exists $\delta^*>0$ small such that for $0<\delta\leq \delta^*$, the map $T:\mathcal C(\delta)\to \mathcal C(\delta)$ is a contraction and hence by the Banach fixed point theorem there is a unique $Z\in \mathcal C(\delta)$ such that 
$$
Z(t)=TZ(t)=\int_0^t H(V_Z(s))\,ds.
$$
Differentiating with respect to $t$, we get that $Z$ solves \eqref{eq:functional equations} for $|t|\leq \delta$.
\end{proof}

We make the following observations about the assumptions in Theorem \ref{thm: Existence}.
\begin{remark}\label{rmk:non existence}\rm
We show that even for $H$ smooth, satisfying \eqref{eq:system P} with \eqref{eq:First Component}, and \eqref{eq:bd on h1}, the system \eqref{eq:System} might not have any real solutions in a neighborhood of $t=0$.
%
%
In fact, consider for example the following ode:
\begin{equation}\label{eq:Counterexample System}
z'(t)= z'(t)^2+\dfrac{1-t}{4},\qquad z(0)=0.
\end{equation}
In this case $n=1$ and $H:\R^5\to \R$ with
$$H\(t;\zeta^0,\zeta^1;\xi^0,\xi^1\)=\(\xi^0\)^2+\dfrac{1-t}{4}$$
analytic.
The system $P=H(0;0,0;P,P)$ has a unique solution $P=1/2$, and so \eqref{eq:system P} and \eqref{eq:First Component} hold. 
Let $\mathcal P=(0;0,0;1/2,1/2)$. Since $H(\mathcal P)<1$, \eqref{eq:bd on h1} holds in a small neighborhood of $\mathcal P$.
On the other hand, \eqref{eq:Counterexample System} cannot have real solutions for $t<0$ and so in any neighborhood of $t=0$.
This shows that there cannot exist a norm $\|\cdot \|$ in $\R$ so that the contraction condition \eqref{eq:Contraction} is satisfied.
In particular, this shows that the conclusion of \cite[Lemma 2.2]{vanbrunt-ockendon:lenstwowavelengths} is in error.
\end{remark}

\begin{remark}\label{rmk:More C0+C1}\rm Let $H$ be a map from a domain in $\R^{4n+1}$ with values in  $\R^n$, and let $P$ be a solution to the system $P=H(0;{\bf 0,0};P,P)$ satisfying \eqref{eq:First Component}. Assume $H$ is $C^1$ in a neighborhood of $\mathcal P=(0;{\bf 0,0};P,P)$. Given a norm $\|\cdot\|$ in $\R^n$, let $\left\|\left|\cdot\right|\right\|$ be the induced norm on the space of $n\times n$ matrices, i.e., for a $n\times n$ matrix $A$ 
$$\left\|\left|A\right|\right\|=\max\{\|Av\|:v\in \R^n, \|v\|=1\},$$
see \cite[Section 5.6]{horn-johnson-matrix-analysis}.
Since $H$ is $C^1$ then there exists a neighborhood $N_{\varepsilon}(\mathcal P)$ as defined in Theorem \ref{thm: Existence} such that \eqref{eq:Lip in t}, \eqref{eq:Lip in zeta0,zeta1}, \eqref{eq:Lip in xi0,xi1} are satisfied.

The following proposition shows estimates for $C_0+C_1$ in \eqref{eq:Lip in xi0,xi1}. 
\begin{proposition}\label{eq:prop Bound on C0+C1}
We define the $n\times n$ matrices $$\nabla_{\xi^0}H=\left(\dfrac{\partial h_i}{\partial \xi^0_j}\right)_{1\leq i,j\leq n},\quad \nabla_{\xi^1}H=\left(\dfrac{\partial h_i}{\partial \xi^1_j}\right)_{1\leq i,j\leq n}.$$ 
If \eqref{eq:Lip in xi0,xi1} holds for some $C_0, C_1$, then 
\begin{equation}\label{eq:lower bound on C0+C1}
C_0\geq \left\|\left|\nabla_{\xi^0}H(\mathcal P)\right|\right\|,\text{ and } C_1\geq\left\|\left|\nabla_{\xi^0}H(\mathcal P)\right|\right\|.
\end{equation}
In addition, \eqref{eq:Lip in xi0,xi1} holds with $C_0=\max_{N_{\varepsilon}(\mathcal P)}\left|\left\|\nabla_{\xi^0} H\right\|\right|$ and $C_1=\max_{N_{\varepsilon}(\mathcal P)}\left|\left\|\nabla_{\xi^1} H\right\|\right|$.
\end{proposition}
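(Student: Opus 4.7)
The plan is to prove the two parts of Proposition \ref{eq:prop Bound on C0+C1} separately: the lower bounds \eqref{eq:lower bound on C0+C1} follow from passing to the limit in the Lipschitz inequality, while the upper bound assertion follows from the fundamental theorem of calculus applied along line segments, using that $N_{\varepsilon}(\mathcal{P})$ is convex.

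For the lower bound on $C_0$, I would fix an arbitrary unit vector $v\in\R^n$ (with respect to $\|\cdot\|$) and, for small $\tau>0$, apply \eqref{eq:Lip in xi0,xi1} at the point $\mathcal{P}$ with $\bar\xi^0=P+\tau v$, $\xi^0=P$, and $\bar\xi^1=\xi^1=P$. This gives
\[
\left\|\frac{H(0;{\bf 0,0};P+\tau v,P)-H(0;{\bf 0,0};P,P)}{\tau}\right\|\leq C_0\,\|v\|=C_0.
\]
Letting $\tau\to 0^+$ and using that $H$ is $C^1$ yields $\|\nabla_{\xi^0}H(\mathcal{P})\,v\|\leq C_0$. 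Taking the supremum over unit $v$ gives $\left\|\left|\nabla_{\xi^0}H(\mathcal{P})\right|\right\|\leq C_0$. The argument for $C_1$ is identical, with the roles of $\xi^0$ and $\xi^1$ reversed (and this makes clear that the statement \eqref{eq:lower bound on C0+C1} contains a typographical slip in the second inequality, where $\xi^0$ should read $\xi^1$).

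For the upper bound, I would first observe that $N_{\varepsilon}(\mathcal{P})$ is convex, since it is a sub-level set of the convex function $(t;\zeta^0,\zeta^1;\xi^0,\xi^1)\mapsto |t|+\|\zeta^0\|+\|\zeta^1\|+\|\xi^0-P\|+\|\xi^1-P\|$. Consequently, given two points $(t;\zeta^0,\zeta^1;\bar\xi^0,\bar\xi^1)$ and $(t;\zeta^0,\zeta^1;\xi^0,\xi^1)$ in $N_{\varepsilon}(\mathcal{P})$, the straight-line path in the $(\xi^0,\xi^1)$ variables remains in $N_{\varepsilon}(\mathcal{P})$. I would split the difference into two one-variable increments,
\begin{align*}
H(t;\zeta^0,\zeta^1;\bar\xi^0,\bar\xi^1)-H(t;\zeta^0,\zeta^1;\xi^0,\xi^1)
&=\int_0^1\nabla_{\xi^0}H\bigl(t;\zeta^0,\zeta^1;\xi^0+s(\bar\xi^0-\xi^0),\bar\xi^1\bigr)(\bar\xi^0-\xi^0)\,ds\\
&\quad+\int_0^1\nabla_{\xi^1}H\bigl(t;\zeta^0,\zeta^1;\xi^0,\xi^1+s(\bar\xi^1-\xi^1)\bigr)(\bar\xi^1-\xi^1)\,ds,
\end{align*}
and then bound each integrand by the definition of the induced matrix norm, namely $\|Av\|\leq \left\|\left|A\right|\right\|\,\|v\|$. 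Taking norms under the integral and using that the arguments lie in $N_{\varepsilon}(\mathcal{P})$ gives
\[
\left\|H(t;\zeta^0,\zeta^1;\bar\xi^0,\bar\xi^1)-H(t;\zeta^0,\zeta^1;\xi^0,\xi^1)\right\|\leq C_0\|\bar\xi^0-\xi^0\|+C_1\|\bar\xi^1-\xi^1\|
\]
with the asserted choices of $C_0$ and $C_1$.

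No step is particularly hard; the only mild subtlety is verifying the convexity of $N_{\varepsilon}(\mathcal{P})$ so that the segment lies in the region where the bounds on $\nabla_{\xi^0}H,\nabla_{\xi^1}H$ apply, and noting that the maxima in the definitions of $C_0,C_1$ are finite because $H$ is $C^1$ on the compact set $N_{\varepsilon}(\mathcal{P})$.
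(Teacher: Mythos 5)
Your lower-bound argument is exactly the paper's: perturb only the $\xi^0$ slot at $\mathcal P$, divide by $\tau$, let $\tau\to 0^+$, and take the supremum over unit vectors $v$; the same for $\xi^1$. That part is correct, and you are right that the second inequality in \eqref{eq:lower bound on C0+C1} contains a typo ($\nabla_{\xi^0}$ should read $\nabla_{\xi^1}$).

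The upper-bound part has a genuine gap. You correctly observe that $N_{\varepsilon}(\mathcal P)$ is convex and that the straight segment joining the two given points stays inside it — but your decomposition does not follow that segment. You use an L-shaped path through the corner $(t;\zeta^0,\zeta^1;\xi^0,\bar\xi^1)$: the first integral is evaluated along $(\xi^0+s(\bar\xi^0-\xi^0),\bar\xi^1)$ and the second along $(\xi^0,\xi^1+s(\bar\xi^1-\xi^1))$, and both of these segments contain the corner point. That corner need not lie in $N_{\varepsilon}(\mathcal P)$: for instance if $|t|+\|\zeta^0\|+\|\zeta^1\|=\varepsilon/2$, $\|\bar\xi^0-P\|=\|\xi^1-P\|=0$ and $\|\xi^0-P\|=\|\bar\xi^1-P\|=\varepsilon/2$, then both endpoints satisfy the defining inequality with equality, while the corner gives $\varepsilon/2+\varepsilon/2+\varepsilon/2>\varepsilon$. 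On the portion of the path outside $N_{\varepsilon}(\mathcal P)$ you cannot bound the integrands by $\max_{N_{\varepsilon}(\mathcal P)}\left|\left\|\nabla_{\xi^0}H\right\|\right|$ and $\max_{N_{\varepsilon}(\mathcal P)}\left|\left\|\nabla_{\xi^1}H\right\|\right|$ (indeed $H$ is only guaranteed to be defined and $C^1$ near $\mathcal P$). The fix is what the paper does: apply the fundamental theorem of calculus along the single diagonal segment $(1-s)(t;\zeta^0,\zeta^1;\xi^0,\xi^1)+s(t;\zeta^0,\zeta^1;\bar\xi^0,\bar\xi^1)$, whose derivative in $s$ is $\nabla_{\xi^0}H(\cdot)(\bar\xi^0-\xi^0)^t+\nabla_{\xi^1}H(\cdot)(\bar\xi^1-\xi^1)^t$; convexity then guarantees every evaluation point lies in $N_{\varepsilon}(\mathcal P)$, and the same two estimates close the argument.
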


\begin{proof}
We first prove \eqref{eq:lower bound on C0+C1}. 
Let $v\in \R^n$ with $\|v\|=1$; for $s>0$ small, the vector $(0;{\bf 0,0};P+s\,v,P)\in N_{\varepsilon}(\mathcal P)$. From \eqref{eq:Lip in xi0,xi1}, we get
$$
\left\|H(0;{\bf 0,0};P+s\,v,P)-H(0;{\bf 0,0};P,P)\right\|\leq C_0\, |s|.
$$
Dividing by $|s|$ and letting $s\to 0$ we obtain, by application of the mean value theorem on each component, that for every $v\in \R^n$ with $\|v\|=1$
$$C_0\geq \left\| \nabla_{\xi^0} H(\mathcal P)\, v^t\right\|.$$
Taking the supremum over all $v\in \R^n$ we obtain
$C_0\geq \left\|\left|\nabla_{\xi^0}H(\mathcal P)\right|\right\|.$
Similarly we get $C_1\geq \left\|\left|\nabla_{\xi^1}H(\mathcal P)\right|\right\|$.

To show the second part of the proposition,
%
%
by the fundamental theorem of calculus,
we have for $\(t;\zeta^0,\zeta^1;\xi^0,\xi^1\),\(t;\zeta^0,\zeta^1;\bar\xi^0,\bar\xi^1\)\in N_{\varepsilon}(\mathcal P)$ that
\begin{align*}
&H\(t;\zeta^0,\zeta^1;\xi^0,\xi^1\)-H\(t;\zeta^0,\zeta^1;\bar\xi^0,\bar\xi^1\)\\
&=\int_{0}^1 DH\left( (1-s) \(t;\zeta^0,\zeta^1;\xi^0,\xi^1\)+s \(t;\zeta^0,\zeta^1;\bar\xi^0,\bar\xi^1\)\right)\, \(0;\bf{0,0};\xi^0-\bar\xi^0,\xi^1-\bar \xi^1\)^t\,ds\\
&= \int_{0}^1 \nabla_{\xi^0}H\left( (1-s) \(t;\zeta^0,\zeta^1;\xi^0,\xi^1\)+s \(t;\zeta^0,\zeta^1;\bar\xi^0,\bar\xi^1\)\right)\, \(\xi^0-\bar \xi^0\)^t\,ds\\
&\qquad + \int_{0}^1 \nabla_{\xi^1}H\left( (1-s) \(t;\zeta^0,\zeta^1;\xi^0,\xi^1\)+s \(t;\zeta^0,\zeta^1;\bar\xi^0,\bar\xi^1\)\right)\, \(\xi^1-\bar \xi^1\)^t\,ds,
\end{align*}
where $DH$ is the $(4n+1)\times n$ matrix of the first derivatives of $H$ with respect to all variables.
Then
\begin{align*}
&\left\|H\(t;\zeta^0,\zeta^1;\xi^0,\xi^1\)-H\(t;\zeta^0,\zeta^1;\bar\xi^0,\bar\xi^1\)\right\|\\
&\leq \int_{0}^{1} \left\|\left|\nabla_{\xi^0}H\left( (1-s) \(t;\zeta^0,\zeta^1;\xi^0,\xi^1\)+s \(t;\zeta^0,\zeta^1;\bar\xi^0,\bar\xi^1\)\right)\right\|\right|\, \left\|\xi^0-\bar \xi^0\right\|\, ds  \\
&\qquad +\int_{0}^{1} \left\|\left|\nabla_{\xi^1}H\left( (1-s) \(t;\zeta^0,\zeta^1;\xi^0,\xi^1\)+s \(t;\zeta^0,\zeta^1;\bar\xi^0,\bar\xi^1\)\right)\right\|\right|\,\left\|\xi^1-\bar \xi^1\right\|\, ds \\
&\leq \max_{N_{\varepsilon}(\mathcal P)}\left|\left\|\nabla_{\xi^0} H\right\|\right|\,\,  \left\|\bar \xi^0-\xi^0\right\|+
\max_{N_{\varepsilon}(\mathcal P)}\left|\left\|\nabla_{\xi^1} H\right\|\right|\,\,  \left\|\bar \xi^1-\xi^1\right\|.
\end{align*}
The proof is then complete.
\end{proof}

Given an $n\times n$ matrix $A$, let $R_A$ be its spectral radius, i.e., $R_A$ is the largest  absolute value of the eigenvalues of $A$. From \cite[Theorem 5.6.9]{horn-johnson-matrix-analysis} we have $R_A\leq \left\|\left|A\right|\right\|$ for any matrix norm $\left\|\left|\cdot \right|\right\|$. Then from \eqref{eq:lower bound on C0+C1} we get the following corollary that shows that the possibility of choosing a norm in $\R^n$ for which the contraction property \eqref{eq:Contraction} holds depends on the spectral radii of the matrices $\nabla_{\xi^0} H(\mathcal P),  \nabla_{\xi^1} H(\mathcal P)$.

\begin{corollary}\label{cor:spectral radii}
Let $H$ be as above. Denote by $R_{\xi^0}$ and  $R_{\xi^1}$ the spectral radii of the matrices  $\nabla_{\xi^0} H(\mathcal P),$ and $\nabla_{\xi^1} H(\mathcal P)$ respectively. Then for any norm in $\R^n$ and any $C_0,C_1$ satisfying \eqref{eq:Lip in xi0,xi1} we have
$$C_0+C_1\geq R_{\xi^0}+R_{\xi^1}.$$
\end{corollary}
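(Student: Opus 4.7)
The plan is to combine the lower bound given in Proposition \ref{eq:prop Bound on C0+C1} with the general inequality relating spectral radius to matrix norms. Specifically, from \eqref{eq:lower bound on C0+C1} any admissible pair $(C_0,C_1)$ in the contraction estimate \eqref{eq:Lip in xi0,xi1} must satisfy
\[
C_0 \geq \bigl\|\bigl|\nabla_{\xi^0} H(\mathcal P)\bigr|\bigr\|, \qquad C_1 \geq \bigl\|\bigl|\nabla_{\xi^1} H(\mathcal P)\bigr|\bigr\|,
\]
where $\|\!|\cdot|\!\|$ is the matrix norm induced by the given norm $\|\cdot\|$ on $\R^n$.

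Next, I would invoke \cite[Theorem 5.6.9]{horn-johnson-matrix-analysis}, which asserts that for every square matrix $A$ and every matrix norm $\|\!|\cdot|\!\|$, the spectral radius satisfies $R_A \leq \|\!|A|\!\|$. Applying this to $A = \nabla_{\xi^0} H(\mathcal P)$ and $A = \nabla_{\xi^1} H(\mathcal P)$ gives
\[
\bigl\|\bigl|\nabla_{\xi^0} H(\mathcal P)\bigr|\bigr\| \geq R_{\xi^0}, \qquad \bigl\|\bigl|\nabla_{\xi^1} H(\mathcal P)\bigr|\bigr\| \geq R_{\xi^1}.
\]

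Chaining the two sets of inequalities yields $C_0 \geq R_{\xi^0}$ and $C_1 \geq R_{\xi^1}$, and summing produces the desired bound $C_0+C_1 \geq R_{\xi^0} + R_{\xi^1}$. There is no real obstacle here since the corollary is an immediate consequence of Proposition \ref{eq:prop Bound on C0+C1} together with the standard spectral radius inequality; the only thing to be careful about is that the matrix norm used on the right-hand side of \eqref{eq:lower bound on C0+C1} is exactly the one induced by the chosen vector norm on $\R^n$, so the Horn-Johnson inequality applies verbatim.
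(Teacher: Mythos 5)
Your proof is correct and is exactly the paper's argument: the corollary is stated immediately after the paper recalls that $R_A\leq \left\|\left|A\right|\right\|$ for any induced matrix norm (citing Horn--Johnson, Theorem 5.6.9) and combines this with the lower bounds of Proposition \ref{eq:prop Bound on C0+C1}. Note only that the displayed inequality \eqref{eq:lower bound on C0+C1} in the paper contains a typo ($\nabla_{\xi^0}$ appearing twice); you correctly use the intended bound $C_1\geq \left\|\left|\nabla_{\xi^1}H(\mathcal P)\right|\right\|$ established in the proposition's proof.
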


To apply Theorem \ref{thm: Existence}, we need $H$ to satisfy \eqref{eq:Lip in xi0,xi1} together with the contraction condition \eqref{eq:Contraction} for some norm $\|\cdot \|$ in $\R^n$.
It might not be possible to find such a norm. In fact, if an $n\times n$ matrix $A$ has spectral radius $R_A\geq 1$, then for each norm $\|\cdot \|$ in $\R^n$ the induced matrix norm satisfies $\||A|\|\geq 1$; see [Theorem 5.6.9 and Lemma 5.6.10]\cite{horn-johnson-matrix-analysis}.
So if the sum of the spectral radii of the Jacobian matrices $\nabla_{\xi^0}H(\mathcal P),$ $\nabla_{\xi^1}H(\mathcal P)$ is bigger than one, then from Corollary \ref{cor:spectral radii} it is not possible to find a norm $\|\cdot \|$ in $\R^n$ for which \eqref{eq:Contraction} holds.



\end{remark}

\subsection{Uniqueness of solutions} In this section we show the following uniqueness theorem.
\begin{theorem}\label{thm:Uniqueness}
Under the assumptions of Theorem \ref{thm: Existence}, the local solution to \eqref{eq:functional equations} with $Z'(0)=P$ is unique.
\end{theorem}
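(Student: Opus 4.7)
The plan is to show that any local solution $Z$ of \eqref{eq:functional equations} with $Z'(0)=P$ automatically belongs to the metric space $\mathcal C(\delta)$ of Definition \ref{def:set C(delta)} for all sufficiently small $\delta>0$, and then invoke uniqueness of the Banach fixed point of the contraction $T$ constructed in Step \ref{stp:T is contraction}. Indeed, any such $Z$ satisfies
$Z(t)=\int_0^t Z'(s)\,ds=\int_0^t H(V_Z(s))\,ds=TZ(t)$, so it is a fixed point of $T$.

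Let $Z$ be a local $C^1$ solution with $Z(0)=0$, $Z'(0)=P$. Conditions (1) and (2) of Definition \ref{def:set C(delta)} hold by hypothesis. Since $V_Z(0)=\mathcal P$ and $V_Z$ is continuous, there exists $\delta_0>0$ such that $V_Z(t)\in N_\varepsilon(\mathcal P)$ for $|t|\leq\delta_0$, giving condition (7). Condition (4) then follows from $\|Z'(t)\|=\|H(V_Z(t))\|\leq\alpha$ and integration, and (3), (5) follow the same way from $|h_1|\leq 1$ (see \eqref{eq:bd on h1}).

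The delicate point is condition (6): the Lipschitz bound $\|Z'(t)-Z'(\bar t)\|\leq \mu|t-\bar t|$. I would establish this without circularity by working with the modulus of continuity. Define, for $h\geq 0$,
\[
\omega(h)=\sup\left\{\|Z'(t)-Z'(\bar t)\|:\ t,\bar t\in[-\delta_0,\delta_0],\ |t-\bar t|\leq h\right\}.
\]
Because $Z'$ is continuous on the compact interval $[-\delta_0,\delta_0]$, $\omega$ is finite and $\omega(h)\to 0$ as $h\to 0$. Using $Z'(t)=H(V_Z(t))$ together with the Lipschitz estimates \eqref{eq:Lip in t}--\eqref{eq:Lip in xi0,xi1}, condition (4) already proved, and $|z_1(t)-z_1(\bar t)|\leq|t-\bar t|$ from (5), for $|t-\bar t|\leq h$ we obtain
\[
\|Z'(t)-Z'(\bar t)\|\leq \Lambda h+(L_0+L_1)\alpha h+(C_0+C_1)\omega(h),
\]
and taking supremum over such $t,\bar t$ yields $\omega(h)\leq(\Lambda+(L_0+L_1)\alpha)h+(C_0+C_1)\omega(h)$. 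By \eqref{eq:Contraction}, $1-C_0-C_1>0$, so
\[
\omega(h)\leq\frac{\Lambda+(L_0+L_1)\alpha}{1-C_0-C_1}\,h\leq\mu h,
\]
by the choice of $\mu$ in \eqref{eq:lower bound on mu}. This is exactly condition (6).

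Now let $Z^1,Z^2$ be two local solutions with $(Z^i)'(0)=P$. Applying the above to each and choosing $\delta\leq\min(\delta_0^{(1)},\delta_0^{(2)},\delta^*)$, where $\delta^*$ is the contraction radius from Step \ref{stp:T is contraction}, both $Z^1$ and $Z^2$ lie in $\mathcal C(\delta)$, which is complete by Step \ref{clm:Completeness}. Since each is a fixed point of the contraction $T:\mathcal C(\delta)\to\mathcal C(\delta)$, Banach's fixed point theorem forces $Z^1\equiv Z^2$ on $[-\delta,\delta]$. A standard continuation argument (repeatedly apply the local uniqueness on overlapping intervals starting from $0$) then extends the equality to the full common interval of existence. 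The only real obstacle in the argument is the Lipschitz bound on $Z'$; I expect everything else to be direct verification.
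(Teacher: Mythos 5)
Your proposal is correct and follows essentially the same route as the paper: the paper's Lemma \ref{lm:uniqueness lemma} shows any solution with $Z'(0)=P$ and trajectory in $N_\varepsilon(\mathcal P)$ lies in $\mathcal C(\delta)$, with the key Lipschitz bound on $Z'$ obtained by exactly your supremum/modulus-of-continuity self-improvement using $C_0+C_1<1$, after which Banach's fixed point uniqueness closes the argument. The only (harmless) extras in your write-up are the observation that $V_Z(t)\in N_\varepsilon(\mathcal P)$ is automatic for small $t$ by continuity, and the final continuation remark, which is not needed for the purely local statement being proved.
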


The theorem is a consequence of the following lemma.
\begin{lemma}\label{lm:uniqueness lemma}
Let $\mathcal C(\delta)$ be the set in Definition \ref{def:set C(delta)}.
If there exists $\delta>0$ such that $W$ solves \eqref{eq:functional equations} for $|t|\leq \delta$,  with $W'(0)=P$, and $V_W(t)\in N_{\varepsilon}(\mathcal P)$ for $|t|\leq \delta$, then $W\in \mathcal C(\delta)$. 
\end{lemma}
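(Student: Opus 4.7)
The plan is to verify, one by one, the seven defining properties of $\mathcal C(\delta)$ for the given $W$; properties (2) and (7) are essentially handed to us by hypothesis, so the real work is deriving the Lipschitz estimates (3)--(6) from the functional equation $W'(t)=H(V_W(t))$ together with the uniform bounds on $H$ available over $N_\varepsilon(\mathcal P)$.

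First, the easy items. Because $V_W$ is continuous in $t$ and $H$ is continuous on $N_\varepsilon(\mathcal P)$, the identity $W'(t)=H(V_W(t))$ shows $W'$ is continuous, so $W\in C^1[-\delta,\delta]$, which is (1). The initial condition in \eqref{eq:functional equations} gives $W(0)=0$, and $W'(0)=P$ is part of the hypothesis, so (2) holds. For (3), (4), (5), I would integrate the pointwise bounds supplied by \eqref{eq:bd on h1} and the definition \eqref{eq:Bound on H} of $\alpha$: since $V_W(s)\in N_\varepsilon(\mathcal P)$ we have $|w_1'(s)|=|h_1(V_W(s))|\leq 1$ and $\|W'(s)\|\leq\alpha$ for $|s|\leq\delta$, so the fundamental theorem of calculus delivers $|w_1(t)|\leq |t|$, $\|W(t)-W(\bar t)\|\leq \alpha|t-\bar t|$, and $|w_1(t)-w_1(\bar t)|\leq |t-\bar t|$ at once.

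The main obstacle is (6), the Lipschitz bound $\|W'(t)-W'(\bar t)\|\leq \mu|t-\bar t|$. A direct application of the Lipschitz hypotheses \eqref{eq:Lip in t}--\eqref{eq:Lip in xi0,xi1} to $W'(t)-W'(\bar t)=H(V_W(t))-H(V_W(\bar t))$ is self-referential, because the $\xi^0,\xi^1$ slots of $V_W$ carry $W'(t)$ and $W'(w_1(t))$ and we do not yet know $W'$ is Lipschitz. I would circumvent this by working with the modulus of continuity
\[
\omega(h)=\sup\{\|W'(t)-W'(\bar t)\|:t,\bar t\in [-\delta,\delta],\ |t-\bar t|\leq h\},
\]
which is finite for every $h\geq 0$ since $W'$ is uniformly continuous on the compact interval $[-\delta,\delta]$. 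Combining the $H$-Lipschitz estimates with the already-established $\alpha$-Lipschitz bound on $W$ and the $1$-Lipschitz bound on $w_1$, and using that $|w_1(t)-w_1(\bar t)|\leq h$ forces $\|W'(w_1(t))-W'(w_1(\bar t))\|\leq \omega(h)$, yields
\[
\omega(h)\leq \bigl(\Lambda+(L_0+L_1)\alpha\bigr)\,h+(C_0+C_1)\,\omega(h).
\]
The contraction condition \eqref{eq:Contraction} lets me absorb $(C_0+C_1)\omega(h)$ on the left, and the choice \eqref{eq:lower bound on mu} of $\mu$ then produces $\omega(h)\leq \mu h$, which is exactly (6). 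Condition (7) is part of the hypothesis, so the verification that $W\in\mathcal C(\delta)$ is complete.
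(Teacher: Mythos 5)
Your proposal is correct and follows essentially the same route as the paper: the easy properties come from integrating the pointwise bounds \eqref{eq:bd on h1} and \eqref{eq:Bound on H}, and the Lipschitz bound on $W'$ is obtained by taking a supremum over pairs at distance at most $r$ and absorbing the $(C_0+C_1)$-term using \eqref{eq:Contraction} and \eqref{eq:lower bound on mu} — which is exactly your modulus-of-continuity argument written out.
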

%

\begin{proof}
Since $w_1'(t)=h_1(V_{W}(t))$, then \eqref{eq:bd on h1} implies
\begin{equation}\label{eq:first component W}
|w_1(t)|=\left|\int_{0}^t h_1(V_W(s))\,ds\right|\leq |t|.
\end{equation}
Also for $|t|,|\bar t|\leq \delta$ since $W$ is a solution to \eqref{eq:functional equations} then from \eqref{eq:Bound on H}
\begin{equation}\label{eq:lip W}
\left\|W(t)-W(\bar t)\right\|=\left\|\int_{\bar t}^t H(V_W(s))\, ds\right\|\leq \alpha |t-\bar t|
\end{equation}
and by \eqref{eq:bd on h1} 
\begin{equation}\label{eq:Lip w1}
\left|w_1(t)-w_1(\bar t)\right|=\left|\int_{\bar t}^t h_1(V_W(s))\,ds\right|\leq |t-\bar t|.
\end{equation}

It remains to show the Lipschitz estimate on $W'$. Let $|t|,|\bar t|\leq \delta$, then by the Lipschitz properties of $H$
\begin{align*}
\left\|W'(t)-W'(\bar t)\right\|&= \left\|H(V_{W}(t))-H(V_{W}(\bar t))\right\|\\
&\leq \Lambda |t-\bar t|+L_0 \left\|W(t)-W(\bar t)\right\|+L_1\left\|W(w_1(t))-W(w_1(\bar t))\right\|\\
&\qquad +C_0 \left\|W'(t)-W'(\bar t)\right\|+C_1\left\|W'(w_1(t))-W'(w_1(\bar t))\right\|.
\end{align*}
Using \eqref{eq:lip W}, \eqref{eq:first component W}, and \eqref{eq:Lip w1}, we get that for every $|t|,|\bar t|\leq \delta$
\begin{equation}\label{eq:Estimate}
\left\|W'(t)-W'(\bar t)\right\|\leq (\Lambda+(L_0+L_1)\alpha)|t-\bar t|+C_0 \left\|W'(t)-W'(\bar t)\right\|+C_1\left\|W'(w_1(t))-W'(w_1(\bar t))\right\|.
\end{equation}
Fix $t$ and $\bar t$ and let $r=|t-\bar t|$. Let $\tau$, and $\bar \tau$ be such that $|\tau|,|\bar \tau|\leq \delta$ and $|\tau-\bar \tau|\leq r$, then by \eqref{eq:Lip w1}
$$\left|w_1(\tau)-w_1(\bar \tau)\right|\leq |\tau-\bar \tau|\leq r.$$
Hence applying \eqref{eq:Estimate} for $\tau$ and $\bar \tau$ we get for $|\tau|,|\bar \tau|\leq \delta$, $|\tau-\bar \tau|\leq r$ that
\begin{align*}
\left\|W'(\tau)-W'(\bar \tau)\right\|
&\leq (\Lambda+\(L_0+L_1\)\alpha)|\tau-\bar \tau|+C_0 \left\|W'(\tau)-W'(\bar \tau)\right\|+C_1\left\|W'(w_1(\tau))-W'(w_1(\bar \tau))\right\|\\
&\leq (\Lambda+\(L_0+L_1\)\alpha)\,r+(C_0+C_1)\sup_{|\tau|,|\bar \tau|\leq \delta, |\tau-\bar \tau|\leq r} \left\|W'(\tau)-W'(\bar \tau)\right\|.
\end{align*}
Hence taking the supremum on the left hand side of the inequality, and using \eqref{eq:lower bound on mu} we get
$$
\sup_{|\tau|,|\bar \tau|\leq \delta, |\tau-\bar \tau|\leq r} \left\|W'(\tau)-W'(\bar \tau)\right\| \leq \dfrac{\Lambda+\(L_0+L_1\)\,\alpha}{1-C_0-C_1} \,r\leq \mu \,r
$$
so for every $|t|,|\bar t|\leq \delta$
$$
\left\|W'(t)-W'(\bar t)\right\|\leq \sup_{|\tau|,|\bar \tau|\leq \delta, |\tau-\bar \tau|\leq |t-\bar t|} \left\|W'(\tau)-W'(\bar \tau)\right\| \leq \mu\, |t-\bar t|,
$$
and the lemma follows.
\end{proof}
\begin{proof}[Proof of Theorem \ref{thm:Uniqueness}]
Let $\delta_1,\delta_2\leq \delta^*$ and let $W^i$ solving \eqref{eq:functional equations} for $|t|\leq \delta_i$, with $\(W^i\)'(0)=P$, and $V_{W^i}(t)\in N_{\varepsilon}(\mathcal P)$ for $|t|\leq \delta_i$ for $i=1,2$. From Lemma \ref{lm:uniqueness lemma} $W^i\in \mathcal C(\delta_i)$, and since they solve \eqref{eq:functional equations} we have $TW^i(t)=W^i(t)$ for $|t|\leq \delta_i$, $i=1,2$.
Let $\delta=\min\{\delta_1,\delta_2\}$. We have $\mathcal C(\delta_i)\subset \mathcal C(\delta)$, $i=1,2$.
Since $\delta\leq \delta^*$, from the proof of the existence theorem $T$ has a unique fixed point in $\mathcal C(\delta)$. But, $TW^i=W^i$ for $|t|\leq \delta$ and so $W^1=W^2$ for $|t|\leq \delta$.
\end{proof}

\begin{remark}\rm
If the vector $P$ solution to the system \eqref{eq:system P}
does not satisfy \eqref{eq:First Component}, then the system \eqref{eq:functional equations} may have infinitely many solutions. This goes back to the paper by Kato and McLeod \cite[Thm. 2]{katomcleod:functionaldifferentialequations} about single functional differential equations.
We refer to \cite[Equation (1.7)]{fox-mayers-ockendon-tayler:funcdiffeqexplicitexample} for representation formulas for infinitely many solutions.
\end{remark}

\begin{remark}\rm\label{rmk:theorem for general m}
Theorem \ref{thm: Existence} has an extension to more variables and the proof is basically the same. 
In fact, the set up in this case and the result are as follows.

We set $Z(t)=\(z_1(t),\cdots ,z_n(t)\)$ where $z_i(t)$ are real valued functions of one variable, and
$Z'(t)=\(z_1'(t),\cdots ,z_n'(t)\)$. 
Let $H=\(h_1,\cdots ,h_n\)$ where 
$$h_i=h_i\(t;\zeta^0,\zeta^1,\cdots ,\zeta^m;\xi^0,\xi^1,\cdots ,\xi^m\)$$ 
are real valued functions with $\zeta^j,\xi^j\in \R^n$ for $1\leq j\leq m$, so each $h_i$ has $1+2\,(m+1)\,n$ variables.
Let 
\[
X=\(t;\zeta^0,\zeta^1,\cdots ,\zeta^m;\xi^0,\xi^1,\cdots ,\xi^m\).
\] 
We assume $1\leq m\leq n$.
Set $Z(z_k(t))=\(z_1(z_k(t)),\cdots ,z_n(z_k(t))\)$ for $1\leq k\leq m$.
We want to find $Z(t)$ as above satisfying the following functional differential equation
\begin{align*}
Z'(t)&=H\(t;Z(t),Z(z_1(t)),\cdots ,Z(z_m(t));Z'(t),Z'(z_1(t)),\cdots ,Z'(z_m(t))\)\\
Z(0)&=(0,\cdots ,0)\in \R^n,
\end{align*}
for $t$ in a neighborhood of $0$.
We assume that there exists $P=(p_1,\cdots ,p_n)\in \R^n$ with $|p_i|\leq 1$ for $1\leq i\leq m$ solving
\[
P=H\(0;{\bf 0,\cdots ,0}; P,\cdots ,P\)
\]
where dots mean $m+1$ times. 

Let $\mathcal P=\left(0;{\bf 0,\cdots,0};P,\cdots,P\right)\in R^{2(m+1)n+1}$, and let $\|\cdot \|$ be a norm in $\R^n$ with
$$
N_{\varepsilon}(\mathcal P)=\left\{\(t;{ \zeta^0,\cdots,\zeta^m;\xi^0,\cdots,\xi^m}\); |t|+ \|\zeta^0\|+\cdots+\|\zeta^m\|+\|\xi^0-P\|+\cdots+\|\xi^m-P\|\leq\varepsilon \right\}
$$
a neighborhood of $\mathcal P$ such that 
\begin{enumerate}[(i)]
\item $H$ is uniformly Lipschitz in the variable $t$, i.e., there exists $\Lambda>0$ such that
\[
\left\|H\(\bar t;{ \zeta^0, \cdots,\zeta^m;  \xi^0,\cdots,  \xi^m}\)-H\( t;{ \zeta^0,\cdots, \zeta^m;  \xi^0, \cdots,\xi^m}\)\right\|\leq \Lambda|\bar t-t|.
\]
for all $\(\bar t;{ \zeta^0,\cdots, \zeta^m;  \xi^0,\cdots,  \xi^m}\), \(t;{ \zeta^0,\cdots, \zeta^m; \xi^0,\cdots,  \xi^m}\)\in  N_{\varepsilon}(\mathcal P)$;
\item $H$ is uniformly Lipschitz in the variables $\zeta^0,\cdots,\zeta^m$ i.e., there exist positive constants $L_0,\cdots,L_m$ such that
\[
\left\|H\(t;{ \bar\zeta^0,\cdots ,\bar \zeta^m;  \xi^0,\cdots , \xi^m}\)-H\(t;{\zeta^0, \cdots,\zeta^m;  \xi^0, \cdots,\xi^m}\)\right\|\leq L_0\left\|\bar \zeta^0-\zeta^0\right\|+\cdots+L_m\left\|\bar\zeta^m-\zeta^m\right\|,
\]
for all $\(t;{ \bar\zeta^0, \cdots,\bar\zeta^m;  \xi^0,\cdots ,  \xi^m}\), \(t;{ \zeta^0,\cdots, \zeta^m; \xi^0,\cdots , \xi^m}\)\in  N_{\varepsilon}(\mathcal P)$;
\item $H$ is a uniform contraction in the variables $\xi^0,\cdots,\xi^m,$ i.e., there exists constants $C_0,\cdots, C_m$ such that
\[
\left\|H\(t;{ \zeta^0,\cdots, \zeta^m;  \bar \xi^0,\cdots, \bar \xi^m}\)-H\( t;{ \zeta^0,\cdots, \zeta^m;  \xi^0, \cdots,\xi^m}\)\right\|\leq C_0\left\|\bar \xi^0-\xi^0\right\|+\cdots+C_m\left\|\bar\xi^m-\xi^m\right\|,
\]
for all $\(t;{ \zeta^0,\cdots, \zeta^m;  \bar\xi^0, \cdots ,\bar\xi^m}\), \(t;{\zeta^0,\cdots, \zeta^m; \xi^0, \cdots, \xi^m}\)\in  N_{\varepsilon}(\mathcal P)$,
with
\[
C_0+\cdots+C_m<1;
\]
\item For all $X\in N_{\varepsilon}(\mathcal P)$
\begin{equation*}
|h_1(X)|\leq 1.
\end{equation*}
\end{enumerate}
Under these assumptions, there exists $\delta>0$, such that the system
\begin{equation*}
\begin{cases}
Z'(t)=H\(t;Z(t),Z(z_1(t)),\cdots,Z(z_m(t));Z'(t),Z'(z_1(t)),\cdots,Z'(z_m(t))\)\\
Z(0)=0,
\end{cases}
\end{equation*}
has a unique solution defined for $|t|\leq \delta$ satisfying $Z'(0)=P$. 
\end{remark}

\section{One point source case: Problem B}\label{sec:one point source problem}
\setcounter{equation}{0}

The setup in this section is the following. We are given a unit vector $w\in \R^3$, and a compact domain $\Omega$ contained in the upper unit sphere $S^2$, such that $\Omega=x(D)$, where $D$ is a convex and compact domain in $\R^2$ with nonempty interior. Here  $x(t)$ are for example spherical coordinates, $t\in D$. Dichromatic rays with colors b and r are now emitted from the origin with unit direction $x(t)$, $t\in D$. 
From the results from \cite[Section 3]{gutierrez:asphericallensdesign} with $n_1=n_3=1$, $n_2=n_r$, and $e_1=w$ we have the following. 
Consider a $C^2$ surface with a given polar parametrization $\rho(t)x(t)$ for $t\in D$, and the surface parametrized by 
$f_r(t)=\rho(t)x(t)+d_r(t)m_r(t)$, with $m_r(t)=\dfrac{1}{n_r}\(x(t)-\lambda_r \nu_\rho(t)\)$, $\lambda_r(t)
=\Phi_{n_r}\(x(t)\cdot \nu_\rho(t)\)$ from \eqref{formulaforlambda}, $\nu_\rho(t)$ the outer unit normal at $\rho(t)x(t)$, and with 
\begin{equation}\label{eq:formula for dbt point source}
d_r(t)=\dfrac{C_r-\rho(t)\(1-w\cdot x(t)\)}{n_r-w\cdot m_r(t)}, 
\end{equation}
for some constant $C_r$. Then the lens bounded between $\rho$ and $f_r$ refracts the rays with color r into the direction $w$ provided that $C_r$ is chosen so that $d_r(t)>0$ and $f_r$ has a normal at each point. 
Likewise and for the color b the surface 
$f_b(t)=\rho(t)x(t)+d_b(t)m_b(t)$, with similar quantities as before with r replaced by b,
does a similar refracting job for rays with color b. 

As before, we assume $n_b>n_r>1$, and the medium surrounding the lens is vacuum. To avoid total reflection for each color, compatibility conditions between $\rho$ and $w$ are needed, see \cite[condition (3.8)]{gutierrez:asphericallensdesign} which in our case reads
\[
\lambda_r\,\nu_\rho(t)\cdot w\leq x(t)\cdot w-1,\text{ and } \lambda_b\,\nu_\rho(t)\cdot w\leq x(t)\cdot w-1.
\]

The problem we consider in this section is to determine if there exist $\rho$ and corresponding surfaces $f_r$ and $f_b$ for each color such that $f_r$ can be obtained by a re-parametrization of $f_b$. That is, if there exist a positive function $\rho\in C^2(D)$, real numbers $C_r$ and $C_b$, and a $C^1$ map $\p:D\to D$ such that the surfaces $f_r$ and $f_b$, corresponding to $\rho$, $C_r,C_b$, have normals at each point and 
\begin{equation}\label{eq:equality by re parametrization point source}
f_r(t)=f_b(\p(t))\qquad \forall t\in D.
\end{equation} 

We refer to this as {\it Problem B}. As in the collimated case, if a solution exists $f_r(D) \subseteq f_b(D)$. Again, from an optical point of view, this means that the lens sandwiched between $\rho$ and $f_b$ refracts both colors into $w$; however, there could be points in $f_b(D)$ that are not reached by red rays.

If $w\notin \Omega$, we will show in Theorem \ref{thm:nonexistence point source} that Problem B is not solvable. 
On the other hand, when $w\in \Omega$ we shall prove, in dimension two, that problem B is locally solvable, Theorem \ref{thm:Existence last}. 
Notice that by rotating the coordinates we may assume without loss of generality that $w=e$. Theorem \ref{thm:Existence last} will follow from Theorem \ref{thm: Existence} on functional differential equations, assuming an initial  size condition on the ratio between the thickness of the lens and its distance to the origin. By local solution we mean that there exists an interval $[-\delta,\delta]\subseteq D$, a positive function $\rho\in C^2[-\delta,\delta]$, real numbers $C_r$ and $C_b$, and  
$\varphi:[-\delta,\delta]\to [-\delta,\delta]$ $C^1$ such that the corresponding surfaces $f_r$ and $f_b$ have normals at every point and 
\begin{equation*}
f_r(t)=f_b(\p(t))\qquad \forall t\in [-\delta,\delta].
\end{equation*} 
We will also show a necessary condition for solvability of Problem B, Corollary \ref{cor: Sufficient Condition}.

We first state the following lemma whose proof is the same as that of Lemma \ref{lem:fixed point}.
\begin{lemma}\label{lem:fixed point normal}
Given a surface $\rho(t)x(t)$, $t\in D$, and $w$ a unit vector in $\R^3$, let $f_r$ and $f_b$ be the surfaces parametrized as above. If $f_r(t)=f_b(t)$ for some $t\in D$, then $\nu_\rho(t)=x(t)$. In addition $d_b(t)=d_r(t)$.
\end{lemma}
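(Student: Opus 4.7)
The plan is to mimic exactly the proof of Lemma \ref{lem:fixed point}, replacing the vertical incident direction $e$ used in the collimated setting by the radial direction $x(t)$ used in the point-source setting. The two key ingredients are the explicit parametrizations of $f_r$ and $f_b$ given just before the problem statement, together with Lemma \ref{item:m vectors are equal}, which characterizes when the two refracted directions for a dichromatic ray coincide.

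First, I would write out the hypothesis $f_r(t)=f_b(t)$ using the parametrizations
\[
f_r(t)=\rho(t)x(t)+d_r(t)\,m_r(t),\qquad f_b(t)=\rho(t)x(t)+d_b(t)\,m_b(t).
\]
Subtracting the common term $\rho(t)x(t)$ yields the single vector identity $d_r(t)\,m_r(t)=d_b(t)\,m_b(t)$. Taking norms and using the fact that $m_r(t)$ and $m_b(t)$ are unit vectors (since they are Snell-refracted directions of a unit incident vector) immediately gives $d_r(t)=d_b(t)$, which is the second conclusion of the lemma. Plugging this back in then forces $m_r(t)=m_b(t)$.

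Second, I would apply Lemma \ref{item:m vectors are equal} at the point $P=\rho(t)x(t)$ on the lower surface. In that lemma the incident direction is $x(t)$ (the unit direction of the ray emanating from the origin) and the outward normal is $\nu_\rho(t)$; the conclusion $m_b=m_r$ is equivalent to the incident direction coinciding with the normal, so $x(t)=\nu_\rho(t)$, which is the first conclusion.

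The argument is essentially one line once the right ingredients are assembled, so there is no real obstacle; the only thing to double-check is that Lemma \ref{item:m vectors are equal} is indeed applicable here, i.e., that the setup of a single surface $\rho$ separating vacuum from the lens material, together with the two refractive indices $n_r$ and $n_b$, matches the hypotheses of that lemma. Since the definitions of $m_r(t)$ and $m_b(t)$ in this section are precisely the Snell-refracted directions with $n_1=1$ and $n_2=n_r$ or $n_b$, the hypotheses of Lemma \ref{item:m vectors are equal} are satisfied verbatim, and the proof is complete.
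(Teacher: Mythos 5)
Your proof is correct and is exactly the argument the paper intends: the paper itself states that the proof of this lemma "is the same as that of Lemma \ref{lem:fixed point}," and your adaptation (subtracting the common term $\rho(t)x(t)$, using that $m_r,m_b$ are unit and $d_r,d_b>0$ to get $d_r(t)=d_b(t)$ and hence $m_r(t)=m_b(t)$, then invoking Lemma \ref{item:m vectors are equal}) is precisely that translation to the point-source setting.
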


We next show nonexistence of solutions to Problem $B$ for $w\notin \Omega$.

\begin{theorem}\label{thm:nonexistence point source}
Let $w$ be a unit vector in $\R^3$. If Problem B is solvable, then $x(t)=w$ for some $t\in D$. Therefore, since $x(D)=\Omega$, Problem $B$ has no solutions for $w\notin \Omega$.
\end{theorem}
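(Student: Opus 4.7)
The proof will closely mirror the argument for Theorem~\ref{thm:monotonicity of phi} (Problem A), with Lemma~\ref{lem:fixed point normal} playing the role that Lemma~\ref{lem:fixed point} plays in the collimated case, and with Lemma~\ref{item:m vectors are equal} supplying the dichromatic constraint at the upper surface.

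The plan is to argue by contradiction and apply the Brouwer fixed point theorem to the reparametrization $\varphi$. Suppose Problem~B admits a solution, so there exist $\rho$, $C_r$, $C_b$ and a $C^1$ map $\varphi\colon D\to D$ with $f_r(t)=f_b(\varphi(t))$ for all $t\in D$. Since $D\subset\R^2$ is compact and convex with nonempty interior and $\varphi$ is continuous, Brouwer's fixed point theorem (\cite[Sect.~2, Chap.~XVI]{dugundji:topology}) yields $t_0\in D$ with $\varphi(t_0)=t_0$. Then
\[
f_r(t_0)=f_b(\varphi(t_0))=f_b(t_0).
\]

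Next, I would apply Lemma~\ref{lem:fixed point normal} at $t_0$ to conclude that $\nu_\rho(t_0)=x(t_0)$, i.e.\ at the point $\rho(t_0)x(t_0)$ of the lower surface the incident direction coincides with the outer unit normal. By Snell's law at $\rho(t_0)x(t_0)$ (with either refractive index), the refracted rays inside the lens must then also be parallel to the normal, so
\[
m_r(t_0)=m_b(t_0)=x(t_0).
\]
In particular, at the common upper surface point $Q:=f_r(t_0)=f_b(t_0)$, the two colored rays arriving from inside the lens share the incident direction $x(t_0)$, and both are required (by the defining property of $f_r$ and $f_b$) to be refracted into the direction $w$.

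The crucial step is now to extract from this the identity $w=x(t_0)$. Since the refracted directions in vacuum for the red and blue rays at $Q$ are equal (both equal $w$), Lemma~\ref{item:m vectors are equal}, applied at $Q$ with the roles of incidence/refraction reversed (tracing backwards from vacuum into the lens, or equivalently using that the Snell law is reversible and that $n_b\neq n_r$), forces the common incident direction $x(t_0)$ to be parallel to the normal $\nu(Q)$ of the upper surface. Hence the incidence direction at $Q$ coincides with $\nu(Q)$, and Snell's law then forces the refracted direction $w$ to equal $\nu(Q)=x(t_0)$. Thus $w=x(t_0)\in x(D)=\Omega$, contradicting $w\notin\Omega$.

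The main obstacle I anticipate is purely bookkeeping: carefully invoking Lemma~\ref{item:m vectors are equal} in the dichromatic direction that is relevant here (two inside rays of different colors sharing an incident direction and producing the same refracted direction in vacuum). This is really the same optical fact stated in the lemma, since the Snell relation is symmetric in the roles of the two media, but it should be stated cleanly to justify $x(t_0)=\nu(Q)$. Once that identification is in place, the final equality $w=x(t_0)$ is immediate from the scalar Snell law. The last sentence of the theorem is then the direct contrapositive.
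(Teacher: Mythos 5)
Your proposal is correct and follows essentially the same route as the paper: Brouwer's fixed point theorem gives a fixed point $t_0$ of $\varphi$, Lemma \ref{lem:fixed point normal} gives $\nu_\rho(t_0)=x(t_0)$ and hence $m_r(t_0)=m_b(t_0)=x(t_0)$, and the Snell law at the common upper point together with $n_r\neq n_b$ forces $w=x(t_0)$. The paper states this last step directly from the vector Snell law rather than by invoking Lemma \ref{item:m vectors are equal} in reverse, but the content is identical.
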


\begin{proof}
Suppose there exist $\rho$ and $\varphi:D\to D$ satisfying \eqref{eq:equality by re parametrization point 
source}. Since $D$ is a compact and convex domain, by Brouwer fixed point theorem $\varphi$ has a fixed point $t_0$, and from
\eqref{eq:equality by re parametrization point source} $f_r(t_0)=f_b(t_0)$. Therefore, by Lemma \ref{lem:fixed point normal} $\nu_
\rho(t_0)=x(t_0)$, and by the Snell's law at 
$\rho(t_0)x(t_0)$ we have $m_b(t_0)=m_r(t_0)=x(t_0)$. Using Snell's law again at $f_r(t_0)=f_b(t_0)$, since $n_r\neq n_b$ and both colors with direction $x(t_0)$ are refracted at 
$f_r(t_0)=f_b(t_0)$ into $w$, we obtain $x(t_0)=w$.
\end{proof}

From now on our objective is to show that problem B is locally solvable in dimension two when $w\in \Omega$, Theorem \ref{thm:Existence last}.

\subsection{Two dimensional case, $w\in \Omega$.} Let $w$ be a unit vector in $\R^2$, by rotating the coordinates we will assume that $w=e=(0,1)$. Let $\Omega$ be a compact domain of the upper circle, such that $\Omega=x(D)$ where $D$ is a closed interval in $(-\pi/2,\pi/2)$, and $x(t)=(\sin t,\cos t)$. 

We will use  the following expression for the normal to a parametric curve.
\begin{lemma}\label{lm:first normal}
If a curve is given by the polar parametrization $\rho(t)x(t)=\rho(t)\,(\sin t, \cos t)$, with $\rho\in C^1$, then the unit outer normal is
$$\nu(t)=\dfrac{1}{\sqrt{\rho^2(t)+\rho'(t)^2}}\left(\rho(t)\sin t-\rho'(t)\cos t, \rho'(t)\sin t+\rho(t)\cos t\right).$$

\end{lemma}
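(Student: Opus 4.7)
The plan is a direct calculation that reduces to three routine steps: differentiating the parametrization, rotating by a right angle to obtain a normal vector, and then fixing the sign so that the normal points outward.

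First I would compute the tangent vector. Writing $\gamma(t) = \rho(t)x(t) = (\rho(t)\sin t,\rho(t)\cos t)$ and differentiating componentwise gives
\[
\gamma'(t) = \bigl(\rho'(t)\sin t + \rho(t)\cos t,\ \rho'(t)\cos t - \rho(t)\sin t\bigr),
\]
whose Euclidean length is $\sqrt{\rho(t)^2 + \rho'(t)^2}$, since the cross terms cancel by $\sin^2 t + \cos^2 t = 1$.

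Next I would apply the counterclockwise rotation by $\pi/2$, i.e.\ $(a,b)\mapsto(-b,a)$, to $\gamma'(t)$. This yields the vector
\[
\bigl(\rho(t)\sin t - \rho'(t)\cos t,\ \rho'(t)\sin t + \rho(t)\cos t\bigr),
\]
which already matches the numerator in the statement. Dividing by $\sqrt{\rho^2+(\rho')^2}$ produces the unit normal formula claimed.

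Finally I would check that this is the \emph{outer} normal rather than its opposite. The cleanest way is to verify on the model case $\rho\equiv R$ (a circular arc): the formula collapses to $(R\sin t, R\cos t)/R = x(t)$, which is obviously the outward radial direction, so the chosen orientation is correct. Equivalently, one can observe that $\nu(t)\cdot x(t) = \rho(t)/\sqrt{\rho^2+(\rho')^2} > 0$, so $\nu$ always has a positive radial component, which characterizes the outer normal for a graph over the sphere. There is no serious obstacle here; the only thing to watch is picking the correct sign of the rotation, and the circle test pins that down.
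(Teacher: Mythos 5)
Your proposal is correct and follows essentially the same route as the paper: differentiate the parametrization, observe the length is $\sqrt{\rho^2+\rho'^2}$, take the perpendicular vector, and fix the sign by requiring $x(t)\cdot\nu(t)>0$. No issues.
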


\begin{proof}
The tangent vector to the curve at the point $\rho(t)\,x(t)$, with $x(t)=(\sin t,\cos t)$, equals
$$
(\rho(t)\, x(t))'=\rho'(t)x(t)+\rho(t)x'(t)
=\left(\rho'(t)\sin t+\rho(t)\cos t, \rho'(t)\cos t-\rho(t)\sin t\right).
$$
Thus
\begin{align*}
|(\rho(t)\, x(t))'|^2
&=\rho(t)^2+\rho'(t)^2.
\end{align*}
Hence the normal
$$
\nu(t)=\pm\dfrac{1}{\sqrt{\rho^2(t)+\rho'(t)^2}}\left(\rho(t)\sin t-\rho'(t)\cos t, \rho'(t)\sin t+\rho(t)\cos t\right).
$$
Since $\nu(t)$ is outer, i.e. $x(t)\cdot \nu(t)\geq 0$, so we take the positive sign above and 
%
the lemma follows.
\end{proof}

As a consequence, we obtain the following important lemma.
\begin{lemma}\label{lem:initial condition point source}
Assume Problem $B$ is solvable in the plane when $w=e$. Then $0\in D$,
$$\varphi(0)=0,\qquad d_b(0)=d_r(0), \qquad \text{and } \rho'(0)=0.$$
\end{lemma}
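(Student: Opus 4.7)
The plan is to extract all four conclusions from a single fixed point of $\varphi$, identify that fixed point as $0$, and then read off $\rho'(0)=0$ from the normal formula in Lemma \ref{lm:first normal}.

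First I would note that since $w=e=(0,1)=x(0)$ and $w\in\Omega=x(D)$, injectivity of $x$ on $(-\pi/2,\pi/2)$ forces $0\in D$. Next, because $D$ is a closed interval and $\varphi:D\to D$ is continuous, Brouwer's fixed point theorem (in its trivial one-dimensional form: continuous self-map of a compact interval has a fixed point) yields some $t_0\in D$ with $\varphi(t_0)=t_0$. Then \eqref{eq:equality by re parametrization point source} gives $f_r(t_0)=f_b(t_0)$, so Lemma \ref{lem:fixed point normal} applies and furnishes simultaneously $\nu_\rho(t_0)=x(t_0)$ and $d_b(t_0)=d_r(t_0)$.

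The key step is now to show $t_0=0$. Since $\nu_\rho(t_0)=x(t_0)$, the incidence angle at the lower face is zero, so Snell's law gives $m_r(t_0)=m_b(t_0)=x(t_0)$, i.e.\ both color rays continue undeflected inside the lens and arrive at the common point $f_r(t_0)=f_b(t_0)$ with the same direction $x(t_0)$. At that point they must both refract into $w=e$; applying Snell's law at the upper face for each color and subtracting (using $n_r\neq n_b$) forces $x(t_0)=e=x(0)$, hence $t_0=0$. This simultaneously yields $\varphi(0)=0$ and $d_b(0)=d_r(0)$, and also records $\nu_\rho(0)=x(0)=e$.

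Finally, to obtain $\rho'(0)=0$, I would plug $t=0$ into the formula of Lemma \ref{lm:first normal}: with $\sin 0=0$, $\cos 0=1$,
\[
\nu_\rho(0)=\frac{1}{\sqrt{\rho(0)^2+\rho'(0)^2}}\bigl(-\rho'(0),\,\rho(0)\bigr),
\]
and equating with $e=(0,1)$ gives $\rho'(0)=0$ immediately (the positivity of $\rho(0)$ fixes the sign). There is no real obstacle here; the only thing that needs care is the chain of Snell-law arguments establishing $t_0=0$, which is essentially a replay of the proof of Theorem \ref{thm:nonexistence point source} but exploiting that in dimension two any continuous self-map of an interval has a fixed point.
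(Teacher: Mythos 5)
Your proposal is correct and follows essentially the same route as the paper: the paper simply invokes the proof of Theorem \ref{thm:nonexistence point source} (Brouwer fixed point, Lemma \ref{lem:fixed point normal}, and the two Snell-law applications forcing $x(t_0)=e$, hence $t_0=0$) and then reads off $\rho'(0)=0$ from Lemma \ref{lm:first normal} exactly as you do. The only difference is cosmetic — you replay that argument explicitly rather than citing it.
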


\begin{proof}
Using the proof of Theorem \ref{thm:nonexistence point source}, there exists $t_0\in D$ such that $\varphi(t_0)=t_0$ and $x(t_0)=e=(0,1)$, then $(\sin t_0,\cos t_0)=(0,1),$ and $t_0=0$. By Lemma \ref{lem:fixed point normal}, we get $d_b(0)=d_r(0)$, and $\nu_{\rho}(0)=e$. Therefore, Lemma \ref{lm:first normal} yields
$$
(0,1)=\dfrac{1}{\sqrt{\rho^2(0)+\rho'(0)^2}}(-\rho'(0),\rho(0)).
$$
\end{proof}
\subsection{Derivation of a system of functional equations from the solvability of Problem B in the plane.}\label{subsec:Problem B implies System}
Assume Problem B has a solution refracting rays of both colors b and r into the direction $e$, and recall Lemma \ref{lem:initial condition point source}.
\footnote{We are assuming that $0$ is an interior point of $D$. Otherwise, in our statements the interval $[-\delta,\delta]$ has to be replaced by either $[-\delta,0]$ or $[0,\delta]$.}
%

We set $\rho(0)=\rho_0$ and $d_b(0)=d_r(0)=d_0$, and prove the following theorem.
\begin{theorem}\label{thm:Problem B implies system}
Suppose there exist $\rho$ and $\varphi$ solving Problem $B$ in an interval $D$. 
Let $Z(t)=\(z_1(t),z_2(t),z_3(t),z_4(t),z_5(t)\)\in \mathcal \R^5$ with
\begin{equation}\label{eq:change of variables phi to z}
z_1(t)=\varphi(t),\quad z_2(t)=v_1(t)+\rho_0,\quad z_3(t)=v_2(t),\quad z_4(t)=v_1'(t),\quad z_5(t)=v_2'(t)-\rho_0,
\end{equation}
where $v_1(t)=-\rho(t)\cos t$, and $v_2(t)=\rho(t)\sin t$.

Let $\mathcal Z=(0;{\bf 0,0;}Z'(0),Z'(0))\in \R^{21}$.\footnote{$Z'(0)=\(\p'(0),0,\rho_0,-\rho''(0)+\rho_0,0\)$.}
There exists a neighborhood of $\mathcal Z$ and a map $H$ defined and smooth in that neighborhood with
%
$$
H:=H(t;\zeta^0,\zeta^1;\xi^0,\xi^1)=(h_1,\cdots,h_5)
$$
where  $\zeta^0,\zeta^1,\xi^0,\xi^1\in\R^5$, 
$
\zeta^i=\(\zeta^i_1,\zeta^i_2,\cdots,\zeta^i_5\), \xi^i=\(\xi^i_1,\xi^i_2,\cdots,\xi^i_5\),
$
and with the functions $h_1,\cdots ,h_5$ given by  \eqref{eq:formula for h1}, \eqref{eq:formula for h2}, \eqref{eq:formula for h3}, \eqref{eq:formula for h4}, and
\eqref{eq:h5formula}, respectively, such that
$Z$ is a solution to the system of functional differential equations
\begin{align}
Z'(t)&=H\(t;Z(t),Z(z_1(t));Z'(t),Z'(z_1(t))\)\label{eq:Optic System}\\
Z(0)&={\bf 0}\notag
\end{align}
for $t$ in a neighborhood of $0$. The map $H$ depends on the values $\rho_0$ and $d_0$.

\end{theorem}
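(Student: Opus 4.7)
The plan is to convert the vector identity $f_r(t)=f_b(\varphi(t))$ together with its first $t$-derivative into an explicit system of five first-order equations for the components of $Z'(t)$, and verify that the resulting map $H$ is smooth on a neighborhood of $\mathcal Z$. I will use throughout the conclusions of Lemma \ref{lem:initial condition point source}, namely $\rho(0)=\rho_0$, $\rho'(0)=0$, $d_r(0)=d_b(0)=d_0$, and $\varphi(0)=0$, which in turn force $C_r=d_0(n_r-1)$ and $C_b=d_0(n_b-1)$.

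First I rewrite all optical quantities in the $Z$-coordinates. By Lemma \ref{lm:first normal} the outer unit normal to the polar curve is
\[
\nu_\rho(t)=\frac{(v_1',v_2')}{\sqrt{v_1'^2+v_2'^2}}=\frac{(z_4,\,z_5+\rho_0)}{\sqrt{z_4^2+(z_5+\rho_0)^2}},
\]
and $\rho(t)=\sqrt{(z_2-\rho_0)^2+z_3^2}$. Consequently the dot product $x(t)\cdot\nu_\rho(t)$, and hence $\lambda_r,\lambda_b,m_r,m_b,d_r,d_b$ coming from \eqref{formulaforlambda} and \eqref{eq:formula for dbt point source}, all become smooth functions of $(t,z_2,z_3,z_4,z_5)$ in a neighborhood of the base point, because both denominators stay close to $\rho_0>0$ there. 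Therefore $f_r(t)-f_b(\varphi(t))=0$ yields two smooth scalar equations $\Phi_i(t,Z(t);z_1(t),Z(z_1(t)))=0$, $i=1,2$, which hold automatically at $\mathcal Z$.

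Two components of $H$ are immediate from the definitions \eqref{eq:change of variables phi to z}: $z_2'(t)=v_1'(t)=z_4(t)$ and $z_3'(t)=v_2'(t)=z_5(t)+\rho_0$, so I set $h_2(X)=\xi^0_4$ and $h_3(X)=\xi^0_5+\rho_0$. For $h_1,h_4,h_5$ I differentiate $\Phi_1=\Phi_2=0$ in $t$; after substituting the trivial derivatives $z_2'=z_4$ and $z_3'=z_5+\rho_0$ at both $t$ and $z_1(t)$, I obtain two equations linear in $z_1'(t),z_4'(t),z_5'(t)$ whose coefficients are smooth in the $H$-arguments. Since $z_4'=v_1''$ and $z_5'=v_2''$ are linked to $\rho''$ through the chain-rule identities
\[
v_1''=-\rho''\cos t+2\rho'\sin t+\rho\cos t,\qquad v_2''=\rho''\sin t+2\rho'\cos t-\rho\sin t,
\]
the two equations can equivalently be regarded as a $2\times2$ linear system in the pair $(\rho''(t),\varphi'(t))$. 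At the base point $\mathcal Z$, where $\nu_\rho(0)=e$, $m_r(0)=m_b(0)=e$ and $\rho'(0)=0$, a direct computation which essentially uses $n_b\neq n_r$ shows that the coefficient matrix of this $2\times 2$ system is invertible. Cramer's rule then gives $(\rho''(t),\varphi'(t))$ as smooth functions of the $H$-arguments on a neighborhood of $\mathcal Z$, and back-substitution into the chain-rule identities delivers $h_1,h_4,h_5$ smoothly, completing the definition of $H$ and establishing \eqref{eq:Optic System}.

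The main obstacle is not conceptual but a piece of careful bookkeeping: producing the clean explicit formulas for $h_1,h_4,h_5$ requires differentiating $f_r$ and $f_b$ with respect to $(t,z_2,z_3,z_4,z_5)$, substituting the trivial derivatives, and evaluating the $2\times2$ determinant at $\mathcal Z$ to certify non-degeneracy. These are finite algebraic computations relying only on Snell's law and the formulas of Section 2.
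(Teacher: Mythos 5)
Your reduction of the differentiated identity $f_r(t)=f_b(\varphi(t))$ to a $2\times2$ system in $(\rho''(t),\varphi'(t))$ breaks down exactly at the base point, so the invertibility claim on which your construction of $h_1,h_4,h_5$ rests is false. At $t=0$, Lemma \ref{lem:initial condition point source} gives $\rho'(0)=0$ and $\nu_\rho(0)=e$, hence $m_r(0)=m_b(0)=e$; since $m_r,m_b$ are unit vectors with vanishing first component there, $m_{2r}'(0)=m_{2b}'(0)=0$ and then $d_r'(0)=d_b'(0)=0$. Consequently $f_{2r}'(0)=f_{2b}'(0)=0$, and these vanish \emph{independently of the unknowns}: the only way $\rho''(0)$ could enter $f_{2r}'(0)$ is through $\xi_4=z_4'(0)=\rho_0-\rho''(0)$ (note $\xi_5=z_5'(0)=v_2''(0)=0$ regardless of $\rho''(0)$), and one checks from \eqref{eq:derivativeformula} and \eqref{eq:values at 0} that $\partial_{\xi_4}\widetilde\tau_r$ and $\partial_{\xi_4}\widetilde D_r$ vanish at $(0,{\bf 0},Z'(0))$ because $\zeta_4=0$ there. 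So the second component of the differentiated identity reads $0=\varphi'(0)\cdot 0$ at the base point: that row of your coefficient matrix is identically zero, the matrix is singular, and Cramer's rule cannot deliver smooth $h_4$ (nor help determine $\varphi'$) near $\mathcal Z$. Geometrically this degeneracy is unavoidable: along the axial ray the exit surface is horizontal, so the second coordinate of the exit point is critical and its first-order variation carries no information.

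The paper circumvents this by not using the second component of the position identity at all. The extra equation comes instead from the Snell law at the exit surface: both colors leave from the same point of $S$ with the same normal, which yields $\mu_r(t,Z)\,\Lambda_{2,b}(z_1,Z(z_1))=\mu_b(z_1,Z(z_1))\,\Lambda_{2,r}(t,Z)$; differentiating this in $t$ produces an equation in which $z_4'$ carries the coefficient $\tfrac1{n_r}\mathcal A_r(Z)\Lambda_{2,b}$, which is strictly negative at $\mathcal Z$, so one can solve for $z_4'$ and obtain \eqref{eq:formula for h4}. The fifth equation is then the purely geometric relation $v_2(t)=-(\tan t)\,v_1(t)$ differentiated twice, giving \eqref{eq:h5formula}, and $h_1$ comes from the first component alone, where the needed nonvanishing $\widetilde F_{1b}(0,{\bf 0},Z'(0))=f_{1b}'(0)\neq 0$ is proved using $n_r\neq n_b$. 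Your treatment of $h_1$, $h_2$, $h_3$ agrees with the paper, but without replacing the degenerate second-component equation by this collinearity relation (or an equivalent non-degenerate consequence of the Snell law at $S$), the proposal does not go through.
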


\begin{proof}
From Lemma \ref{lem:initial condition point source}, $Z(0)={\bf 0}$.
We will derive the expressions for $h_i(t;\zeta^0,\zeta^1;\xi^0,\xi^1)$, $1\leq i\leq 5$, so that 
$$z_i'(t)=h_i\(t;Z(t),Z(z_1(t)),Z'(t),Z'(z_1(t))\).$$

The first step is to express the quantities involved as functions of $$\(t;Z(t),Z(z_1(t));Z'(t),Z'(z_1(t))\).$$
From the Snell law, a ray emitted from the origin with color r and direction $x(t)=(\sin t,\cos t)$ refracts by a curve $\rho(t)x(t)$ into a medium with refractive index $n_r$ into the direction $m_r(t)$ such that
$
x(t)-\,n_r m_r(t)=\Phi_{n_r}(x\cdot\nu(t))\,\nu(t),
$
where $\nu(t)$ is the outward unit normal to $\rho$ at $\rho(t)x(t)$.
From \eqref{formulaforlambda}
\[
\Phi_{n_r}(s)=s-\sqrt{n_r^2-1+s^2}=\dfrac{1-n_r^2}{s+\sqrt{n_r^2-1+s^2}},
\]
and from Lemma \ref{lm:first normal} $\nu(t)=\dfrac{v'(t)}{|v'(t)|},$ and $x(t)\cdot \nu(t)=\dfrac{\rho(t)}{\sqrt{\rho^2(t)+\rho'(t)^2}}=\dfrac{|v(t)|}{|v'(t)|}$, with $v(t)=\(v_1(t),v_2(t)\)$. So
\begin{align*}
\Phi_{n_r}(x(t)\cdot \nu(t))&=\dfrac{1-n_r^2}{\dfrac{|v(t)|}{|v'(t)|}+\sqrt{n_r^2-1+\dfrac{|v(t)|^2}{|v'(t)|^2}}}=\dfrac{(1-n_r^2)|v'(t)|}{|v(t)|+\sqrt{|v(t)|^2+(n_r^2-1)|v'(t)|^2}}.
\end{align*}
Hence
\begin{align}
m_r(t)&=\dfrac{1}{n_r}\left[x(t)-\Phi_{n_r}(x(t)\cdot\nu(t))\nu(t)\right]=\dfrac{1}{n_r}\left[x(t)-A_r\(v(t),v'(t)\)v'(t)\right]\label{eq:Snelllaw}\\
&:=\(m_{1r}(t),m_{2r}(t)\)\notag
\end{align}
with
\begin{equation}\label{eq:Anr formula}
A_r(v(t),v'(t))=\dfrac{1-n_r^2}{|v(t)|+\sqrt{|v(t)|^2+(n_r^2-1)|v'(t)|^2}}.
\end{equation} 
Rewriting the last expressions in terms of the variables $z_i(t)$ introduced in \eqref{eq:change of variables phi to z}, and omitting the dependance in $t$ to simplify the notation, we obtain
\begin{align}
A_r(v(t),v'(t))&=\dfrac{1-n_r^2}{\left|(z_2-\rho_0,z_3)\right|+\sqrt{\left|(z_2-\rho_0,z_3)\right|^2+(n_r^2-1)\left|(z_4,z_5+\rho_0)\right|^2}}:=\mathcal A_r\left(Z(t)\right),\label{eq:Anr}\\
m_{1r}(t)&=\dfrac{1}{n_r}\left[\sin t-\mathcal A_r(Z)\,z_4\right]:=\mu_r\(t,Z(t)\),\label{eq:m1r}\\
m_{2r}(t)&=\dfrac{1}{n_r}\left[\cos t-\mathcal A_r(Z)\, (z_5+\rho_0)\right]:=\tau_r\(t,Z(t)\).\label{eq:m2r}
\end{align}
Notice that $\tau_r\(0,Z(0)\)=\tau_r\(0,\bf{0}\)=1$.

If for each $t$, the ray with direction $m_r(t)$ is refracted by the upper face of the lens into the direction $e=(0,1)$, then the upper face is parametrized by the vector
$$f_{r}(t)=\rho(t)x(t)+d_r(t)m_{r}(t):=\(f_{1r}(t),f_{2r}(t)\).$$ 
From \eqref{eq:formula for dbt point source}, and \eqref{eq:m2r}
\begin{align}
d_r(t)&=\dfrac{C_{r}-\rho(t)(1-\cos t)}{n_r-m_{2r}(t)}=\dfrac{C_{r}-|v(t)|-v_1(t)}{n_r-\tau_r(t,Z(t))}\label{eq:Dr}\\
&=\dfrac{C_{r}-\left|(z_2-\rho_0,z_3)\right|-z_2+\rho_0}{n_r-\tau_r\(t,Z(t)\)}:=D_r\(t,Z(t)\),\notag
\end{align}
with $C_r$ a constant. 
Since $\rho$ and $\varphi$ solve Problem B, from Lemma \ref{lem:initial condition point source} we have $\rho'(0)=0$, and $\nu(0)=(0,1)$. So $m_r(0)=(0,1)$ and from \eqref{eq:formula for dbt point source} we get $C_r=(n_r-1)\,d_0$.
Hence
\begin{align}
f_{1r}(t)&=z_3+D_r(t,Z)\,\mu_r(t,Z):=F_{1r}(t,Z(t))\label{eq:F1r}\\
f_{2r}(t)&=-z_2+\rho_0+D_r(t,Z)\,\tau_r(t,Z):=F_{2r}(t,Z(t))\label{eq:F2r}.
\end{align}
In addition, by the Snell law at $f_r(t)$, 
$
m_r(t)-\dfrac{1}{n_r}e=\lambda_{2,r}(t)\,\nu_r(t),
$
where $\nu_r$ is the normal to the upper surface at the point $f_r(t)$ (we are using here that the normal to $f_r$ exists since we are assuming Problem B is solvable).
Since $n_r>1$, then $\lambda_{2,r}>0$ and so taking absolute values in the last expression yields
\begin{align}
\lambda_{2,r}(t)&=\left|m_r(t)-\dfrac{1}{n_r}e\right|=\sqrt{1+\dfrac{1}{n_r^2}-\dfrac{2}{n_r}m_{2r}(t)}=\sqrt{1+\dfrac{1}{n_r^2}-\dfrac{2}{n_r}\tau_r(t,Z)}:=\Lambda_{2,r}(t,Z(t)).\label{eq:lambdar}
\end{align}
For $t\in \R$ and $\zeta=(\zeta_1,\cdots,\zeta_5)\in \R^5$ we let
\begin{equation}\label{eq:Formulas 1}
\begin{cases}
&\mathcal A_r(\zeta)=\dfrac{1-n_r^2}{\left|(\zeta_2-\rho_0,\zeta_3)\right|+\sqrt{\left|(\zeta_2-\rho_0,\zeta_3)\right|^2+(n_r^2-1)\left|(\zeta_4,\zeta_5+\rho_0)\right|^2}}\\
&\mu_r(t,\zeta)=\dfrac{1}{n_r}\left[\sin t-\mathcal A_r(\zeta)\,\zeta_4\right],\qquad
\tau_r(t,\zeta)=\dfrac{1}{n_r}\left[\cos t-\mathcal A_r(\zeta)\, (\zeta_5+\rho_0)\right]\\
&D_r(t,\zeta)=\dfrac{C_{r}-\left|(\zeta_2-\rho_0,\zeta_3)\right|-\zeta_2+\rho_0}{n_r-\tau_r\(t,\zeta\)},\qquad \text{with } C_r=(n_r-1)\,d_0\\
&F_{1r}(t,\zeta)= \zeta_3+D_r(t,\zeta)\,\mu_r(t,\zeta),\qquad
F_{2r}(t,\zeta)=-\zeta_2+\rho_0+D_r(t,\zeta)\,\tau_r(t,\zeta)\\
&\Lambda_{2,r}(t,\zeta)=\sqrt{1+\dfrac{1}{n_r^2}-\dfrac{2}{n_r}\tau_r(t,\zeta)}.
\end{cases}
\end{equation}
The functions $\mathcal A_r,\mu_r,\tau_r$ are well defined and smooth for all $t\in \R$ and for all $\zeta=(\zeta_1,\zeta_2,\zeta_3,\zeta_4,\zeta_5)$ with $(\zeta_2,\zeta_3)\neq (\rho_0,0)$.
Since $\tau_r\(0,\bf{0}\)=1$, then all functions in \eqref{eq:Formulas 1} are well defined and smooth in a neighborhood of $t=0$ and $\zeta=\bf 0$.
Notice that the definitions of $\mathcal A_r(\zeta),\mu_r(t,\zeta),\tau_r(t,\zeta)$ and $\Lambda_{2,r}(t,\zeta)$ depend on the value of $\rho_0$, and the definitions of $D_r(t,\zeta),F_{1r}(t,\zeta)$ and $F_{2r}(t,\zeta)$ depend on the values of $\rho_0$ and $d_0$.

To determine later the functions $h_i$ we 
next calculate the derivatives of $A_r,m_{1r}, m_{2r}, d_r, F_{1r}, F_{2r},$ and $\lambda_{2,r}$ with respect to $t$. Differentiating \eqref{eq:Anr}, \eqref{eq:m1r}, \eqref{eq:m2r}, \eqref{eq:Dr}, \eqref{eq:F1r}, and \eqref{eq:F2r} with respect to $t$ yields


%
{\small
\begin{align}
\dfrac{d}{d\,t}A_r(v(t),v'(t))
&=\dfrac{n_r^2-1}{\(\left|(z_2-\rho_0,z_3)\right|+\sqrt{\left|(z_2-\rho_0,z_3)\right|^2+(n_r^2-1)\left|(z_4,z_5+\rho_0)\right|^2}\)^2}\label{eq:tildeAnr}\\
&\qquad \qquad \left[\dfrac{(z_2-\rho_0,z_3)\cdot (z_2',z_3')}{|(z_2-\rho_0,z_3)|}+\dfrac{(z_2-\rho_0,z_3)\cdot (z_2',z_3')+(n_r^2-1)(z_4,z_5+\rho_0)\cdot (z_4',z_5')}{\sqrt{\left|(z_2-\rho_0,z_3)\right|^2+(n_r^2-1)\left|\(z_4,z_5+\rho_0\)\right|^2}}\right]\notag\\
&=\dfrac{\mathcal A_r(Z)^2}{n_r^2-1}\left[ \dfrac{(z_2-\rho_0,z_3)\cdot (z_2',z_3')}{\left|(z_2-\rho_0,z_3)\right|}+\dfrac{(z_2-\rho_0,z_3)\cdot (z_2',z_3') +(n_r^2-1)(z_4,z_5+\rho_0)\cdot (z_4',z_5')}{\sqrt{\left|(z_2-\rho_0,z_3)\right|^2+(n_r^2-1)\left|(z_4,z_5+\rho_0)\right|^2}}\right]\notag\\
&:=\widetilde{\mathcal A}_r(Z(t),Z'(t)),\notag\\
\dfrac{d}{d\,t}m_{1r}(t)&=\dfrac{1}{n_r}\left[\cos t -\mathcal A_r(Z)\,z_4'-\widetilde{\mathcal A}_r(Z,Z')\,z_4\right]:=\widetilde{\mu}_r\(t,Z(t),Z'(t)\),\label{eq:tildemur}\\
\dfrac{d}{d\,t}m_{2r}(t)&=\dfrac{1}{n_r}\left[-\sin t-\mathcal A_r(Z)\,z_5'-\widetilde{\mathcal A}_r(Z,Z')\,(z_5+\rho_0)\right]:=\widetilde{\tau}_r\(t,Z(t),Z'(t)\)\label{eq:tildetaur},
\end{align}
\begin{align}
\dfrac{d}{d\,t}d_r(t)&=\dfrac{\left(-\dfrac{\(z_2-\rho_0,z_3\)\cdot(z_2',z_3')}{\left|\(z_2-\rho_0,z_3\)\right|}-z_2'\right)(n_r-\tau_r(t,Z))+\widetilde{\tau}_r(t,Z,Z')\left(C_{r}-\left|\(z_2-\rho_0,z_3\)\right|-z_2+\rho_0\right)}{\(n_r-\tau_r(t,Z)\)^2}\label{eq:tildeDr}\\
&=\dfrac{-\dfrac{(z_2-\rho_0,z_3)\cdot (z_2',z_3')}{\left|(z_2-\rho_0,z_3)\right|}-z_2'+\tilde \tau_r(t,Z,Z')\,D_r(t,Z)}{n_r-\tau_r(t,Z)}:=\widetilde {D}_r(t,Z(t),Z'(t))\notag
\end{align}
\begin{align}
\dfrac{d}{d\,t}f_{1r}(t)&=z_3'+D_r(t,Z)\,\widetilde{\mu}_r(t,Z,Z')+\widetilde{D}_r(t,Z,Z')\,\mu_r(t,Z):=\widetilde{F}_{1r}(t,Z(t),Z'(t))\label{eq:tildeF1r}
\end{align}
\begin{align}
\dfrac{d}{d\,t}f_{2r}(t)&=-z_2'+D_r(t,Z)\,\widetilde{\tau}_r(t,Z,Z')+\widetilde{D}_r(t,Z,Z')\,\tau_r(t,Z):=\widetilde {F}_{2r}(t,Z(t),Z'(t))\label{eq:tildeF2r}\\
\dfrac{d}{d\,t}\lambda_{2,r}(t)&=-\dfrac{\dfrac{1}{n_r}\widetilde{\tau}_r(t,Z,Z')}{\Lambda_{2,r}(t,Z)}:=\widetilde{\Lambda}_{2,r}(t,Z(t),Z'(t)).\label{eq:tildelambda}
\end{align}
}
For $t\in \R$ and $\zeta,\xi\in \mathcal \R^5$ we let
{\small 
\begin{equation}\label{eq:derivativeformula}
\begin{cases}
&\widetilde{\mathcal A}_r(\zeta,\xi)=\dfrac{\mathcal A_r(\zeta)^2}{n_r^2-1}\left[ \dfrac{(\zeta_2-\rho_0,\zeta_3)\cdot (\xi_2,\xi_3)}{\left|(\zeta_2-\rho_0,\zeta_3)\right|}+\dfrac{(\zeta_2-\rho_0,\zeta_3)\cdot (\xi_2,\xi_3) +(n_r^2-1)(\zeta_4,\zeta_5+\rho_0)\cdot (\xi_4,\xi_5)}{\sqrt{\left|(\zeta_2-\rho_0,\zeta_3)\right|^2+(n_r^2-1)\left|(\zeta_4,\zeta_5+\rho_0)\right|^2}}\right]\\ 
&\widetilde{\mu}_{r}(t,\zeta,\xi)=\dfrac{1}{n_r}\left[\cos t -\mathcal A_r(\zeta)\xi_4-\widetilde{\mathcal A}_r(\zeta,\xi)\zeta_4\right]\\ 
&\widetilde{\tau}_{r}(t,\zeta,\xi)=\dfrac{1}{n_r}\left[-\sin t-\mathcal A_r(\zeta)\xi_5-\widetilde{\mathcal A}_r(\zeta,\xi)(\zeta_5+\rho_0)\right]\\ 
&\widetilde{D}_{r}(t,\zeta,\xi)=\dfrac{-\dfrac{(\zeta_2-\rho_0,\zeta_3)\cdot (\xi_2,\xi_3)}{\left|(\zeta_2-\rho_0,\zeta_3)\right|}-\xi_2+\tilde \tau_r(t,\zeta,\xi)\,D_r(t,\zeta)}{n_r-\tau_r(t,\zeta)}\\ 
&\widetilde{F}_{1r}(t,\zeta,\xi)=\xi_3+D_r(t,\zeta)\widetilde{\mu}_r(t,\zeta,\xi)+\widetilde{D}_r(t,\zeta,\xi)\mu_r(t,\zeta)\\ 
&\widetilde{F}_{2r}(t,\zeta,\xi)=-\xi_2+D_r(t,\zeta)\widetilde{\tau}_r(t,\zeta,\xi)+\widetilde{D}_r(t,\zeta,\xi)\tau_r(t,\zeta)\\ 
&\widetilde{\Lambda}_{2,r}(t,\zeta,\xi)=-\dfrac{\dfrac{1}{n_r}\widetilde\tau_r(t,\zeta,\xi)}{\Lambda_{2,r}(t,\zeta)}. 
\end{cases}
\end{equation}
As for \eqref{eq:Formulas 1}, all functions in \eqref{eq:derivativeformula} are well defined and smooth in a neighborhood of $t=0$ and $\zeta=\bf 0$, and for any $\xi$.

}

We mention the following remark that will be used later.
\begin{remark}\label{rmk:derivative}\rm
Using the formulas in \eqref{eq:Formulas 1} and \eqref{eq:derivativeformula}, notice that for any differentiable map $U:V\to R^5$ with $V$ a neighborhood of $t=0$ and with $U(0)=\bf 0$, we have the following formulas valid for $t$ in a neighborhood $V'$ of $t=0$ (possibly smaller than $V$):
$$
\begin{array}{llrr}
\dfrac{d}{dt}[\mathcal A_r(U(t))]&=\widetilde {\mathcal A}_r(U(t),U'(t)),& \dfrac{d}{dt}[\mu_r(t,U(t))]&=\widetilde {\mu}_r(t,U(t),U'(t)), \\ \\ \dfrac{d}{dt}[\tau_r(t,U(t))]&=\widetilde {\tau}_r(t,U(t),U'(t)),&
\dfrac{d}{dt}[D_r(t,U(t))]&=\widetilde {D}_r(t,U(t),U'(t)), \\ \\
\dfrac{d}{dt}[F_{1r}(t,U(t))]&=\widetilde {F}_{1r}(t,U(t),U'(t)) ,
&\dfrac{d}{dt}[F_{2r}(t,U(t))]&=\widetilde {F}_{2r}(t,U(t),U'(t)),\\ \\
\dfrac{d}{dt}[\Lambda_{2,r}(t,U(t))]&=\widetilde {\Lambda}_{2,r}(t,U(t),U'(t)).
\end{array}
$$
\end{remark}
We also obtain the same formulas for the color $b$ with $n_r$ replaced by $n_b$.

We are now ready to calculate $h_i$, one by one, for $i=1,\cdots, 5$ to obtain the system of functional differential equations satisfied by $Z(t)=(\varphi(t),v_1(t)+\rho_0,v_2(t),v_1'(t),v_2'(t)-\rho_0)$. 
Recall $h_i:=h_i\(t;\zeta^0,\zeta^1;\xi^0,\xi^1\)$, with $\zeta^0,\zeta^1,\xi^0,\xi^1\in \R^5$,
$\zeta^i=\(\zeta^i_1,\cdots,\zeta^i_5\)$, $\xi^i=\(\xi^i_1,\cdots,\xi^i_5\),$ $i=0,1$.
\\ \\
\textbf{Calculation of $h_1$.}
Since $z_1=\varphi$ satisfies $f_{r}(t)=f_{b}(\varphi(t))$, taking the first components and differentiating with respect to $t$ yields
\begin{equation}\label{eq:first components derivatives equal}
f_{1r}'(t)=\varphi'(t)\,f_{1b}'(\varphi(t)).
\end{equation}
We claim that $f_{1b}'(\varphi(t))\neq 0$ in a neighborhood of $t=0$. 
\begin{proof}[Proof of the claim.]
By continuity of $f_{1b}'\circ \varphi$ and since $\p(0)=0$ by Lemma \ref{lem:initial condition point source}, it is equivalent to show that $f_{1b}'(0)\neq 0$.

Recall that $f_{1r}(t)=\rho(t)\sin t+d_r(t)m_{1r}(t)$, $\rho(0)=\rho_0$, $d_r(0)=d_0$, and $m_r(0)=(0,1)$. Then 
$$f_{1r}'(0)=\rho_0+d_0\,m_{1r}'(0).$$
Also from \eqref{eq:tildemur}, since $Z(0)={\bf 0}$
$$m_{1r}'(0)=\dfrac{1}{n_r}\left(1-\mathcal A_{r}({\bf 0})z_4'(0)\right).$$
Notice that $z_4'(t)=v_1''(t)=\rho(t)\cos t-\rho''(t)\cos t+2\rho'(t)\sin t$, then $z_4'(0)=\rho_0-\rho''(0)$. Also from \eqref{eq:Anr}
$$
\mathcal A_{r}({\bf 0})=\dfrac{1-n_r^2}{(1+n_r)\rho_0}=\dfrac{1-n_r}{\rho_0}.
$$
We conclude that 
\begin{align*}
m_{1r}'(0)&=\dfrac{1}{n_r}\left(1+\dfrac{n_r-1}{\rho_0}(\rho_0-\rho''(0))\right)=1-\dfrac{n_r-1}{n_r}\dfrac{\rho''(0)}{\rho_0}.
\end{align*}
Similarly, $f_{1b}(t)=\rho(t)\sin t+d_b(t)m_b(t)$, $d_b(0)=d_0$, $m_b(0)=(0,1)$, and we get
$f_{1b}'(0)=\rho_0+d_0\,m_{1b}'(0)$, with
$$m_{1b}'(0)=1-\dfrac{n_b-1}{n_b}\dfrac{\rho''(0)}{\rho_0}.$$

Suppose by contradiction that  $f_{1b}'(0)=0$. Then by \eqref{eq:first components derivatives equal} $f_{1r}'(0)=0$. 
Hence from the calculations above $m_{1b}'(0)=m_{1r}'(0)=-\rho_0/d_0$. Since $n_r\neq n_b$, $m_{1b}'(0)=m_{1r}'(0)$ implies $\rho''(0)=0$. So $$m_{1b}'(0)=m_{1r}'(0)=1=-\rho_0/d_0<0,$$
a contradiction.
\end{proof}

Since $z_1=\p$ we then conclude that $f_{1b}'(z_1(t))\neq 0$ in a neighborhood of $t=0$, and 
$$z_1'(t)=\varphi'(t)=\dfrac{f_{1r}'(t)}{f_{1b}'(z_1(t))}.$$
Applying formula \eqref{eq:tildeF1r} for both $b$ and $r$ yields
\begin{align}
z_1'(t)=\varphi'(t)&=\dfrac{\widetilde{F}_{1r}(t,Z(t),Z'(t))}{\widetilde{F}_{1b}(z_1(t),Z(z_1(t)),Z'(z_1(t)))}:=h_1(t;Z(t),Z(z_1(t));Z'(t),Z'(z_1(t)))\notag
\end{align}
with
\begin{equation}\label{eq:formula for h1}
h_1(t;\zeta^0,\zeta^1;\xi^0,\xi^1)=\dfrac{\widetilde F_{1r}(t,\zeta^0,\xi^0)}{\widetilde F_{1b}(\zeta^0_1,\zeta^1,\xi^1)};
\end{equation}
$\widetilde{F}_{1r}$ is defined explicitly in \eqref{eq:derivativeformula}, and $\widetilde{F}_{1b}$ has a similar expression with $n_r$ replaced by $n_b$.

We next verify that $h_1$ is smooth in a neighborhood of $\mathcal Z=(0;{\bf 0,0};Z'(0),Z'(0))$; $Z'(0)=\(\p'(0),0,\rho_0,-\rho''(0)+\rho_0,0\)$. From \eqref{eq:Formulas 1}, $\mathcal A_r$ is smooth in a neighborhood of ${\bf 0}\in R^5$, $\mu_r,\tau_r, D_r, F_{1r}, F_{2r}, \Lambda_{2,r}$ are smooth in a neighborhood of $(0,{\bf 0})\in \R^6$. Also, from \eqref{eq:derivativeformula}, $\widetilde A_{r}$ is smooth in a neighborhood of $({\bf 0,Z'(0)})\in R^{10}$, and  $\widetilde{\mu}_r,\widetilde{\tau}_r, \widetilde{D}_r, \widetilde{F}_{1r}, \widetilde{F}_{2r}, \widetilde{\Lambda}_{2,r}$ are smooth in a neighborhood of $(0,\bf{0,Z'(0)})\in R^{11}$. Similarly, we have the same smoothness for the functions corresponding to $n_b$.
Therefore, from \eqref{eq:formula for h1},  to show that $h_1$ is smooth in a neighborhood of $\mathcal Z$, it is enough to prove that $\widetilde{F}_{1b}(0,{\bf 0},Z'(0))\neq 0$. 
In fact, since $Z(0)={\bf 0}$ we obtain from \eqref{eq:tildeF1r} for $n_b$ and the claim above that
$$\widetilde {F}_{1b}\(0,{\bf 0},Z'(0)\)=\widetilde{F}_{1b}\(0, Z(0),Z'(0)\)= f_{1b}'(0)\neq 0.$$

%
%
%
%
%

\textbf{Calculation of $h_2$ and $h_3$.}
We have 
\begin{equation}\label{eq:z2'}
z_2'(t)=v_1'(t)=z_4(t):=h_2(t;Z(t),Z(z_1(t));Z'(t),Z'(z_1(t)))
\end{equation}
with,
\begin{equation}\label{eq:formula for h2}
h_2(t;\zeta^0,\zeta^1;\xi^0,\xi^1)=\zeta^0_4.
\end{equation}

Similarly, 
\begin{equation}\label{eq:z3'}
z_3'(t)=v_2'(t)=z_5(t)+\rho_0:=h_3(t;Z(t),Z(z_1(t));Z'(t),Z'(z_1(t)))
\end{equation}
with
\begin{equation}\label{eq:formula for h3}
h_3(t;\zeta^0,\zeta^1;\xi^0,\xi^1)=\zeta^0_5+\rho_0.
\end{equation}
Trivially, $h_2$ and $h_3$ are smooth everywhere.
%

\textbf{Calculation of $h_4$.}
The rays $m_{r}(t)$ and $m_{b}(\varphi(t))$ are both refracted into $e=(0,1)$ at $f_r(t)$ (since Problem B is solvable $f_r$ has a normal vector), then by the Snell law
\begin{align*}
m_r(t)-\dfrac{1}{n_r}e&=\lambda_{2,r}(t)\nu_S(t)\\
m_b(\varphi(t))-\dfrac{1}{n_b}e&=\lambda_{2,b}(\varphi(t))\nu_S(t)
\end{align*}
with $\nu_S$ the outer unit normal to $S=\{f_r(D)\}$.
%
%
If $\alpha_S$ denotes the first component of $\nu_S$, then
\begin{align*}
m_{1r}(t)=\lambda_{2,r}(t)\alpha_S(t),\qquad 
m_{1b}(\varphi(t))=\lambda_{2,b}(\varphi(t))\alpha_S(t).
\end{align*}
Solving in $\alpha_S(t)$ and using \eqref{eq:m1r} and \eqref{eq:lambdar} yields
$$\mu_r(t,Z(t))\,\Lambda_{2,b}(z_1(t),Z(z_1(t)))=\mu_b(z_1(t),Z(z_1(t)))\,\Lambda_{2,r}(t,Z(t)).
$$
Differentiating the last expression with respect to $t$, Remark \ref{rmk:derivative} yields
\begin{align*}
&\tilde \mu_r(t,Z,Z')\,\Lambda_{2,b}(z_1,Z(z_1))
+z_1'\,\widetilde{\Lambda}_{2,b}(z_1,Z(z_1),Z'(z_1))\,\mu_r(t,Z)\\
&\qquad =z_1'\,\widetilde{\mu}_b(z_1,Z(z_1),Z'(z_1))\,\Lambda_{2,r}(t,Z)+\mu_b(z_1,Z(z_1))\,\widetilde{\Lambda}_{2,r}(t,Z,Z').
\end{align*}
Replacing \eqref{eq:tildemur} in the above expression
\begin{align*}
&\dfrac{1}{n_r}\left(\cos t -\mathcal A_r(Z)\,z_4'-\widetilde{\mathcal A}_r(Z,Z')\,z_4\right)\Lambda_{2,b}(z_1,Z(z_1))\\
&\quad=z_1'\left[\widetilde{\mu}_b(z_1,Z(z_1),Z'(z_1))\Lambda_{2,r}(t,Z)-\widetilde{\Lambda}_{2,b}(z_1,Z(z_1),Z'(z_1))\mu_r(t,Z)\right]+\mu_b(z_1,Z(z_1))\widetilde{\Lambda}_{2,r}(t,Z,Z').
\end{align*}
From \eqref{eq:Anr} and \eqref{eq:lambdar}, $\mathcal A_{r}(Z)\Lambda_{2,b}(z_1,Z(z_1))<0$. Then solving the last equation in $z_4'$ yields
{\tiny
\begin{align}
\hspace{-2.5cm} 
z_4'&=-\dfrac{z_1'\left[\widetilde{\mu}_b(z_1,Z(z_1),Z'(z_1))\Lambda_{2,r}(t,Z)-\widetilde{\Lambda}_{2,b}(z_1,Z(z_1),Z'(z_1))\mu_{r}(t,Z)\right]+\mu_b(z_1,Z(z_1))\widetilde {\Lambda}_{2,r}(t,Z,Z')-\dfrac{1}{n_r}\left(\cos t-\widetilde{\mathcal A}_r(Z,Z')\,z_4\right)\Lambda_{2,b}(z_1,Z(z_1))}{\dfrac{1}{n_r}\mathcal A_r(Z)\Lambda_{2,b}(z_1,Z(z_1))}\label{eq:z4'}\\
&:=h_4(t;Z(t),Z(z_1(t));Z'(t),Z'(z_1(t))),\notag
\end{align}
}
so
{\tiny
\begin{align}\label{eq:formula for h4}
h_4(t;\zeta^0,\zeta^1;\xi^0,\xi^1)&=-\dfrac{\xi^0_1\left[\widetilde \mu_b(\zeta^0_1,\zeta^1,\xi^1)\Lambda_{2,r}(t,\zeta^0)-\widetilde{\Lambda}_{2,b}(\zeta^0_1,\zeta^1,\xi^1)\mu_r(t,\zeta^0)\right]+\mu_b(\zeta^0_1,\zeta^1)\widetilde {\Lambda}_{2,r}(t,\zeta^0,\xi^0)-\dfrac{1}{n_r}\left(\cos t-\widetilde{\mathcal A}_r(\zeta^0,\xi^0)\zeta^0_4\right)\Lambda_{2,b}(\zeta^0_1,\zeta^1)}{\dfrac{1}{n_r}\mathcal A_r(\zeta^0)\Lambda_{2,b}(\zeta^0_1,\zeta^1)}.
\end{align}
}
%
$h_4$ is smooth in a neighborhood of $\mathcal Z$, since $\mathcal A_{r}({\bf 0})\Lambda_{2,b}(0,{\bf 0})<0,$ and as shown before all the functions appearing in the expression for $h_4$ are smooth in a neighborhood of $\mathcal Z$ from the comments after \eqref{eq:Formulas 1} and \eqref{eq:derivativeformula}.
\\ \\

\textbf{Calculation of $h_5$.} We have $v_2(t)=-(\tan t) \,v_1(t)$.
Differentiating twice we get
$$v_2''(t)=-(\tan t) \,v_1''(t)-2\,\dfrac{v_1'(t)}{\cos ^2 t}-2\dfrac{\sin t}{\cos ^3 t}\,v_1(t).$$
Then
\begin{align}
z_5'(t)&=-(\tan t) \,z_4'(t)-\dfrac{2}{\cos ^2 t}\,z_4(t)-\dfrac{2 \sin t}{\cos ^3 t}\,(z_2(t)-\rho_0)\label{eq:z5'}  \\
&:=h_5(t;Z(t),Z(z_1(t));Z'(t),Z'(z_1(t)))\notag,
\end{align}
and so
\begin{equation}\label{eq:h5formula}
h_5(t;\zeta^0,\zeta^1;\xi^0,\xi^1)=-(\tan t)\,\xi^0_4-\dfrac{2}{\cos ^2 t}\,\zeta^0_4-\dfrac{2\sin t}{\cos^3 t}\,(\zeta^0_2-\rho_0).
\end{equation}

Since $t\in D\subset(-\pi/2,\pi/2)$, then $h_5$ is smooth in $D\times \R^{20}$.

The proof of Theorem \ref{thm:Problem B implies system} is then complete.
\end{proof}

\subsection{Solutions of \eqref{eq:Optic System} yield local solutions to the optical problem.} \label{subsec:converse functional implies optic}
In this section, we show how to obtain a local solution to Problem B by solving the system of functional differential equations \eqref{eq:Optic System}.
\begin{theorem}\label{thm:Converse}
Let $\rho_0,d_0>0$ be given, and suppose that $H$ is the corresponding map defined in Theorem \ref{thm:Problem B implies system}. Assume $P=(p_1,\cdots,p_n)$ is a solution to the system 
$$P=H(0;{\bf 0,0};P,P),$$
such that $H$ is smooth in a neighborhood of $\mathcal P:=(0;{\bf 0,0};P,P)$, and
\begin{equation}\label{eq:additional}
0<|p_1|\leq 1.
\end{equation}

Let $Z(t)=(z_1(t),\cdots,z_5(t))$ be a $C^1$ solution to the system \eqref{eq:Optic System} in some open interval $I$ containing $0$, with $Z'(0)=P$ and 
\begin{equation}\label{eq:condition on z1 point source}
|z_1(t)|\leq |t|.
\end{equation}
Define 
\begin{equation}\label{eq:rho def}
\rho(t)=-\dfrac{z_2(t)-\rho_0}{\cos t},
\end{equation} 
and $\varphi(t)=z_1(t)$.

Then there is $\delta>0$ sufficiently small, so that $\varphi:[-\delta,\delta]\to [-\delta,\delta]$ and 
$$f_r(t)=f_b(\varphi(t)).$$ 
Here,
$f_r(t)=\rho(t)x(t)+d_r(t)m_r(t)$, and $f_b(t)=\rho(t)x(t)+d_b(t)m_b(t)$
where
\[d_r(t)=\dfrac{C_r-\rho(t)(1-\cos t)}{n_r-e\cdot m_r(t)},\qquad d_b(t)=\dfrac{C_b-\rho(t)(1-\cos t)}{n_b-e\cdot m_b(t)}
\]
with $C_r=(n_r-1)\,d_0$, $C_b=(n_b-1)\,d_0$; $m_r(t)$ and $m_b(t)$ are the refracted directions of the rays $x(t)$ by the curve $\rho(t)x(t)$ corresponding to each color r and b.
In addition, for $t\in [-\delta,\delta]$, $f_r$ and $f_b$ have normal vectors for every $t$ and
\begin{equation}\label{eq:positivity and internal reflection conditions}
\rho(t)>0,\quad d_r(t),d_b(t)>0,\quad m_r(t)\cdot e\geq 1/n_r,\quad m_b(t)\cdot e\geq1/n_b.
\end{equation}
\end{theorem}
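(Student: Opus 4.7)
The plan is to reverse the construction in Theorem \ref{thm:Problem B implies system}: given $Z$ solving \eqref{eq:Optic System}, recover $\rho$ from \eqref{eq:rho def} and verify that the monochromatic lenses $f_r,f_b$ built from $\rho$ via \eqref{eq:formula for dbt point source} satisfy $f_r(t)=f_b(\varphi(t))$. Since $Z(0)=\mathbf 0$, $\rho(0)=\rho_0>0$, so $\rho\in C^2$ is positive on some interval $[-\delta,\delta]$. The first critical step is the consistency identity $z_3(t)=\rho(t)\sin t$, which makes $(z_2-\rho_0,z_3)=(v_1,v_2)=(-\rho\cos t,\rho\sin t)$ agree with the polar parametrization. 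For this, set $\psi(t)=z_3(t)+\tan(t)(z_2(t)-\rho_0)$; using $z_2'=z_4$ and $z_3'=z_5+\rho_0$ from \eqref{eq:formula for h2}--\eqref{eq:formula for h3} gives $\psi(0)=\psi'(0)=0$, and a direct computation using \eqref{eq:h5formula} shows $\psi''(t)\equiv 0$, hence $\psi\equiv 0$.

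With $\rho$ in hand and $z_3=\rho\sin t$ established, the functions in \eqref{eq:Formulas 1} evaluated at $(t,Z(t))$ coincide component by component with the physical quantities $A_r,m_{1r},m_{2r},d_r,f_{1r},f_{2r},\lambda_{2,r}$ built from $\rho$ by the Snell law (and analogously for $b$). At $t=0$: $\nu_\rho(0)=e$, $m_r(0)=m_b(0)=e$, $d_r(0)=d_b(0)=d_0>0$, and $m_r(0)\cdot e=m_b(0)\cdot e=1$; by continuity all conditions of \eqref{eq:positivity and internal reflection conditions} hold on $[-\delta,\delta]$ for $\delta$ small. Existence of normals on $f_r,f_b$ reduces to $f_r'(0),f_b'(0)\neq 0$: smoothness of $H$ at $\mathcal P$ requires $\widetilde F_{1b}(0,\mathbf 0,P)\neq 0$ (denominator of \eqref{eq:formula for h1}), and then $h_1(\mathcal P)=p_1$ together with $|p_1|>0$ yields $\widetilde F_{1r}(0,\mathbf 0,P)=p_1\,\widetilde F_{1b}(0,\mathbf 0,P)\neq 0$; both quantities remain non-zero by continuity.

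The core identity $f_r(t)=f_b(\varphi(t))$ I would prove in two stages. First introduce
\[
\eta(t)=\mu_r(t,Z(t))\,\Lambda_{2,b}(z_1(t),Z(z_1(t)))-\mu_b(z_1(t),Z(z_1(t)))\,\Lambda_{2,r}(t,Z(t)),
\]
so that $\eta(0)=0$ since $\mu_r(0,\mathbf 0)=\mu_b(0,\mathbf 0)=0$. The derivation of \eqref{eq:formula for h4} in Theorem \ref{thm:Problem B implies system} shows that $z_4'=h_4$ is precisely the equation $\eta'(t)=0$ solved for $z_4'$, so $\eta\equiv 0$. Writing the unit Snell-normals as $\nu_{S_r}(t)=(m_r(t)-e/n_r)/\Lambda_{2,r}(t,Z)$ and $\nu_{S_b}(\varphi(t))=(m_b(\varphi(t))-e/n_b)/\Lambda_{2,b}(\varphi,Z(\varphi))$, the identity $\eta=0$ equates their first components; since both are continuous unit vectors with second components equal to $+1$ at $t=0$, continuity forces $\nu_{S_r}(t)=\nu_{S_b}(\varphi(t))$ on $[-\delta,\delta]$. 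Because the monochromatic construction of \cite{gutierrez:asphericallensdesign} makes $f_r'$ and $f_b'$ tangent to their respective upper surfaces, both $f_r'(t)$ and $f_b'(\varphi(t))$ are perpendicular to this common unit normal and hence parallel. The scalar identity $\widetilde F_{1r}=z_1'\,\widetilde F_{1b}$ from \eqref{eq:formula for h1} then upgrades parallelism to $f_r'(t)=\varphi'(t)\,f_b'(\varphi(t))$; integrating from $t=0$, where $f_r(0)=(0,\rho_0+d_0)=f_b(\varphi(0))$, gives $f_r=f_b\circ\varphi$ on $[-\delta,\delta]$. The inclusion $\varphi([-\delta,\delta])\subseteq[-\delta,\delta]$ is immediate from \eqref{eq:condition on z1 point source}.

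The main obstacle is bridging the single scalar identity $\eta\equiv 0$ (which is all that $h_4$ directly delivers) to the full vector equality $f_r=f_b\circ\varphi$. The key geometric observation is that $\eta=0$ forces the entire unit Snell-normals $\nu_{S_r}(t)$ and $\nu_{S_b}(\varphi(t))$ to coincide, not merely their first components; this forces the tangents to be parallel, after which $h_1$ promotes parallelism to exact velocity equality. The secondary technical point is the bookkeeping, via Remark \ref{rmk:derivative}, that the time-derivatives of $\mu_r,\tau_r,\Lambda_{2,r}$ along $t\mapsto(t,Z(t))$ really do produce the tilded objects of \eqref{eq:derivativeformula}, so that $\eta'$ can be explicitly identified with the equation $z_4'=h_4$.
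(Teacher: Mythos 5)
Your proposal is correct and follows essentially the same route as the paper's proof: recover $\rho$, establish $z_3=\rho\sin t$ from the $h_3,h_5$ equations, identify the functions of \eqref{eq:Formulas 1} along $Z$ with the physical quantities, integrate the $h_4$ relation to get collinearity of $m_r(t)-e/n_r$ and $m_b(\varphi(t))-e/n_b$, and combine this with the orthogonality of the tangents and the $h_1$ relation to upgrade to $f_r'=\varphi'\,(f_b'\circ\varphi)$ and integrate. Your two reorganizations — proving $z_3=\rho\sin t$ via $\psi''\equiv 0$ rather than two direct integrations, and deducing equality of the unit Snell-normals from equal first components plus sign continuity instead of the paper's squaring argument in Step \ref{eq:Colinearity} — are equivalent to, and if anything slightly cleaner than, what the paper does.
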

Notice that \eqref{eq:positivity and internal reflection conditions} implies the lens defined with $\rho(t)x(t)$, $f_r$ and $f_b$ is well defined, and moreover total internal reflection is avoided.

\begin{proof}
We obtain the theorem by proving a series of steps.


\begin{step}\label{clm:z3 and rho}
If $\rho$ is from \eqref{eq:rho def}, then
\begin{equation}\label{eq:z3 form}
z_3(t)=\rho(t)\sin t.
\end{equation}
\end{step}

\begin{proof}
Since $z_2(0)=0$, from \eqref{eq:rho def} 
\begin{equation}\label{eq:rho at 0}
\rho(0)=\rho_0.
\end{equation}
Since $Z$ is a solution to \eqref{eq:Optic System}, $z_2'(t)=z_4(t)$ by the definition of $h_2$ in \eqref{eq:formula for h2}. Hence and from the definition of $h_5$ in \eqref{eq:h5formula}, 
\begin{align*}
z_5'(t)&=-(\tan t)\,z_4'(t)-\dfrac{2}{\cos ^2 t}\,z_4(t)-\dfrac{2\sin t}{\cos ^3 t}\,(z_2(t)-\rho_0)\\
&=-(\tan t)\,z_2''(t)-\dfrac{2}{\cos ^2 t}\,z_2'(t)-\dfrac{2\sin t}{\cos ^3 t}\,(z_2(t)-\rho_0).
\end{align*}
By \eqref{eq:rho def}, we have $z_2(t)=-\rho(t)\cos t+\rho_0$, then
\begin{align*}
z_2'(t)&=-\rho'(t)\cos t+\rho(t)\sin t\\
z_2''(t)&=-\rho''(t)\cos t+2\rho'(t)\sin t+\rho(t)\cos t.
\end{align*}
Replacing in the formula for $z_5'$ we get 
\begin{align*}
z_5'(t)&=(\tan t)(\rho''(t)\cos t-2\rho'(t)\sin t-\rho(t)\cos t)+\dfrac{2}{\cos ^2 t}(\rho'(t)\cos t-\rho(t)\sin t)+\dfrac{2\sin t}{\cos^3t}\rho(t)\cos t\\
&=\rho''(t)\sin t+2\rho'(t)\cos t -\rho(t)\sin t\\
&=\dfrac{d^2}{dt^2}\left[\rho(t)\sin t\right].
\end{align*}
Hence
$z_5(t)=\dfrac{d}{dt}\left[\rho(t)\sin t\right]+c=\rho'(t)\sin t+\rho(t)\cos t+c.$
Since $z_5(0)=0$, then by \eqref{eq:rho at 0}  $c=-\rho_0$, and therefore
\begin{equation}\label{eq:z5 form}
z_5(t)=\rho'(t)\sin t+\rho(t)\cos t-\rho_0.
\end{equation}
By the definition of $h_3$ in \eqref{eq:formula for h3}, we have 
$$z_3'(t)=z_5(t)+\rho_0=\rho'(t)\sin t+\rho(t)\cos t=\dfrac{d}{dt}\left[\rho(t)\sin t\right],$$
and since $z_3(0)=0$, we conclude \eqref{eq:z3 form}.
\end{proof}

\begin{step}\label{clm:First Composition}
For each $t\in I$
$$F_{1r}(t,Z(t))=F_{1b}(z_1(t),Z(z_1(t))),$$
with $F_{1r}, F_{1b}$ from \eqref{eq:Formulas 1}.
\end{step}

\begin{proof}
From the definition of $h_1$ in \eqref{eq:formula for h1}
$$\widetilde{F}_{1r}(t,Z(t),Z'(t))=z_1'(t)\widetilde{F}_{1b}(z_1(t),Z(z_1(t)),Z'(z_1(t))).$$
Hence by Remark \ref{rmk:derivative}, integrating the above equality yields 
$$F_{1r}(t,Z(t))=F_{1b}(z_1(t),Z(z_1(t)))+c.$$
Since $Z(0)={\bf 0}$, then from the formulas of $F_{1r}$ and $\mu_r$ in \eqref{eq:Formulas 1} we get
$F_{1r}(0,{\bf 0})=F_{1b}(0,{\bf 0})=0$. Hence $c=0$ and Step \ref{clm:First Composition} follows.
\end{proof}

\begin{step}\label{step:choice of delta for total reflection}
There is $\delta>0$ small so that \eqref{eq:positivity and internal reflection conditions} holds.
\end{step}
\begin{proof}
Since $\rho(0)=\rho_0>0$, by continuity there is $\delta>0$ with $[-\delta,\delta]\subseteq I$ so that $\rho(t)$ given by \eqref{eq:rho def} is positive for $t\in [-\delta,\delta]$. 
Rays with colors r and b emitted from the origin with direction $x(t)=(\sin t,\cos t)$ are refracted at $\rho(t)x(t)$ into the directions $m_r(t)$, and $m_b(t)$. 
For the upper faces $f_r$ and $f_b$ to be able to refract the rays $m_r$ and $m_b$ into $e$, they need to have a normal vector for each $t$, $d_r(t)$ and $d_b(t)$ must be positive for $t\in [-\delta,\delta]$, and $m_r$ and $m_b$ must satisfy the conditions $m_r(t)\cdot e\geq  \dfrac{1}{n_r}$, $m_b(t)\cdot e\geq  \dfrac{1}{n_b}$ to avoid total reflection. 
Notice that from \eqref{eq:rho def} 
$$\rho'(t)=-\dfrac{z_2'(t)\cos t+(z_2(t)-\rho_0)\sin t}{\cos ^2 t}.$$
From the definition of $h_2$ in \eqref{eq:formula for h2}, and since $z_4(0)=0$ we conclude that $\rho'(0)=0$. Hence from Lemma \ref{lm:first normal}, the normal to $\rho$ at $\rho(0)(0,1)$ is $\nu(0)=(0,1)$. Since $x(0)=\nu(0)=(0,1)$ we obtain from Snell's law that 
\begin{equation}\label{eq:normal refraction}
m_r(0)=m_b(0)=(0,1).
\end{equation}
Thus, $e\cdot m_r(0)=e\cdot m_b(0)=1$, and so
\begin{equation}\label{eq:initial d}
d_r(0)=\dfrac{C_r}{n_r-1}=d_0=\dfrac{C_b}{n_b-1}=d_b(0)>0.
\end{equation}
So choosing $\delta$ small we get $d_r(t), d_b(t)>0$, $e\cdot m_r(t)\geq \dfrac{1}{n_r}$ and $e\cdot m_b(t)\geq \dfrac{1}{n_b}$  in $[-\delta,\delta]$.
Therefore, \eqref{eq:positivity and internal reflection conditions} holds for $\delta>0$ sufficiently small. The fact that $f_r$ and $f_b$ have a normals for every $t$ will be proved in Step \ref{clm:regular points}.
\end{proof}

\begin{step}\label{eq:Identity Composition}
For each $t\in [-\delta,\delta]$
$$f_r(t)=\left(F_{1r}(t,Z(t)),F_{2r}(t,Z(t))\right),$$ and similarly $f_b(t)=\left(F_{1b}(t,Z(t)),F_{2b}(t,Z(t))\right)$.
\end{step}

\begin{proof}
We first show that
\begin{equation}\label{eq:mr recover}
m_r(t)=\(\mu_r(t,Z(t)),\tau_r(t,Z(t))\),
\end{equation}
and similarly for $m_b$. Since the direction $x(t)=(\sin t, \cos t)$ is refracted by $\rho$ into the unit direction $m_r(t)$, 
by \eqref{eq:Snelllaw}
$$
m_r(t)=\dfrac{1}{n_r}\left[x(t)-A_r(v(t),v'(t))\,v'(t)\right]
$$
with $A_r$ given in \eqref{eq:Anr formula} and $v(t)=(-\rho(t)\cos t, \rho(t)\sin t)$. 
From \eqref{eq:rho def} and \eqref{eq:z3 form}, 
$$v(t)=\left(z_2(t)-\rho_0,z_3(t)\right),\qquad v'(t)=\left(z_2'(t),z_3'(t)\right).$$
However, by the definition of $h_2$ in \eqref{eq:formula for h2} and the definition of $h_3$ in \eqref{eq:formula for h3}, we have $z_2'=z_4$ and $z_3'=z_5+\rho_0$, then $v'(t)=\(z_4,z_5+\rho_0\)$.
Hence \eqref{eq:Anr formula} becomes
$$
A_r(v(t),v'(t))=\dfrac{1-n_r^2}{\left|(z_2(t)-\rho_0,z_3(t))\right|+\sqrt{\left|(z_2(t)-\rho_0,z_3(t))\right|^2+(n_r^2-1)\left|(z_4(t),z_5(t)+\rho_0)\right|^2}},
$$
and so by \eqref{eq:Formulas 1}, $A_r(v(t),v'(t))=\mathcal A_r(Z(t)).$  

We conclude that
$$
m_r(t)=\dfrac{1}{n_r}\left[(\sin t,\cos t)-\mathcal A_r(Z(t))\left(z_4(t),z_5(t)+\rho_0\right)\right],
$$
and hence once again from \eqref{eq:Formulas 1}, the identity \eqref{eq:mr recover} follows.

We now show that $d_r(t)=D_r(t,Z(t))$. In fact, by definition of $v$ we have 
$$\rho(t)=|v(t)|=\left|\left(z_2(t)-\rho_0,z_3(t)\right)\right|.$$
Then by \eqref{eq:mr recover}, \eqref{eq:rho def} and the formula for $D_r$ in \eqref{eq:Formulas 1} we get
\begin{align*}
d_r(t)=\dfrac{C_r-\rho(t)+\rho(t)\cos t}{n_r-m_{2r}(t)}=\dfrac{C_r-\left|(z_2(t)-\rho_0,z_3(t))\right|-z_2(t)+\rho_0}{n_r-\tau_r(t,Z(t))}=D_r(t,Z(t)).
\end{align*}
Hence from \eqref{eq:rho def}, \eqref{eq:z3 form}, \eqref{eq:mr recover}, and the formulas of $F_{1r},F_{2r}$ in \eqref{eq:Formulas 1}, we conclude 
\begin{align*}
f_r(t)&=\rho(t)(\sin t,\cos t)+d_{r}(t)m_r(t)\\
&=\left(z_3(t),-z_2(t)+\rho_0\right)+D_r(t,Z(t))\left(\mu_r(t,Z(t)),\tau_r(t,Z(t))\right)\\
&=\left(F_{1r}(t,Z(t)),F_{2r}(t,Z(t))\right).
\end{align*}
A similar result holds for $f_b$.
\end{proof}

%


By assumption $|z_1(t)|\leq |t|$ for $t\in I$, then $z_1(t)\in [-\delta,\delta]$ for every $t\in [-\delta, \delta]$.

\begin{step}\label{clm:regular points}
For $\delta$ small, $f_{1r}'(t)\neq 0$ and $f_{1b}'(t)\neq 0$, and hence $f_r$ and $f_b$ have normal vectors for each $t\in [-\delta,\delta]$.
\end{step}

\begin{proof}
By continuity of $f_{1r}'$ and $f_{1b}'$, it is enough to show that $f_{1r}'(0)\neq 0$ and $f_{1b}'(0)\neq 0$. 
Since $H$ is well defined at $\mathcal P$, then from the definition of $h_1$ in \eqref{eq:formula for h1}, $\widetilde {F}_{1b}(0,{\bf 0},P)\neq 0$. We have $Z'(0)=P$, then from Remark \ref{rmk:derivative} and Step \ref{eq:Identity Composition}
$$f_{1b}'(0)=\widetilde{F}_{1b}(0,{\bf 0},Z'(0))=\widetilde{F}_{1b}(0,{\bf 0},P)\neq 0.$$
We next prove that $f_{1r}'(0)\neq 0$. From Steps \ref{clm:First Composition} and \ref{eq:Identity Composition}, $f_{1r}(t)=f_{1b}(z_1(t))$. Differentiating and letting $t=0$ yields
$$f_{1r}'(0)=z_1'(0) f_{1b}'(0).$$
Since $f_{1b}'(0)\neq 0$ and from \eqref{eq:additional} $z_1'(0)=p_1\neq 0$, we obtain $f_{1r}'(0)\neq 0$.
\end{proof}

\begin{step}\label{eq:Colinearity}
The vectors
$m_r(t)-\dfrac{1}{n_r}e$ and $m_b(z_1(t))-\dfrac{1}{n_b}e$ are colinear for every $t\in [-\delta,\delta]$.
\end{step}

\begin{proof}
Since $z_4'(t)=h_4(t;Z(t),Z(z_1(t));Z'(t),Z'(z_1(t)))$, from \eqref{eq:formula for h4} it follows that 
\begin{align*}
\dfrac{1}{n_r}&\left(\cos t -\mathcal A_r(Z(t))z_4'(t)-\widetilde{\mathcal A}_r(Z(t),Z'(t))z_4(t)\right)\Lambda_{2,b}(z_1(t),Z(z_1(t)))=\\
 &\qquad z_1'(t)\left[\widetilde{\mu}_{b}(z_1(t),Z(z_1(t)),Z'(z_1(t)))\Lambda_{2,r}(t,Z(t))-\widetilde{\Lambda}_{2,b}(z_1(t),Z(z_1(t)),Z'(z_1(t)))\mu_{r}(t,Z(t))\right]\\
 &\qquad +\mu_{b}(z_1(t),Z(z_1(t)))\widetilde{\Lambda}_{2,r}(t,Z(t),Z'(t)).
\end{align*}
Hence from the formula of $\widetilde \mu_r$ in \eqref{eq:derivativeformula}, we obtain
\begin{align*}
\widetilde{\mu}_r&(t,Z(t),Z'(t))\,\Lambda_{2,b}(z_1(t),Z(z_1(t)))+z_1'(t)\,\widetilde{\Lambda}_{2,b}(z_1(t),Z(z_1(t)))\,\mu_r(t,Z(t))\\
&\qquad=z_1'(t)\,\widetilde{\mu}_b(z_1(t),Z(z_1(t)))\,\Lambda_{2,r}(t,Z(t))+\mu_b(z_1(t),Z(z_1(t)))\,\widetilde{\Lambda}_{2,r}(t,Z(t),Z'(t)).
\end{align*}
Integrating the resulting identity using Remark \ref{rmk:derivative}, and that $\mu_{r}(0,{\bf 0})=\mu_{b}(0,{\bf 0})=0$ from \eqref{eq:Formulas 1}, we obtain
\begin{equation}\label{eq:First Component bis}
\mu_r(t,Z(t))\,\Lambda_{2,b}\(z_1(t),Z(z_1(t))\)=\mu_b\(z_1(t),Z(z_1(t))\)\,\Lambda_{2,r}(t,Z(t)).
\end{equation}
On the other hand, from \eqref{eq:mr recover}
\begin{equation}\label{eq:unit} 
\mu_r(t,Z(t))^2+\tau_r(t,Z(t))^2=1,
\end{equation}
then from \eqref{eq:Formulas 1}, $\Lambda_{2,r}$ can be written as follows
{\small
\begin{align*}
\Lambda_{2,r}(t,Z(t))&=\sqrt{1+\dfrac{1}{n_r^2}-\dfrac{2}{n_r}\tau_r(t,Z(t))}=\sqrt{1-\tau_r(t,Z(t))^2+\left(\dfrac{1}{n_r}-\tau_r(t,Z(t))\right)^2}\\
&=\sqrt{\mu_r(t,Z(t))^2+\left(\dfrac{1}{n_r}-\tau_r(t,Z(t))\right)^2},
\end{align*}
and similarly for $\Lambda_{2,b}$.}
Squaring \eqref{eq:First Component bis} and using the above identity for $n_r$ and $n_b$ we get
\begin{align*}
&\mu_r(t,Z(t))^2\left[\mu_b(z_1(t),Z(z_1(t)))^2+\left(\dfrac{1}{n_b}-\tau_b(z_1(t),Z(z_1(t))\right)^2\right]\\
&=\mu_b(z_1(t),Z(z_1(t)))^2\left[\mu_r(t,Z(t))^2+\left(\dfrac{1}{n_r}-\tau_r(t,Z(t))\right)^2\right].
\end{align*}
Hence 
 $$
 \mu_r(t,Z(t))^2 \left(\dfrac{1}{n_b}-\tau_b(z_1(t),Z(z_1(t)))\right)^2=\mu_b(z_1(t),Z(z_1(t)))^2\left(\dfrac{1}{n_r}-\tau_r(t,Z(t))\right)^2,
$$
and taking square roots
$$ \left| \mu_r(t,Z(t)) \left(\dfrac{1}{n_b}-\tau_b(z_1(t),Z(z_1(t)))\right) \right| =\left |\mu_b(z_1(t),Z(z_1(t)))\left(\dfrac{1}{n_r}-\tau_r(t,Z(t))\right)\right|.$$
From \eqref{eq:mr recover}, since $\delta$ is chosen so that \eqref{eq:positivity and internal reflection conditions} holds in $[-\delta,\delta]$, and $z_1(t)\in [-\delta,\delta]$, we have that  $1/n_b-\tau_b(z_1(t),Z(z_1(t)))=1/n_b-e\cdot m_b(z_1(t))\leq 0$, and $1/n_r-\tau_r(t,Z(t))=1/n_r-e\cdot m_r(t)\leq 0$. Moreover, since the functions $\Lambda_{2,b}$ and $\Lambda_{2,r}$ are both positive, then by \eqref{eq:First Component bis} $\mu_r(t,Z(t))$ and $\mu_b(z_1(t),Z(z_1(t)))$ have the same sign obtaining
$$ \mu_r(t,Z(t)) \left(\tau_b(z_1(t),Z(z_1(t)))-\dfrac{1}{n_b}\right)  =\mu_b(z_1(t),Z(z_1(t)))\left(\tau_r(t,Z(t))-\dfrac{1}{n_r}\right).$$
We conclude that the vectors
$$\left( \mu_r(t,Z(t)),\tau_r(t,Z(t))-\dfrac{1}{n_r}\right),\quad \left(\mu_b\(z_1(t),Z(z_1(t))\),\tau_b\(z_1(t),Z(z_1(t))\)-\dfrac{1}{n_b}\right)$$
are colinear and the claim follows from \eqref{eq:mr recover}.
\end{proof}

By Steps \ref{clm:First Composition} and \ref{eq:Identity Composition}, we obtain that $f_{1r}(t)=f_{1b}(z_1(t))$, so to conclude the proof of Theorem \ref{thm:Converse},
it remains to show the following.

\begin{step}\label{eq:Second Component}
We have for $|t|\leq \delta$ that
$$f_{2r}(t)=f_{2b}(z_1(t)).$$
\end{step} 
\begin{proof}
From Step \ref{clm:regular points}, for $|t|\leq \delta$, $f_r$ and $f_b$ have normals at every point. From \eqref{eq:positivity and internal reflection conditions} and the definitions of $f_r$ and $f_b$, $m_r(t)$ and $m_b(t)$ are refracted into the direction $e$ at $f_r(t)$ and $f_b(t)$, for each $t$, respectively. So the Snell law at $f_r(t)$ and $f_b(t)$ implies that $m_r(t)-\dfrac{1}{n_r}e$ is orthogonal to the tangent vector $f_r'(t)$, and $m_b(z_1(t))-\dfrac{1}{n_b}e$ is orthogonal to $f_b'(z_1(t))$.
%
%
%
Hence by Step \ref{eq:Colinearity}, $f_r'(t)$ and $f_b'(z_1(t))$ are parallel.
From Remark \ref{rmk:derivative} and Steps  \ref{clm:First Composition} and \ref{eq:Identity Composition} we also obtain that $f_{1r}'(t)=z_1'(t)f_{1b}'(z_1(t))$. 
From Step \ref{clm:regular points} and assumption \eqref{eq:condition on z1 point source}, $f_{1r}'(t)\neq 0$ and $f_{1b}'(z_1(t))\neq 0$ for all $t\in [-\delta,\delta]$, then
$$f_{2r}'(t)=z_1'(t)f_{2b}'(z_1(t)).$$
%
%
Integrating the last identity, we obtain $f_{2r}(t)=f_{2b}(z_1(t))+c.$
On the other hand, $z_1(0)=0$, 
$f_{2r}(0)=\rho(0)+d_r(0)m_{2r}(0)$, and $f_{2b}(0)=\rho(0)+d_b(0)m_{2b}(0)$.
Hence from \eqref{eq:normal refraction} and \eqref{eq:initial d}, $f_{2r}(0)=f_{2b}(0)$ and so $c=0$, and Step \ref{eq:Second Component} follows.

\end{proof}
This completes the proof Theorem \ref{thm:Converse}.
\end{proof}

\begin{remark}\label{rmk:self intersection}\rm
Notice that $f_b$ has no self intersections in the interval $[-\delta,\delta]$.  Because if there would exist $t_1,t_2\in [-\delta,\delta]$ such that
$f_b(t_1)=f_b(t_2)$, then by Rolle's theorem applied to $f_{1b}$ we would get $f_{1b}'(t_0)=0$ for some $t_0\in [t_1,t_2]$, a contradiction with Step \ref{clm:regular points}. The issue of  self-intersections in the monochromatic case is discussed in detail in \cite{gutierrez-sabra:freeformgeneralfields}.
This implies that $f_b$ is injective, and similarly $f_r$ is injective.
We also deduce that $\varphi$ is injective: in fact, if $\varphi(t_1)=\varphi(t_2)$ then $f_r(t_1)=f_b(\varphi(t_1))=f_b(\varphi(t_2))=f_r(t_2)$ and so $t_1=t_2$.
\end{remark}

\subsection{On the solvability of the algebraic system \eqref{eq:system P} }
In this section, we analyze for what values of $\rho_0$, $d_0$ the algebraic system $P=H(0;{\bf 0,0};P,P)$ has a solution, where $H$
is given in Theorem \ref{thm:Problem B implies system}. This analysis will be used to apply Theorem \ref{thm: Existence}, and to decide when Problem B has a local solution. Denote 
\[
k_0=\dfrac{\rho_0}{d_0},\qquad \Delta_r=\dfrac{n_r}{n_r-1},\qquad \Delta_b=\dfrac{n_b}{n_b-1}.
\]

\begin{theorem}\label{thm:Algebraic system}
The algebraic system $P=H(0;{\bf 0,0};P,P)$ has a solution if and only if $k_0\leq \dfrac{(\Delta_r-\Delta_b)^2}{4\Delta_r\Delta_b}$\footnote{This expression is equal to $\(\sqrt{\dfrac{\Delta_r}{\Delta_b}}-\sqrt{\dfrac{\Delta_b}{\Delta_r}}\)^2$.}. 
In case of equality, the system has only one solution $P$ with $|p_1|=1$, and in case of strict inequality the system has two solutions $P$ and $P'$ with $0<|p_1|<1$ and $|p_1'|>1$.
\end{theorem}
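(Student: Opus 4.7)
The approach would be direct algebraic computation. Writing $P = (p_1,\dots,p_5)$, I would begin by reading off the trivial components of the system at $\mathcal{Z} = (0;\mathbf{0},\mathbf{0};P,P)$: from \eqref{eq:formula for h2}, \eqref{eq:formula for h3}, \eqref{eq:h5formula} and the identities $\sin 0 = \tan 0 = 0$, the equations $p_i = h_i(\mathcal{Z})$ for $i=2,3,5$ force
\[
p_2 = 0, \qquad p_3 = \rho_0, \qquad p_5 = 0.
\]
This already reduces the problem to two scalar equations for the two unknowns $p_1, p_4$.

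Next, I would evaluate the auxiliary functions from \eqref{eq:Formulas 1} and \eqref{eq:derivativeformula} at $(0,\mathbf{0},P)$. The key observation is that both inner products defining $\widetilde{\mathcal A}_r(\mathbf{0},P)$ vanish: $(\zeta_2-\rho_0,\zeta_3)\cdot(\xi_2,\xi_3)=(-\rho_0,0)\cdot(0,\rho_0)=0$ and $(\zeta_4,\zeta_5+\rho_0)\cdot(\xi_4,\xi_5)=(0,\rho_0)\cdot(p_4,0)=0$. Consequently most tilde quantities collapse to zero, leaving only
\[
\mathcal A_r(\mathbf{0}) = \tfrac{1-n_r}{\rho_0}, \quad \tau_r(0,\mathbf{0}) = 1, \quad D_r(0,\mathbf{0}) = d_0, \quad \Lambda_{2,r}(0,\mathbf{0}) = 1/\Delta_r,
\]
\[
\widetilde F_{1r}(0,\mathbf{0},P) = \rho_0 + \tfrac{d_0}{n_r} + \tfrac{d_0\,p_4}{\rho_0\,\Delta_r},
\]
and the analogous formulas with $r$ replaced by $b$.

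Using these evaluations, the equation $p_4 = h_4(\mathcal{Z})$ reduces to a linear relation
\[
p_4(1-p_1) = \rho_0\bigl[(\Delta_r-1) - p_1(\Delta_b-1)\bigr].
\]
Since $\Delta_r \neq \Delta_b$, the case $p_1 = 1$ is immediately excluded, so I would solve for $p_4$ in terms of $p_1$ and substitute into $p_1\,\widetilde F_{1b}(0,\mathbf{0},P) = \widetilde F_{1r}(0,\mathbf{0},P)$. Rewriting $p_1/n_b - 1/n_r$ via $1/n_j = 1 - 1/\Delta_j$ and clearing the $(1-p_1)$ in the denominator of $p_4$, the equation simplifies to the quadratic
\[
p_1^2 - \beta\, p_1 + 1 = 0, \qquad \beta := 2 - \frac{(\Delta_r - \Delta_b)^2}{k_0\,\Delta_r\Delta_b}.
\]

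The conclusion then follows from the discriminant. Since $\beta \leq 2$ automatically, $\beta^2 - 4 = (\beta-2)(\beta+2) \geq 0$ if and only if $\beta \leq -2$, which is exactly $k_0 \leq (\Delta_r-\Delta_b)^2/(4\Delta_r\Delta_b)$. Equality yields the double root $p_1 = -1$ with $|p_1|=1$; strict inequality yields two distinct real roots whose product is $1$ (so they are mutually reciprocal) and whose sum $\beta$ is strictly less than $-2$ (so both are negative), giving exactly one root with $0<|p_1|<1$ and another with $|p_1'|>1$, as claimed. The main obstacle is the organizational one of evaluating and simplifying the elaborate tilde expressions in \eqref{eq:derivativeformula} at the base point without sign errors; the simultaneous vanishing of the two dot products in $\widetilde{\mathcal A}_r(\mathbf{0},P)$ is the fortunate identity that makes the whole system collapse to a single quadratic.
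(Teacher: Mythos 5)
Your proposal is correct in substance and follows the same overall strategy as the paper: read off $p_2=0$, $p_3=\rho_0$, $p_5=0$ from the trivial equations, evaluate the tilde quantities at the base point (the two vanishing inner products in $\widetilde{\mathcal A}_r(\mathbf 0,P)$ do indeed make everything collapse as you say), and reduce the remaining two equations to a single quadratic whose discriminant is governed by $k_0$. The one genuine difference is the choice of elimination variable: the paper eliminates $p_1$ between the $h_1$- and $h_4$-equations and obtains the quadratic \eqref{eq:quadratic} in $p_4/\rho_0$, then computes $p_1,p_1'$ from its roots and estimates their absolute values at the end; you eliminate $p_4$ and obtain a quadratic directly in $p_1$. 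Your quadratic $p_1^2-\beta p_1+1=0$ with $\beta=2-(\Delta_r-\Delta_b)^2/(k_0\Delta_r\Delta_b)$ is consistent with the paper's explicit roots \eqref{eq:P} and \eqref{eq:P'} (their product is $1$ and their sum is $\beta$), and it makes the second half of the theorem transparent: the roots are reciprocal and both negative, so exactly one of them lies in $(0,1)$ in absolute value. That is a modest gain in clarity over the paper's computation.

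Two points need repair. First, the linear relation you extract from $p_4=h_4(\mathcal P)$ has the wrong sign: \eqref{eq:p4} gives $p_4(1-p_1)=\rho_0\left[p_1(\Delta_b-1)-(\Delta_r-1)\right]$, not $\rho_0\left[(\Delta_r-1)-p_1(\Delta_b-1)\right]$. Since your final quadratic is nevertheless the correct one, this looks like a transcription slip, but as written the substitution step would not produce the stated equation, so it must be fixed. Second, and more substantively, your sufficiency argument works with the cleared equation $p_1\,\widetilde F_{1b}(0,\mathbf 0,P)=\widetilde F_{1r}(0,\mathbf 0,P)$; to conclude that a root of the quadratic actually yields a solution of $P=H(0;\mathbf 0,\mathbf 0;P,P)$ one must also check that $\widetilde F_{1b}(0,\mathbf 0,P)=\rho_0+\frac{d_0}{\Delta_b}\left(\frac{\Delta_b}{n_b}+\frac{p_4}{\rho_0}\right)\neq 0$, since otherwise $h_1$ is undefined at $\mathcal P$ and the cleared equation could hold vacuously with both sides zero. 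The paper proves this nonvanishing by contradiction (if both denominators vanished one would deduce $p_4=\rho_0$ and then $\rho_0<0$); your proposal should include this step.
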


\begin{proof}
Recall that $H=H\(t;\zeta^0,\zeta^1;\xi^0,\xi^1\)$.
Suppose $P=(p_1,\cdots,p_5)$ solves 
$P=H\left(0;{\bf 0,0}; P,P\right)$. Then 
from the definitions of $h_2$ in \eqref{eq:formula for h2}, of $h_3$ in \eqref{eq:formula for h3}, and of $h_5$ in
\eqref{eq:h5formula} we get
\begin{equation}
p_2=h_2\left(0;{\bf 0,0}; P,P\right)=0, \quad p_3=h_3\left(0;{\bf 0,0;} P,P\right)=\rho_0,\label{eq:p3}\quad
p_5=h_5\left(0;{\bf 0,0;} P,P\right)=0.
\end{equation}
Then, from \eqref{eq:Formulas 1} and \eqref{eq:derivativeformula} 
{\small
\begin{equation}\label{eq:values at 0}
\begin{cases}
&\mathcal A_r\left({\bf 0}\right)=-\dfrac{n_r}{\Delta_r\,\rho_0},\qquad 
\mathcal A_b\left({\bf 0}\right)=-\dfrac{n_b}{\Delta_b\,\rho_0}\\
&\mu_r\left(0,{\bf 0}\right)=\mu_b\left(0,{\bf 0}\right)=0\\
&\tau_r\left(0,{\bf 0}\right)=\tau_b\left(0,{\bf 0}\right)=1\\
&D_r\left(0,{\bf 0}\right)=D_b\left(0,{\bf 0}\right)=d_0\\
&F_{1r}\left(0,{\bf 0}\right)=F_{1b}\left(0,{\bf 0}\right)=0,\qquad 
F_{2r}\left(0,{\bf 0}\right)=F_{2b}\left(0,{\bf 0}\right)=\rho_0+d_0\\
&\Lambda_{2,r}\left(0,{\bf 0}\right)=\dfrac{1}{\Delta_r},\qquad
\Lambda_{2,b}\left(0,{\bf 0}\right)=\dfrac{1}{\Delta_b}\\
&\widetilde{\mathcal A}_r\left({\bf 0}, P\right)=\widetilde{\mathcal A}_b\left({\bf 0}, P\right)=0\\
&\widetilde{\mu}_r\left(0,{\bf 0}, P\right)=\dfrac{1}{\Delta_r}\left(\dfrac{\Delta_r}{n_r}+\dfrac{p_4}{\rho_0}\right),\qquad
\widetilde{\mu}_b\left(0,{\bf 0}, P\right)=\dfrac{1}{\Delta_b}\left(\dfrac{\Delta_b}{n_b}+\dfrac{p_4}{\rho_0}\right)\\
&\widetilde {\tau}_r\left(0,{\bf 0}, P\right)=\widetilde {\tau}_{n_r}\left(0,{\bf 0}, P\right)=0\\
&\widetilde {D}_r\left(0,{\bf 0}, P\right)=\widetilde {D}_b\left(0,{\bf 0}, P\right)=0\\
&\widetilde {F}_{1r}\left(0,{\bf 0}, P\right)=\rho_0+\dfrac{d_0}{\Delta_r}\left(\dfrac{\Delta_r}{n_r}+\dfrac{p_4}{\rho_0}\right),\qquad
\widetilde {F}_{1b}\left(0,{\bf 0}, P\right)=\rho_0+\dfrac{d_0}{\Delta_b}\left(\dfrac{\Delta_b}{n_b}+\dfrac{p_4}{\rho_0}\right)\\
&\widetilde {F}_{2r}\left(0,{\bf 0}, P\right)=\widetilde {F}_{2b}\left(0,{\bf 0}, P\right)=0\\
&\widetilde {\Lambda}_{2,r}\left(0,{\bf 0}, P\right)=\widetilde {\Lambda}_{2,b}\left(0,{\bf 0}, P\right)=0.
\end{cases}
\end{equation}
}
Hence by the definition of $h_1$ in \eqref{eq:formula for h1}, 
and since $h_1\left(0;{\bf 0,0}; P,P\right)$ is well defined, we get that $\widetilde{F}_{1b}\left(0,{\bf 0}, P\right)\neq 0$. That is, from \eqref{eq:values at 0}, $\rho_0+\dfrac{d_0}{\Delta_b}\left(\dfrac{\Delta_b}{n_b}+\dfrac{p_4}{\rho_0}\right)\neq 0$ and we have
\begin{equation}\label{eq:p1}
p_1=h_1\left(0;{\bf 0,0}; P,P\right)
=\dfrac{\widetilde {F}_{1r}\left(0,{\bf 0}, P\right)}{\widetilde{F}_{1b}\left(0,{\bf 0}, P\right)}
=\dfrac{\rho_0+\dfrac{d_0}{\Delta_r}\left(\dfrac{\Delta_r}{n_r}+\dfrac{p_4}{\rho_0}\right)}{\rho_0+\dfrac{d_0}{\Delta_b}\left(\dfrac{\Delta_b}{n_b}+\dfrac{p_4}{\rho_0}\right)}.
\end{equation}
From the definition of $h_4$ in \eqref{eq:formula for h4}
\begin{align}\label{eq:p4}
p_4=h_4\left(0;{\bf 0,0}; P,P\right)&=\dfrac{\dfrac{p_1}{\Delta_b\Delta_r}\(\dfrac{\Delta_b}{n_b}+\dfrac{p_4}{\rho_0}\)-\dfrac{1}{n_r\Delta_b}}{\dfrac{1}{\Delta_r\Delta_b\rho_0}}=\left[p_1\left(\dfrac{\Delta_b}{n_b}+\dfrac{p_4}{\rho_0}\right)-\dfrac{\Delta_r}{n_r}\right]\rho_0.
\end{align}
To solve \eqref{eq:p4} in $p_1$,
first notice that $\dfrac{\Delta_b}{n_b}+\dfrac{p_4}{\rho_0}\neq 0$. 
Otherwise, from \eqref{eq:p4} we would obtain $\dfrac{p_4}{\rho_0}=-\dfrac{\Delta_r}{n_r}$, and so $\dfrac{\Delta_r}{n_r}=\dfrac{\Delta_b}{n_b}$, hence $n_r=n_b$, a contradiction since $n_b>n_r$.
Then \eqref{eq:p4} yields
\begin{equation}\label{eq:second form p1}
p_1=\dfrac{\dfrac{\Delta_r}{n_r}+\dfrac{p_4}{\rho_0}}{\dfrac{\Delta_b}{n_b}+\dfrac{p_4}{\rho_0}}.
\end{equation}
Hence from  \eqref{eq:p1} and \eqref{eq:second form p1} we get that
$$
\dfrac{\dfrac{\Delta_r}{n_r}+\dfrac{p_4}{\rho_0}}{\dfrac{\Delta_b}{n_b}+\dfrac{p_4}{\rho_0}}=\dfrac{\rho_0+\dfrac{d_0}{\Delta_r}\left(\dfrac{\Delta_r}{n_r}+\dfrac{p_4}{\rho_0}\right)}{\rho_0+\dfrac{d_0}{\Delta_b}\left(\dfrac{\Delta_b}{n_b}+\dfrac{p_4}{\rho_0}\right)}
$$
Then 
\begin{equation}\label{eq:Trick 0}
\left(\dfrac{\Delta_r}{n_r}+\dfrac{p_4}{\rho_0}\right)\left(\rho_0+\dfrac{d_0}{\Delta_b}\left(\dfrac{\Delta_b}{n_b}+\dfrac{p_4}{\rho_0}\right)\right)=\left(\dfrac{\Delta_b}{n_b}+\dfrac{p_4}{\rho_0}\right)\left(\rho_0+\dfrac{d_0}{\Delta_r}\left(\dfrac{\Delta_r}{n_r}+\dfrac{p_4}{\rho_0}\right)\right).
\end{equation}
Simplifying,
$$
\rho_0\left(\dfrac{\Delta_r}{n_r}-\dfrac{\Delta_b}{n_b}\right)=\left(\dfrac{\Delta_r}{n_r}+\dfrac{p_4}{\rho_0}\right)\left(\dfrac{\Delta_b}{n_b}+\dfrac{p_4}{\rho_0}\right)\left(\dfrac{1}{\Delta_r}-\dfrac{1}{\Delta_b}\right)d_0.
$$
Notice that 
\begin{equation}\label{eq:trick}
\dfrac{\Delta_r}{n_r}-\dfrac{\Delta_b}{n_b}=\dfrac{1}{n_r-1}-\dfrac{1}{n_b-1}=-1+\Delta_r+1-\Delta_b=\Delta_r-\Delta_b.
\end{equation}
Then dividing by $\Delta_r-\Delta_b$\footnote{$\Delta_r-\Delta_b=\dfrac{n_b-n_r}{(n_r-1)(n_b-1)}> 0$}, and using the notation $k_0=\dfrac{\rho_0}{d_0}$ yields
\begin{equation}\label{eq:Useful}
k_0\Delta_r\Delta_b+\left(\dfrac{\Delta_b}{n_b}+\dfrac{p_4}{\rho_0}\right)\left(\dfrac{\Delta_r}{n_r}+\dfrac{p_4}{\rho_0}\right)=0.
\end{equation}
Expanding we obtain that $p_4/\rho_0$ satisfies the following quadratic equation:
\begin{equation}\label{eq:quadratic}
\(\dfrac{p_4}{\rho_0}\)^2+\(\dfrac{\Delta_r}{n_r}+\dfrac{\Delta_b}{n_b}\)\dfrac{p_4}{\rho_0}+k_0\Delta_r\Delta_b+\dfrac{\Delta_r\Delta_b}{n_rn_b}=0.
\end{equation}


The discriminant of \eqref{eq:quadratic} is 
\begin{equation}
\delta=\(\dfrac{\Delta_r}{n_r}+\dfrac{\Delta_b}{n_b}\)^2-4\dfrac{\Delta_r\Delta_b}{n_rn_b}-4k_0\Delta_r\Delta_b=\(\dfrac{\Delta_r}{n_r}-\dfrac{\Delta_b}{n_b}\)^2-4k_0\Delta_r\Delta_b,
\end{equation}
then \eqref{eq:trick} yields
\begin{equation}\label{eq:formula for delta}
\delta=(\Delta_r-\Delta_b)^2-4k_0\Delta_r\Delta_b.
\end{equation}
Hence \eqref{eq:quadratic} has a real solutions if and only if  $k_0\leq \dfrac{\(\Delta_r-\Delta_b\)^2}{4\Delta_r\Delta_b}$.
Therefore we have proved the necessity part in Theorem \ref{thm:Algebraic system}, and if $P$ solves the algebraic system, then 
\begin{align*}
p_4&=\dfrac{-\left(\dfrac{\Delta_r}{n_r}+\dfrac{\Delta_b}{n_b}\right)-
\sqrt{\delta}}{2}\rho_0,\qquad 
p_4'=\dfrac{-\left(\dfrac{\Delta_r}{n_r}+\dfrac{\Delta_b}{n_b}\right)
+\sqrt{\delta}}{2}\rho_0
\end{align*}
and by \eqref{eq:second form p1} and \eqref{eq:trick}, the corresponding $p_1$ and $p_1'$ are
\begin{align*}
p_1&=\dfrac{\Delta_r-\Delta_b- \sqrt{\delta}}{\Delta_b-\Delta_r-\sqrt{\delta}},\qquad 
p_1'=\dfrac{\Delta_r-\Delta_b+ \sqrt{\delta}}{\Delta_b-\Delta_r+\sqrt{\delta}}.
\end{align*}
Therefore if $P=H(0;{\bf 0,0};P,P)$, then the solutions are
\begin{align}
P&=\(\dfrac{\Delta_r-\Delta_b- \sqrt{\delta}}{\Delta_b-\Delta_r-\sqrt{\delta}},0,\rho_0,\dfrac{-\left(\dfrac{\Delta_r}{n_r}+\dfrac{\Delta_b}{n_b}\right)-\sqrt{\delta}}{2}\rho_0,0\right)\label{eq:P}\\
P'&= \(\dfrac{\Delta_r-\Delta_b+ \sqrt{\delta}}{\Delta_b-\Delta_r+\sqrt{\delta}},0,\rho_0,\dfrac{-\left(\dfrac{\Delta_r}{n_r}+\dfrac{\Delta_b}{n_b}\right)+\sqrt{\delta}}{2}\rho_0,0\right)\label{eq:P'}
\end{align}
with $\delta$ given in \eqref{eq:formula for delta}. 

Let us now prove that if $k_0\leq \dfrac{\(\Delta_r-\Delta_b\)^2}{4\Delta_r\Delta_b}$, then the system 
$P=H(0;{\bf 0,0};P,P)$ is solvable.
In fact, from the assumption on $k_0$ there is $p_4$ solving \eqref{eq:quadratic}. We claim that this implies $\rho_0+\dfrac{d_0}{\Delta_b}\left(\dfrac{\Delta_b}{n_b}+\dfrac{p_4}{\rho_0}\right)\neq 0$.
Assume otherwise, since \eqref{eq:quadratic} is equivalent to \eqref{eq:Trick 0} and $\rho_0>0$ then $p_4$ solves the system
\begin{align*}
\rho_0+\dfrac{d_0}{\Delta_b}\left(\dfrac{\Delta_b}{n_b}+\dfrac{p_4}{\rho_0}\right)&=0\\
\rho_0+\dfrac{d_0}{\Delta_r}\left(\dfrac{\Delta_r}{n_r}+\dfrac{p_4}{\rho_0}\right)&=0
\end{align*}
Subtracting both identities we get 
$$d_0\left(\dfrac{1}{n_b}-\dfrac{1}{n_r}\right)+\dfrac{p_4 d_0}{\rho_0}\left(\dfrac{1}{\Delta_b}-\dfrac{1}{\Delta_r}\right)=0.$$
Since $\dfrac{1}{\Delta_b}-\dfrac{1}{\Delta_r}=\dfrac{1}{n_r}-\dfrac{1}{n_b}$ and $n_r\neq n_b$, dividing by $d_0\left(\dfrac{1}{n_b}-\dfrac{1}{n_r}\right)$ we get
$1-\dfrac{p_4}{\rho_0}=0,$
and then $p_4=\rho_0$.
Replacing in the first equation of the system yields
$\rho_0=-\dfrac{d_0}{\Delta_b}\left(\dfrac{\Delta_b}{n_b}+1\right)$,
and since $\rho_0,d_0>0$, we get a contradiction.
Now let $p_1$ be the corresponding value to this $p_4$ given by \eqref{eq:p1}, and $p_2,p_3,p_5$ from \eqref{eq:p3}; and $P$ be the point with these coordinates. 
Hence by the formula for $\widetilde F_{1b}(0,{\bf 0},P)$ in \eqref{eq:values at 0}, we get that 
$\widetilde F_{1b}(0,{\bf 0},P)\neq 0$, and therefore $h_1\left(0;{\bf 0,0}; P,P\right)$ is well defined, and $P$ solves the algebraic system. Then the possible values of $P$ solving the algebraic system are $P$ and $P'$ given by \eqref{eq:P} and \eqref{eq:P'}.




We now prove the last part of the theorem. 
If $k_0=\dfrac{\(\Delta_r-\Delta_b\)^2}{4\Delta_r\Delta_b}$ then $\delta=0$ and $P=P'=\left(-1,0,\rho_0,\dfrac{-\left(\dfrac{\Delta_r}{n_r}+\dfrac{\Delta_b}{n_b}\right)}{2}\rho_0,0\right)$.
If $k_0<\dfrac{\(\Delta_r-\Delta_b\)^2}{4\Delta_r\Delta_b}$ then the solutions $P\neq P'$. Moreover, since $\Delta_r>\Delta_b$, and from \eqref{eq:formula for delta} $0<\sqrt{\delta}<\Delta_r-\Delta_b$, Therefore
\begin{align*}
0<\left|p_1\right|&=\dfrac{\left |\Delta_r-\Delta_b- \sqrt{\delta}\right|}{\left|\Delta_b-\Delta_r-\sqrt{\delta}\right|}= \dfrac{\Delta_r-\Delta_b- \sqrt{\delta}}{\Delta_r-\Delta_b+\sqrt{\delta}}<1\\
\left |p_1'\right|&=\dfrac{\left|\Delta_r-\Delta_b+ \sqrt{\delta}\right|}{\left|\Delta_b-\Delta_r+\sqrt{\delta}\right|}=\dfrac{\Delta_r-\Delta_b+ \sqrt{\delta}}{\Delta_r-\Delta_b-\sqrt{\delta}}>1.
\end{align*}
\end{proof}

The following corollary gives a necessary condition on $k_0$ for the existence of solutions to Problem $B$.
\begin{corollary}\label{cor: Sufficient Condition}
If $k_0>\dfrac{(\Delta_r-\Delta_b)^2}{4\Delta_r\Delta_b}$ then Problem $B$ has no local solutions.
\end{corollary}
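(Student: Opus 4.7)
The plan is to argue by contradiction, reducing the existence of a local solution to Problem B to the solvability of the algebraic system analyzed in Theorem \ref{thm:Algebraic system}.

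Suppose Problem B admits a local solution on some interval around $0$, with initial data $\rho(0)=\rho_0$ and $d_r(0)=d_b(0)=d_0$ (these equalities being forced by Lemma \ref{lem:initial condition point source}). First I would invoke Theorem \ref{thm:Problem B implies system}, which produces a $C^1$ function $Z=(z_1,\ldots,z_5)$ on a neighborhood of $0$ satisfying $Z(0)={\bf 0}$ and the functional differential system
\[
Z'(t)=H\bigl(t;Z(t),Z(z_1(t));Z'(t),Z'(z_1(t))\bigr),
\]
where $H$ is the explicit smooth map built from the geometric data and depending on $\rho_0$ and $d_0$.

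The key step is then simply to evaluate this identity at $t=0$. Since $Z(0)={\bf 0}$, in particular $z_1(0)=0$, so $Z(z_1(0))=Z(0)={\bf 0}$ and $Z'(z_1(0))=Z'(0)$. Setting $P:=Z'(0)$, the functional system collapses at $t=0$ to the algebraic relation
\[
P=H\bigl(0;{\bf 0},{\bf 0};P,P\bigr).
\]
Thus the existence of a local solution to Problem B forces the solvability of the algebraic system \eqref{eq:system P} for the given values $\rho_0, d_0$, i.e., for the ratio $k_0=\rho_0/d_0$.

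Finally I would apply Theorem \ref{thm:Algebraic system}, which characterizes solvability of $P=H(0;{\bf 0},{\bf 0};P,P)$ by the inequality $k_0\le \dfrac{(\Delta_r-\Delta_b)^2}{4\Delta_r\Delta_b}$. The hypothesis $k_0>\dfrac{(\Delta_r-\Delta_b)^2}{4\Delta_r\Delta_b}$ therefore rules out the existence of any such $P$, yielding the desired contradiction. There is no genuine obstacle here beyond checking that the reduction step is clean; the only subtle point is observing that $z_1(0)=0$ follows from $Z(0)={\bf 0}$ (so that the apparently nonlocal arguments $Z(z_1(t))$, $Z'(z_1(t))$ collapse to $Z(0)$ and $Z'(0)$ at $t=0$), after which Theorem \ref{thm:Algebraic system} closes the argument.
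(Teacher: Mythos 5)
Your argument is correct and is essentially identical to the paper's own proof: both invoke Theorem \ref{thm:Problem B implies system} to produce $Z$ solving \eqref{eq:Optic System}, evaluate at $t=0$ (using $Z(0)={\bf 0}$ so the nonlocal terms collapse) to conclude $Z'(0)$ solves the algebraic system \eqref{eq:system P}, and then apply Theorem \ref{thm:Algebraic system} to derive the contradiction with $k_0>\dfrac{(\Delta_r-\Delta_b)^2}{4\Delta_r\Delta_b}$. No gaps.
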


\begin{proof}
If Problem B has a solution then by Theorem \ref{thm:Problem B implies system}, the vector $Z(t)=(\varphi(t),v_1(t)+\rho_0,v_2(t),v_1'(t),v_2'(t)-\rho_0)$ solves the functional system \eqref{eq:Optic System} for $t$ in a neighborhood of $0$. 
Plugging $t=0$ in \eqref{eq:Optic System} yields
$$\(0;{\bf 0, 0};Z'(0),Z'(0)\)=H(0;{\bf 0,0};Z'(0),Z'(0)).$$ Hence $Z'(0)$ is a solution to the algebraic system $P=H(0;{\bf 0,0};P,P)$, and from Theorem \ref{thm:Algebraic system} we have $k_0\leq \dfrac{(\Delta_r-\Delta_b)^2}{4\Delta_r\Delta_b}$. 
\end{proof}

\begin{corollary}\label{cor:consequence on rho''}
If $k_0\leq \dfrac{(\Delta_r-\Delta_b)^2}{4\Delta_r\Delta_b}$ and a solution $\rho$ and $\varphi$ to Problem B exists, then 
\begin{equation}\label{eq:necessary condition for existence Problem B}
\dfrac{\rho''(0)}{\rho_0}\in(\Delta_b,\Delta_r)
\end{equation}
In fact, 
\begin{equation}\label{eq:formula for rho''}
\dfrac{\rho''(0)}{\rho_0}=\dfrac{2+\left(\dfrac{\Delta_r}{n_r}+\dfrac{\Delta_b}{n_b}\right)\pm\sqrt{\delta}}{2},
\end{equation}
with $\delta$ given in \eqref{eq:formula for delta}.
\end{corollary}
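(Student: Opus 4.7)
The plan is to evaluate the functional system obtained in Theorem \ref{thm:Problem B implies system} at $t=0$, relate the resulting algebraic identity to $\rho''(0)$, and then read off the bounds from the explicit formulas already computed in Theorem \ref{thm:Algebraic system}.

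First I would use Theorem \ref{thm:Problem B implies system}: since $\rho$ and $\varphi$ solve Problem B, the vector $Z(t)=(\varphi(t), v_1(t)+\rho_0, v_2(t), v_1'(t), v_2'(t)-\rho_0)$ with $v_1(t)=-\rho(t)\cos t$, $v_2(t)=\rho(t)\sin t$ satisfies $Z(0)=\mathbf{0}$ and the functional equation $Z'(t)=H(t;Z(t),Z(z_1(t));Z'(t),Z'(z_1(t)))$. By Lemma \ref{lem:initial condition point source}, $\varphi(0)=0=z_1(0)$, so evaluating at $t=0$ yields $Z'(0)=H(0;\mathbf{0},\mathbf{0};Z'(0),Z'(0))$. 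Thus $P:=Z'(0)$ is a solution of the algebraic system studied in Theorem \ref{thm:Algebraic system}, whose solutions are listed in \eqref{eq:P}--\eqref{eq:P'}.

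Next, I would compute the fourth coordinate $p_4$ of $Z'(0)$ directly from the definitions. Using $\rho'(0)=0$ from Lemma \ref{lem:initial condition point source}, a short computation gives $p_4=v_1''(0)=\rho_0-\rho''(0)$. Matching this with the two values of $p_4$ from \eqref{eq:P}, \eqref{eq:P'} yields
\[
\frac{\rho''(0)}{\rho_0} \;=\; 1-\frac{p_4}{\rho_0} \;=\; \frac{2+\left(\dfrac{\Delta_r}{n_r}+\dfrac{\Delta_b}{n_b}\right)\pm\sqrt{\delta}}{2},
\]
which is precisely \eqref{eq:formula for rho''}. Using the identity $\frac{\Delta_r}{n_r}=\Delta_r-1$ (and the same for $b$), already exploited in \eqref{eq:trick}, the numerator simplifies to $\Delta_r+\Delta_b\pm\sqrt{\delta}$, so
\[
\frac{\rho''(0)}{\rho_0}=\frac{\Delta_r+\Delta_b\pm\sqrt{\delta}}{2}.
\]

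Finally, to obtain the inclusion in \eqref{eq:necessary condition for existence Problem B}, I would use that $k_0=\rho_0/d_0>0$, so from \eqref{eq:formula for delta} we get the strict bound $0\leq\sqrt{\delta}<\Delta_r-\Delta_b$ (recall $\Delta_r>\Delta_b$ since the map $n\mapsto n/(n-1)$ is decreasing and $n_b>n_r$). Consequently
\[
\Delta_b=\frac{\Delta_r+\Delta_b-(\Delta_r-\Delta_b)}{2}<\frac{\Delta_r+\Delta_b\pm\sqrt{\delta}}{2}<\frac{\Delta_r+\Delta_b+(\Delta_r-\Delta_b)}{2}=\Delta_r,
\]
which is the claim. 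There is no substantive obstacle here: the corollary is essentially a change of variables from $p_4$ to $\rho''(0)$ applied to Theorem \ref{thm:Algebraic system}. The only points that require care are (i) using $\rho'(0)=0$ when expanding $v_1''(0)$, and (ii) recognizing the algebraic simplification $2+\tfrac{\Delta_r}{n_r}+\tfrac{\Delta_b}{n_b}=\Delta_r+\Delta_b$ so that the bounds match cleanly with $\Delta_r-\Delta_b$ from the discriminant.
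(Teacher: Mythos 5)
Your proposal is correct and follows essentially the same route as the paper: evaluate the functional system at $t=0$ to see that $Z'(0)$ solves the algebraic system of Theorem \ref{thm:Algebraic system}, use $z_4'(0)=v_1''(0)=\rho_0-\rho''(0)$ to translate $p_4$ into $\rho''(0)/\rho_0$, and read off \eqref{eq:formula for rho''} from \eqref{eq:P}--\eqref{eq:P'}. The only (immaterial) difference is that the paper obtains the inclusion $(\Delta_b,\Delta_r)$ directly from the sign of the product in \eqref{eq:Useful}, whereas you derive it from the explicit formula via the bound $0\leq\sqrt{\delta}<\Delta_r-\Delta_b$; both arguments are equivalent.
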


\begin{proof}
If $\rho$ and $\varphi$ solve Problem B, then from the proof of Corollary \ref{cor: Sufficient Condition}, $Z'(0)$ solves the algebraic system, with
$Z(t)=(\varphi(t),v_1(t)+\rho_0,v_2(t),v_1'(t),v_2'(t)-\rho_0)$. Using the proof of Theorem \ref{thm:Algebraic system}, it follows that $z_4'(0)$ satisfies \eqref{eq:Useful} then
$$\(\dfrac{\Delta_r}{n_r}+\dfrac{z_4'(0)}{\rho_0}\)\(\dfrac{\Delta_b}{n_b}+\dfrac{z_4'(0)}{\rho_0}\)=-k_0\Delta_r\Delta_b< 0,$$
and therefore $\dfrac{z_4'(0)}{\rho_0}\in \left(-\dfrac{\Delta_r}{n_r},-\dfrac{\Delta_b}{n_b}\right)$.

On the other hand, $z_4(t)=v_1'(t)=\rho(t)\sin t-\rho'(t)\cos t$, obtaining
$z_4'(0)=\rho_0-\rho''(0)$, so
\begin{equation}\label{eq:rho''andz4'}
\dfrac{\rho''(0)}{\rho_0}=1-\dfrac{z_4'(0)}{\rho_0}.
\end{equation}
We conclude that
$$\dfrac{\rho''(0)}{\rho_0}\in\left(1+\dfrac{\Delta_b}{n_b},1+\dfrac{\Delta_r}{n_r}\right)=(\Delta_b,\Delta_r).$$

Finally, from \eqref{eq:P}, \eqref{eq:P'}, and \eqref{eq:rho''andz4'}, we obtain \eqref{eq:formula for rho''}. 
\end{proof}

\begin{remark}\label{rmk:three colors}\rm 
\,The analogue of Problem B for three or more colors has no solution. In fact, if rays, superposition of three colors, are emitted from $O$ and $n_r<n_j<n_b$ are the refractive indices inside the lens for each color, then \eqref{eq:necessary condition for existence Problem B} must be satisfied for the pairs $n_r,n_j$ and $n_j$, $n_b$. Hence $$\dfrac{\rho''(0)}{\rho(0)}\in\left(\Delta_j,\Delta_r\right)\cap\left(\Delta_b,\Delta_j\right),$$
which is impossible since the last intersection is empty.
\end{remark}

\subsection{Existence of local solutions to \eqref{eq:Optic System}}\label{sec:existence of local solution}
In order to prove existence of solutions to the system \eqref{eq:Optic System}, we will apply Theorem \ref{thm: Existence}. From Theorem \ref{thm:Algebraic system}, we must assume that $k_0\leq\dfrac{(\Delta_r-\Delta_b)^2}{4\Delta_r\Delta_b}$. In this case, the algebraic system $P=H(0;{\bf 0,0};P,P)$ has a solution given by \eqref{eq:P} with $|p_1|\leq 1$, and therefore \eqref{eq:First Component} holds. Let $\mathcal P=(0;{\bf 0,0};P,P)$.
To show that $H$ satisfies all the hypotheses of Theorem \ref{thm: Existence}, it remains to show there is a norm in $\R^5$ so that $H$ satisfies conditions (i)-(iv) with respect to this norm in a neighborhood $N_\varepsilon(\mathcal P)$.
Our result is as follows.

\begin{theorem}\label{thm:Existence last}
There exists a positive constant $C(r,b)<\dfrac{(\Delta_r-\Delta_b)^2}{4\Delta_r\Delta_b}$, depending only on $n_r$ and $n_b$, such that for $0<k_0<C(r,b)$, the system \eqref{eq:Optic System} has a unique local solution $Z(t)=(z_1(t),\cdots,z_5(t))$ with $Z'(0)=P$, and $|z_1(t)|\leq |t|$, with $P$ given in \eqref{eq:P}. 
Hence from Theorem \ref{thm:Converse} and for those values of $k_0$, there exist unique $\rho$ and $\varphi$ solving Problem B.
\end{theorem}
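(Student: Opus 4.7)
The approach is to apply the existence theorem for functional differential equations (Theorem \ref{thm: Existence}) to the map $H$ and vector $P$ from \eqref{eq:P} produced in Theorems \ref{thm:Problem B implies system} and \ref{thm:Algebraic system}, and then recover the optical data via Theorem \ref{thm:Converse}. Set $\mathcal P=(0;\mathbf 0,\mathbf 0;P,P)$. Since $0<k_0<(\Delta_r-\Delta_b)^2/(4\Delta_r\Delta_b)$, Theorem \ref{thm:Algebraic system} delivers $P$ with $0<|p_1|<1$, which guarantees both the assumption $|p_1|\le 1$ of Theorem \ref{thm: Existence} and the stronger assumption \eqref{eq:additional} of Theorem \ref{thm:Converse}. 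Smoothness of $H$ near $\mathcal P$ (Theorem \ref{thm:Problem B implies system}) yields hypotheses (i) and (ii) of Theorem \ref{thm: Existence} on any chosen neighborhood $N_\varepsilon(\mathcal P)$, and hypothesis (iv) holds on a possibly smaller $N_\varepsilon(\mathcal P)$ since $h_1(\mathcal P)=p_1$ with $|p_1|<1$.

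The crux is hypothesis (iii): producing a norm on $\R^5$ and constants $C_0,C_1$ with $C_0+C_1<1$. By Proposition \ref{eq:prop Bound on C0+C1}, once a norm is fixed one may take $C_i=\max_{N_\varepsilon(\mathcal P)}\||\nabla_{\xi^i}H|\|$, so by continuity in $\varepsilon$ it suffices to arrange $\||A_0|\|+\||A_1|\|<1$ where $A_i:=\nabla_{\xi^i}H(\mathcal P)$. I would compute $A_0,A_1$ explicitly from \eqref{eq:formula for h1}--\eqref{eq:h5formula} using the base-point values \eqref{eq:values at 0}. Two structural simplifications are immediate: rows $2$ and $3$ of both $A_0,A_1$ vanish because $h_2,h_3$ carry no $\xi$-dependence; and row $5$ vanishes at $t=0$ because the only $\xi$-dependence of $h_5$ enters through $(\tan t)\xi^0_4$, whose derivative is zero at $t=0$. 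Hence each $A_i$ has rank at most $2$, and by an elementary argument its nonzero eigenvalues coincide with those of the $2\times 2$ principal submatrix indexed by $\{1,4\}$.

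The limiting analysis as $k_0\to 0^+$ closes the argument. From \eqref{eq:P} one has $p_1\to 0$ and $p_4/\rho_0\to -\Delta_r/n_r$, so by \eqref{eq:values at 0} the denominator $\widetilde F_{1b}(0,\mathbf 0,P)$ in the formula \eqref{eq:formula for h1} for $h_1$ tends to infinity in modulus. Since numerator derivatives remain bounded, all entries $\partial h_1/\partial\xi^i_j|_{\mathcal P}$ tend to $0$. A direct computation of $\partial h_4/\partial\xi^i_j|_{\mathcal P}$ from \eqref{eq:formula for h4} and \eqref{eq:derivativeformula} shows these stay bounded with explicit limits (their denominator $\tfrac{1}{n_r}\mathcal A_r(\mathbf 0)\Lambda_{2,b}(0,\mathbf 0)=-1/(\Delta_r\Delta_b\rho_0)$ is bounded away from $0$). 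Consequently the $2\times 2$ blocks converge to explicit limit matrices, and $R_{A_0}(k_0)+R_{A_1}(k_0)$ is continuous in $k_0$ with $\lim_{k_0\to 0^+}[R_{A_0}+R_{A_1}]<1$. Choose $C(r,b)>0$ so that this sum stays below some $\sigma<1$ for $k_0\in(0,C(r,b))$. For such $k_0$, since the nontrivial action of $A_0,A_1$ is confined to a common $2$-dimensional invariant subspace, a single weighted norm on $\R^5$ (obtained, e.g., by a simultaneous coordinate change bringing both blocks close to their Jordan forms) realizes $\||A_i|\|$ within any prescribed $\eta$ of $R_{A_i}$, yielding (iii). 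Theorem \ref{thm: Existence} then produces the desired local $Z$, Theorem \ref{thm:Uniqueness} gives its uniqueness, and Theorem \ref{thm:Converse} delivers $\rho,\varphi$ solving Problem B on $[-\delta,\delta]$.

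The main obstacle is the combined Jacobian computation and norm construction. The decay of the first-row entries as $k_0\to 0^+$ is conceptually transparent from the blow-up of $\widetilde F_{1b}$, but the fourth-row entries require a somewhat long calculation from \eqref{eq:formula for h4}; and the simultaneous norm-construction step must handle the possibility that the limiting $2\times 2$ blocks are not diagonalizable, for which an $\varepsilon$-Jordan form argument is needed.
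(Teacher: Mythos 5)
Your proposal follows the same architecture as the paper: compute $\nabla_{\xi^0}H(\mathcal P)$ and $\nabla_{\xi^1}H(\mathcal P)$, observe that only rows $1$ and $4$ survive so that everything reduces to the $2\times 2$ blocks in the coordinates $\{1,4\}$, produce a norm with $C_0+C_1<1$ for $k_0$ small, and then invoke Theorems \ref{thm: Existence}, \ref{thm:Uniqueness} and \ref{thm:Converse}. However, two steps do not hold as written. The assertion that all entries $\partial h_1/\partial\xi^i_j(\mathcal P)$ tend to $0$ because the ``numerator derivatives remain bounded'' is false: by \eqref{eq:grad tilde F1r} the relevant numerator derivative is $\partial_{\xi^0_4}\widetilde F_{1r}(0,{\bf 0},P)=1/(k_0\Delta_r)$, which blows up at exactly the rate at which the denominator $\widetilde F_{1b}(0,{\bf 0},P)=\rho_0+\dfrac{d_0}{\Delta_b}\bigl(\tfrac{\Delta_b}{n_b}+\tfrac{p_4}{\rho_0}\bigr)$ blows up, so the $(1,4)$ entry of $\nabla_{\xi^0}H(\mathcal P)$ in \eqref{eq:grad h1 xi0} converges to the nonzero limit $\Delta_b/\bigl(\rho_0\Delta_r(\Delta_b-\Delta_r)\bigr)$. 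Consequently the limiting $2\times2$ block is not the one your computation would produce, and $R_{\xi^0}$ does not tend to $0$: it equals $\sqrt{2\Delta_b/(\Delta_r+\Delta_b+\sqrt{\delta})}$ and satisfies $R_{\xi^0}>\sqrt{\Delta_b/\Delta_r}$ for \emph{every} admissible $k_0$, see \eqref{eq:Rxi0 bound}. Your target inequality $\lim_{k_0\to0^+}\bigl(R_{\xi^0}+R_{\xi^1}\bigr)<1$ does survive, but only because $R_{\xi^0}\to\sqrt{\Delta_b/\Delta_r}<1$ while $R_{\xi^1}=|p_1|\to0$, not for the reason you give.

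The second, more substantive gap is the norm construction. It is not true in general that two matrices admit a single norm whose induced matrix norm is within $\eta$ of the spectral radius of each: the matrices $\left(\begin{smallmatrix}0&M\\0&0\end{smallmatrix}\right)$ and $\left(\begin{smallmatrix}0&0\\M&0\end{smallmatrix}\right)$ both have spectral radius $0$, yet for any induced norm the product of their norms is at least $M^2$. So ``a simultaneous coordinate change bringing both blocks close to their Jordan forms'' is not a usable general principle and must be replaced by an argument specific to this pair. (Also, the nontrivial action is not confined to $\mathrm{span}(e_1,e_4)$: the third columns of both Jacobians are nonzero and have to be absorbed by the norm.) What makes the step work — and what the paper does — is either the observation that $\nabla_{\xi^1}H(\mathcal P)=p_1\,B$ with $B$ controlled uniformly in $k_0$, so that its induced norm is $O(|p_1|)\to0$ regardless of optimality for $B$; or, explicitly, the weighted max-norm $\|x\|_{k_0}=\max_i\lambda_i|x_i|$ with $\lambda_1=\lambda_2=\lambda_5=1$, $\lambda_4$ proportional to $1/|c|$ and $\lambda_3=N\lambda_4$, for which $\left\|\left|\nabla_{\xi^0}H(\mathcal P)\right|\right\|_{k_0}\le s_0<1$ uniformly in $k_0$ and $\left\|\left|\nabla_{\xi^1}H(\mathcal P)\right|\right\|_{k_0}\le |p_1|\,s_1$ with $s_0,s_1$ depending only on $n_r,n_b$. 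Once these two points are repaired, your argument coincides with the paper's.
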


\begin{proof}
Since by construction $h_i$ are all smooth in a small neighborhood of $\mathcal P$,
%
$H$ is Lipschitz in that neighborhood for any norm. 
To apply Theorem \ref{thm: Existence}, we need to find a norm $\|\cdot \|$ in $\R^5$ and a neighborhood $N_{\varepsilon}(\mathcal P)$ so that $|h_1|\leq 1$, and $H$ satisfies the contraction condition \eqref{eq:Contraction}.
 

To prove the contraction property of $H$, we first calculate the following matrices $\nabla_{\xi^0}H=\left(\dfrac{\partial h_i}{\partial \xi^0_j}\right)_{1\leq i,j\leq 5}$, and  $\nabla_{\xi^1}H=\left(\dfrac{\partial h_i}{\partial \xi^1_j}\right)_{1\leq i,j\leq 5}$ at the point $\mathcal P$.
\\
\textbf{Calculation of $\nabla_{\xi^0}H(\mathcal P)=\left(\dfrac{\partial h_i}{\partial \xi^0_j}(\mathcal P)\right)_{1\leq i,j\leq 5}$.} \\
From \eqref{eq:formula for h2} and \eqref{eq:formula for h3}, $h_2$ and $h_3$ do not depend on $\xi^0$, then
$$\partial_{\xi^0_j}h_2(\mathcal P)=\partial_{\xi^0_j}h_3(\mathcal P)=0,\qquad 1\leq j\leq 5.$$
Also from the definition of $h_5$ in \eqref{eq:h5formula}
$$\partial_{\xi^0_j}h_5(\mathcal P)=-\delta_{j}^5\,(\tan t)|_{\mathcal P}= 0,\qquad 1\leq j\leq 5.$$

We next calculate $\nabla_{\xi^0} h_1(\mathcal P)$. From \eqref{eq:formula for h1},
$$
\nabla_{\xi^0} h_1(\mathcal P)=\dfrac{1}{\widetilde {F}_{1b}\left(0,{\bf 0},P\right)}\nabla_{\xi^0} \widetilde {F}_{1r}(0,{\bf 0},P).
$$
Recall from \eqref{eq:values at 0}
$$\widetilde{ F}_{1b}\left(0,{\bf 0},P\right)=\rho_0+\dfrac{d_0}{\Delta_b}\left(\dfrac{\Delta_b}{n_b}+\dfrac{p_4}{\rho_0}\right),\qquad \mu_r(0,{\bf 0})=0,\qquad D_r(0,{\bf 0})=d_0.$$
Differentiating $\widetilde F_{1r}$ given in \eqref{eq:derivativeformula} with respect to $\xi_j^0$, we then get
$$
\dfrac{\partial\widetilde F_{1r}}{\partial \xi_j^0}(0,{\bf 0},P)=\delta_{j}^3+d_0\dfrac{\partial \widetilde{\mu}_r}{\partial\xi_j^0}(0,{\bf 0},P),\qquad 1\leq j\leq 5.
$$
From \eqref{eq:values at 0}, $\mathcal A_r({\bf 0)}=\dfrac{-n_r}{\Delta_r\rho_0}$, then differentiating $\widetilde \mu_r$ in \eqref{eq:derivativeformula} with respect to $\xi_j^0$ at $(0,{\bf 0},P)$ yields
\begin{equation}\label{eq:derivative of tilde mur}
\dfrac{\partial \widetilde{\mu}_r}{\partial\xi_j^0}(0,{\bf 0},P)=\dfrac{1}{\Delta_r \rho_0}\delta_j^4.
\end{equation}
Therefore
\begin{align}
\nabla_{\xi^0} \widetilde{F}_{1r}(0,{\bf 0}, P)&=\left(0,0,1,\dfrac{1}{k_0\Delta_r},0\right)\label{eq:grad tilde F1r}.
\end{align}
We conclude that
\begin{equation}\label{eq:grad h1 xi0}
\nabla_{\xi^0} h_1(\mathcal P)=\dfrac{1}{\rho_0+\dfrac{d_0}{\Delta_b}\left(\dfrac{\Delta_b}{n_b}+\dfrac{p_4}{\rho_0}\right)}\left(0,0,1,\dfrac{1}{k_0\Delta_r},0\right).
\end{equation}

We next calculate $\nabla_{\xi^0} h_4(\mathcal P)$. Recall from \eqref{eq:values at 0} that
{\small
$$
\mu_b(0,{\bf 0})=\mu_r(0,{\bf 0})=0,\quad  \widetilde\mu_b(0,{\bf 0}, P)=\dfrac{1}{\Delta_b}\left(\dfrac{\Delta_b}{n_b}+\dfrac{p_4}{\rho_0}\right),\quad \Lambda_{2,r}(0,{\bf 0})=\dfrac{1}{\Delta_r}, \quad\Lambda_{2,b}(0,{\bf 0})=\dfrac{1}{\Delta_b},\quad
\mathcal A_r({\bf 0})=-\dfrac{n_r}{\Delta_r\rho_0}.
$$
}
Then from \eqref{eq:formula for h4} it follows that
$$
\dfrac{\partial h_4}{\partial \xi_j^0}=\dfrac{\dfrac{1}{\Delta_b}\left(\dfrac{\Delta_b}{n_b}+\dfrac{p_4}{\rho_0}\right)\dfrac{1}{\Delta_r}}{\dfrac{1}{\Delta_r \rho_0}\dfrac{1}{\Delta_b}}\delta_j^1=\rho_0\left(\dfrac{\Delta_b}{n_b}+\dfrac{p_4}{\rho_0}\right)\delta_j^1,\quad 1\leq j\leq 5.
$$
Hence
\begin{equation}\label{eq:grad h4 xi0}
\nabla_{\xi^0} h_4(\mathcal P)=\rho_0\left( \dfrac{\Delta_b}{n_b}+\dfrac{p_4}{\rho_0},0,0,0,0\right).
\end{equation}
We then conclude that
\begin{equation}\label{eq:Total derivative xi0}
\nabla_{\xi^0} H(\mathcal P)=
\begin{bmatrix}
    0 & 0 & \dfrac{1}{\rho_0+\dfrac{d_0}{\Delta_b}\left(\dfrac{\Delta_b}{n_b}+\dfrac{p_4}{\rho_0}\right)}& \dfrac{\dfrac{1}{k_0\Delta_r}}{\rho_0+\dfrac{d_0}{\Delta_b}\left(\dfrac{\Delta_b}{n_b}+\dfrac{p_4}{\rho_0}\right)}& 0  \\
    0 & 0 & 0 & 0 & 0\\    
    0 & 0 & 0 & 0 & 0\\    
    \rho_0\left(\dfrac{\Delta_b}{n_b}+\dfrac{p_4}{\rho_0}\right) & 0 & 0 & 0 & 0\\
0 & 0 & 0 & 0 & 0\\    
  \end{bmatrix}.
\end{equation}

To calculate the spectral radius of the matrix $\nabla_{\xi^0} H(\mathcal P)$, 
set 
\begin{equation}\label{eq:form of a c}
a=\dfrac{1}{\rho_0+\dfrac{d_0}{\Delta_b}\left(\dfrac{\Delta_b}{n_b}+\dfrac{p_4}{\rho_0}\right)},\qquad 
c=\rho_0\left(\dfrac{\Delta_b}{n_b}+\dfrac{p_4}{\rho_0}\right).
\end{equation}
The eigenvalues of $\nabla_{\xi^0} H(\mathcal P)$ are $0$ (with multiplicity $3$), and $\pm\sqrt{ \dfrac{ac}{k_0\Delta_r}}$.
Notice that from \eqref{eq:P}, and \eqref{eq:trick}
\begin{equation}\label{eq:trick 2}
\dfrac{\Delta_r}{n_r}+\dfrac{p_4}{\rho_0}=\dfrac{\Delta_r-\Delta_b-\sqrt{(\Delta_r-\Delta_b)^2-4k_0\Delta_r\Delta_b}}{2}=\dfrac{2k_0\Delta_r\Delta_b}{\Delta_r-\Delta_b+\sqrt{(\Delta_r-\Delta_b)^2-4k_0\Delta_r\Delta_b}}>0,
\end{equation}
then by \eqref{eq:p1} and \eqref{eq:second form p1}
\begin{equation}\label{eq:trick 3}
\dfrac{ac}{k_0\Delta_r}=\dfrac{\dfrac{\rho_0}{k_0\Delta_r}\left(\dfrac{\Delta_b}{n_b}+\dfrac{p_4}{\rho_0}\right)}{\rho_0+\dfrac{d_0}{\Delta_b}\left(\dfrac{\Delta_b}{n_b}+\dfrac{p_4}{\rho_0}\right)}=\dfrac{\dfrac{\rho_0}{k_0\Delta_r}\left(\dfrac{\Delta_r}{n_r}+\dfrac{p_4}{\rho_0}\right)}{\rho_0+\dfrac{d_0}{\Delta_r}\left(\dfrac{\Delta_r}{n_r}+\dfrac{p_4}{\rho_0}\right)}>0.
\end{equation}
Therefore all the eigenvalues of $\nabla_{\xi^0} H(\mathcal P)$ are real and the spectral radius of $\nabla_{\xi^0} H(\mathcal P)$ is 
$R_{\xi^0}=\sqrt{\dfrac{ac}{k_0\Delta_r}}.$ We estimate $R_{\xi^0}$.

From \eqref{eq:formula for delta}, \eqref{eq:trick 2}, and \eqref{eq:trick 3}
\begin{equation}\label{eq:ac}
\dfrac{ac}{k_0\Delta_r}
=
\dfrac{\dfrac{2\Delta_b }{\Delta_r-\Delta_b+\sqrt{\delta}}}{1+\dfrac{2\Delta_b}{\Delta_r-\Delta_b+\sqrt{\delta}}}
=
\dfrac{2\,\Delta_b}{\Delta_r+\Delta_b+\sqrt{\delta}}
\leq
\dfrac{2\,\Delta_b}{\Delta_r+\Delta_b}:=\delta_0<1.
\end{equation}
Since $\sqrt{\delta}<\Delta_r-\Delta_b$, we conclude that
\begin{equation}\label{eq:Rxi0 bound}
\sqrt{\dfrac{\Delta_b}{\Delta_r}}< R_{\xi^0}=\sqrt{\dfrac{2\,\Delta_b}{\Delta_r+\Delta_b+\sqrt{\delta}}}\leq\sqrt{\dfrac{2\Delta_b}{\Delta_b+\Delta_r}}=\sqrt{\delta_0}<1.
\end{equation}


\textbf{Calculation of $\nabla_{\xi^1}H(\mathcal P)=\left(\dfrac{\partial h_i}{\partial \xi^1_j}(\mathcal P)\right)_{1\leq i,j\leq 5}$.} Notice from \eqref{eq:formula for h2}, \eqref{eq:formula for h3} and \eqref{eq:h5formula} that $h_2$, $h_3$ and $h_5$ do not depend on $\xi^1$ then 
$$\nabla_{\xi^1}h_2(\mathcal P)=\nabla_{\xi^1}h_3(\mathcal P)=\nabla_{\xi^1}h_5(\mathcal P)={\bf 0}.$$

We calculate $\nabla_{\xi^1}h_1(\mathcal P)$. From \eqref{eq:formula for h1}, and the fact that $p_1=h_1(\mathcal P)=\dfrac{\widetilde{F}_{1r}(0,{\bf 0}, P)}{\widetilde{F}_{1b}(0,{\bf 0}, P)}$ we get
\begin{align*}
\nabla_{\xi^1}h_1(\mathcal P)&=-\dfrac{\widetilde{F}_{1r}(0,{\bf 0}, P)}{\widetilde{F}_{1b}(0,{\bf 0}, P)^2}\nabla_{\xi^1} \widetilde {F}_{1b}(0,{\bf 0},P)=-\dfrac{p_1}{\widetilde F_{1b}(0,{\bf 0},P)}\nabla_{\xi^1} \widetilde {F}_{1b}(0,{\bf 0},P).
\end{align*}
Recall from \eqref{eq:values at 0}
$$\widetilde{F}_{1b}(0,{\bf 0},P)=\rho_0+\dfrac{d_0}{\Delta_b}\left(\dfrac{\Delta_b}{n_b}+\dfrac{p_4}{\rho_0}\right).$$
Similarly as in \eqref{eq:grad tilde F1r}
$$\nabla_{\xi^1}\widetilde{F}_{1b}(\mathcal P)=\left(0,0,1,\dfrac{1}{k_0\Delta_b},0\right).$$
Hence
\[
\nabla_{\xi^1}h_1(\mathcal P)=\dfrac{-p_1}{\rho_0+\dfrac{d_0}{\Delta_b}\left(\dfrac{\Delta_b}{n_b}+\dfrac{p_4}{\rho_0}\right)}\left(0,0,1,\dfrac{1}{k_0\Delta_b},0\right).
\]
We next calculate $\nabla_{\xi^1}h_4(\mathcal P)$.
Recall from \eqref{eq:values at 0}
$$\mathcal A_b({\bf 0})=-\dfrac{n_b}{\Delta_b\rho_0},\quad
\Lambda_{2,b}(0,{\bf 0})=\dfrac{1}{\Delta_b},\quad
\Lambda_{2,r}(0,{\bf 0})=\dfrac{1}{\Delta_r},\quad
\mu_r(0,{\bf 0})=\mu_b(0,{\bf 0})=0,
$$
and as in \eqref{eq:derivative of tilde mur} $\nabla_{\xi^1}\widetilde{\mu}_b(0,{\bf 0},P)=\dfrac{1}{\Delta_b \rho_0}\left(0,0,0,1,0\right)$,
then from \eqref{eq:formula for h4}
\[
\nabla_{\xi^1} h_4(\mathcal P)=\dfrac{p_1\dfrac{1}{\Delta_r}}{\dfrac{1}{\Delta_r\rho_0}\dfrac{1}{\Delta_b}}\dfrac{1}{\Delta_b\rho_0}(0,0,0,1,0)=p_1(0,0,0,1,0)
\]
We conclude that
\begin{equation}\label{eq:Total derivative xi1}
\nabla_{\xi^1} H(\mathcal P)=
\begin{bmatrix}
    0 & 0 & \dfrac{-p_1}{\rho_0+\dfrac{d_0}{\Delta_b}\left(\dfrac{\Delta_b}{n_b}+\dfrac{p_4}{\rho_0}\right)}& \dfrac{\dfrac{-p_1}{k_0\Delta_b}}{\rho_0+\dfrac{d_0}{\Delta_b}\left(\dfrac{\Delta_b}{n_b}+\dfrac{p_4}{\rho_0}\right)}& 0  \\
    0 & 0 & 0 & 0 & 0\\    
    0 & 0 & 0 & 0 & 0\\    
    0 & 0 & 0 & p_1 & 0\\
0 & 0 & 0 & 0 & 0\\    
  \end{bmatrix}.
\end{equation}

Notice that $\nabla_{\xi^1} H(\mathcal P)$ is an upper triangular matrix with eigenvalues $0$ (with multiplicity $4$) and $p_1$, the spectral radius of $\nabla_{\xi^1} H(\mathcal P)$ is
\begin{equation}\label{eq:spect of grad xi1}
R_{\xi^1}=|p_1|.
\end{equation}

%


\textbf{Choice of the norm.}
We are now ready to construct a norm for which $H$ is a contraction in the last two variables.
In fact, 
we will construct a norm denoted by $\|\cdot\|_{k_0}$ in $\R^5$ depending on $k_0$ such that for small $k_0$ 
\begin{equation}\label{eq:sum of norms less than one}
\left|\left\|\nabla_{\xi^0}H(\mathcal P)\right\|\right|_{k_0}+\left|\left\|\nabla_{\xi^1}H(\mathcal P)\right\|\right|_{k_0}<1
\end{equation}
where $\left|\left\|\cdot\right\|\right|_{k_0}$ is the matrix norm in $R^{5\times 5}$ induced by $\|\cdot\|_{k_0}$. Recall that $k_0=\rho_0/d_0$.

We will show that for each $0<k_0\leq \dfrac{\(\Delta_r-\Delta_b\)^2}{4\Delta_r\Delta_b}$, there exist $\lambda_1,\cdots ,\lambda_5$ positive depending on $k_0$ such that the norm in $\R^5$ having the form 
$$\|x\|_{k_0}=\max\left(\lambda_1|x_1|,\lambda_2|x_2|,\lambda_3|x_3|,\lambda_4|x_4|,\lambda_5|x_5|\right),$$
satisfies
\[
\left|\left\|\nabla_{\xi^0}H(\mathcal P)\right\|\right|_{k_0}<1.
\]

We first choose $\lambda_1=\lambda_2=\lambda_5=1$.
Assume $x\in \R^5$ with $\|x\|_{k_0}=1$, which implies $|x_i|\leq \dfrac{1}{\lambda_i}$. Then from \eqref{eq:Total derivative xi0}, and \eqref{eq:ac}
\begin{align*}
\left\|\nabla_{\xi^0} H(\mathcal P)x\right\|_{k_0}&=\max\left(\left|a\,x_3+\dfrac{a}{k_0\Delta_r}\,x_4\right|,\lambda_4 \,|c\, x_1|\right)\\
&\leq \max\left(\dfrac{1}{\lambda_3}\,|a|+\dfrac{1}{\lambda_4}\dfrac{|a|}{k_0\Delta_r},\lambda_4\,|c|\right)\leq 
\max\left(\dfrac{|a|}{\lambda_3}+\dfrac{\delta_0}{\lambda_4\,|c|},\lambda_4\,|c|\right)
\end{align*}
with $a$ and $c$ defined in \eqref{eq:form of a c}. Hence
\[
\left|\left\|\nabla_{\xi^0}H(\mathcal P)\right\|\right|_{k_0}=\max_{\|x\|_{k_0}= 1}\left\|\nabla_{\xi^0} H(\mathcal P)x\right\|_{k_0}
\leq 
\max\left\{\dfrac{|a|}{\lambda_3}+\dfrac{\delta_0}{\lambda_4\,|c|},\lambda_4\,|c|\right\}.
\]
We will choose $\lambda_3$ and $\lambda_4$ so that the last maximum is less than one.
Let $\delta_0<\delta_1<\delta_2<1$, with $\delta_0$ defined in \eqref{eq:ac}, $\lambda_4=\delta_2/|c|$ and $\lambda_3=N\,\lambda_4$, with $N$ to be determined depending only on $n_r$ and $n_b$.
Then 
\[
\max\left\{\dfrac{|a|}{\lambda_3}+\dfrac{\delta_0}{\lambda_4\,|c|},\lambda_4\,|c|\right\}=\max \left\{\dfrac{1}{\lambda_4\,|c|}\left(\dfrac{a\,c}{N}+\delta_0 \right),\lambda_4\,|c| \right\},
\]
notice that $ac>0$ from \eqref{eq:trick 3}.
From \eqref{eq:ac}, $a\,c\leq k_0\,\Delta_r\leq \Delta_r\,\dfrac{\(\Delta_r-\Delta_b\)^2}{4\Delta_r\Delta_b}:=B(r,b)$, we obtain
\[
\max\left\{\dfrac{|a|}{\lambda_3}+\dfrac{\delta_0}{\lambda_4\,|c|},\lambda_4\,|c|\right\}\leq \max \left\{\dfrac{1}{\delta_2}\left(\dfrac{B(r,b)}{N}+\delta_0 \right),\delta_2 \right\}.
\]
Now pick $N$, large depending only on $n_r$ and $n_b$, so that
$\dfrac{B(r,b)}{N}+\delta_0<\delta_1$. Therefore,  
\[
\left|\left\|\nabla_{\xi^0}H(\mathcal P)\right\|\right|_{k_0}\leq \max\left\{\dfrac{\delta_1}{\delta_2},\delta_2 \right\}:=s_0<1,
\]
for all $0<k_0\leq \dfrac{\(\Delta_r-\Delta_b\)^2}{4\Delta_r\Delta_b}$.

%

It remains to show that with the above norm $\|\cdot \|_{k_0}$ we also have $\eqref{eq:sum of norms less than one}$. To do this we need to choose $k_0$ sufficiently small.
In fact, from \eqref{eq:Total derivative xi1} and for $\|x\|_{k_0}= 1$ we have (since $\lambda_3=N\,\lambda_4$, $\lambda_4=\delta_2/|c|$, and $ac<k_0\Delta_r\leq B(r,b)$)
\begin{align*}
\left|\left\|\nabla_{\xi^1} H(\mathcal P) x\right\|\right|_{k_0}&=|p_1|\,\max\left(\left| a\,x_3+\dfrac{1}{k_0\Delta_b}a\,x_4\right|,\lambda_4\,|x_4|\right)\\
&\leq |p_1|\,\max\left(\dfrac{|a|}{\lambda_3}+\dfrac{1}{\lambda_4}\dfrac{|a|}{k_0\Delta_b},1\right)\\
&=
|p_1|\,\max\left(\dfrac{1}{\delta_2}\(\dfrac{a\,c}{N}+\dfrac{a\,c}{k_0\,\Delta_b}\),1\right)\\
&\leq
|p_1|\,\max\left(\dfrac{1}{\delta_2}\(\dfrac{B(r,b)}{N}+\dfrac{\Delta_r}{\Delta_b}\),1\right):=|p_1|\,s_1,
\end{align*}
for all $0<k_0\leq \dfrac{\(\Delta_r-\Delta_b\)^2}{4\Delta_r\Delta_b}$ with $s_1$ depending only on $n_r$ and $n_b$ ($s_1>1$).
Hence 
\[
\left|\left\|\nabla_{\xi^1}H(\mathcal P)\right\|\right|_{k_0}
\leq
|p_1|\,s_1.
\]
Therefore  
$$
\left|\left\|\nabla_{\xi^0} H(\mathcal P) \right\|\right|_{k_0}+\left|\left\|\nabla_{\xi^1} H(\mathcal P) \right\|\right|_{k_0}
\leq s_0+ |p_1|\,s_1.$$
From \eqref{eq:formula for delta} and \eqref{eq:P},  $p_1\to 0$ as $k_0\to 0$, and therefore we obtain \eqref{eq:sum of norms less than one} for $k_0$ close to $0$.

\textbf{Verification of \eqref{eq:Contraction}}.
Let us now show that $H$ satisfies the Lipschitz condition \eqref{eq:Lip in xi0,xi1} with constants satisfying \eqref{eq:Contraction} in a sufficiently small neighborhood of $\mathcal P$ and with respect to the norm chosen.
In fact, for $k_0$ sufficiently small, from \eqref{eq:sum of norms less than one}, there is $0<c_0<1$ so that 
\[
\left|\left\|\nabla_{\xi^0}H(\mathcal P)\right\|\right|_{k_0}+\left|\left\|\nabla_{\xi^1}H(\mathcal P)\right\|\right|_{k_0}\leq c_0<1.
\]
Since $H$ is $C^1$, there exists 
a norm-neighborhood $N_{\varepsilon}(\mathcal P)$ of $\mathcal P$ as in Theorem \ref{thm: Existence}, so that
\begin{align}\label{eq:contraction condition}
&\max_{\(t;\zeta^0,\zeta^1;\xi^0,\xi^1\)\in N_{\varepsilon}(\mathcal P)}\left|\left\|\nabla_{\xi^0} H\(t;\zeta^0,\zeta^1;\xi^0,\xi^1\) \right\|\right|_{k_0}\\
&\qquad \qquad 
+\max_{\(\hat t;\hat \zeta^0,\hat \zeta^1;\hat \xi^0,\hat \xi^1\)\in N_{\varepsilon}(\mathcal P)}\left|\left\|\nabla_{\xi^1} H\(\hat t;\hat \zeta^0,\hat \zeta^1;\hat \xi^0,\hat \xi^1\) \right\|\right|_{k_0}\leq c_1,\notag
\end{align}
for some $c_0<c_1<1$. Then by Proposition \ref{eq:prop Bound on C0+C1}, the inequalities \eqref{eq:Lip in xi0,xi1} and \eqref{eq:Contraction} hold with $C_0=\max_{N_{\varepsilon}(\mathcal P)}\left|\left\|\nabla_{\xi^0} H\right\|\right|$ and $C_1=\max_{N_{\varepsilon}(\mathcal P)}\left|\left\|\nabla_{\xi^1} H\right\|\right|$.

\textbf{Verification of \eqref{eq:bd on h1}.}
If $k_0<\dfrac{(\Delta_r-\Delta_b)^2}{4\Delta_r\Delta_b}$, then from Theorem \ref{thm:Algebraic system} $|p_1|<1$, and so $|h_1(\mathcal P)|<1$. Hence by continuity there exists a neighborhood of  $\mathcal P$ so that $\left|h_1(t;\zeta^0,\zeta^1;\xi^0,\xi^1)\right|< 1$ for all $(t;\zeta^0,\zeta^1;\xi^0,\xi^1)$ in that neighborhood. Therefore, condition \eqref{eq:bd on h1} in Theorem \ref{thm: Existence} is satisfied.

We conclude that for small values of $k_0$ Theorem \ref{thm: Existence} is applicable and therefore there exists a unique local solution to the system \eqref{eq:Optic System} with $Z'(0)=P$ and $P $ given in \eqref{eq:P}. Notice also that for such a solution $Z$, we have $z_1(0)=0$, and $\left| z_1'(0)\right|=\left| h_1(\mathcal P)\right|=\left|p_1\right|<1$, then by reducing the neighborhood of zero, if necessary, we have that the solution satisfies $|z_1(t)|\leq |t|$. Since we also showed in Theorem \ref{thm:Algebraic system} that $p_1\neq 0$,
Theorem \ref{thm:Converse} is applicable and 
the proof of Theorem \ref{thm:Existence last} is then complete.
\end{proof}

Summarizing the question of solvability of Problem B in the plane in a neighborhood of zero, we have: 
\begin{enumerate}
\item If $k_0>\dfrac{(\Delta_r-\Delta_b)^2}{4\Delta_r\Delta_b}$ then Problem $B$ is not locally solvable.

\item Let $A=(0,\rho_0)$, $B=(0,\rho_0+d_0)$, and $0<k_0<C(r,b)<\dfrac{(\Delta_r-\Delta_b)^2}{4\Delta_r\Delta_b}$.
Then there exists $\delta>0$
and a unique lens $(L,S)$ with lower face $L=\{\rho(t)x(t)\}_{t\in [-\delta,\delta]}$, $x(t)=(\sin t,\cos t)$, and upper face $S=\{f_b(t)\}_{t\in [-\delta,\delta]}$, $f_b$ defined in Theorem \ref{thm:Converse}, such that $L$ passes through $A$, $S$ passes through $B$, and so that $(L,S)$ refracts all rays emitted from $O$ with colors r and b and direction $x(t)$, $t\in [-\delta,\delta]$ into the vertical direction $e=(0,1)$. 
\end{enumerate}

\begin{remark}\label{rmk:final remark}\rm
In this final remark, we point out that Theorem \ref{thm: Existence} is not applicable to find solutions to the system \eqref{eq:Optic System} when $k_0\leq \dfrac{(\Delta_r-\Delta_b)^2}{4\Delta_r\Delta_b}$ and $k_0$ is away from $0$. 
In this case we claim that there is no norm in $\R^5$ for which we can obtain \eqref{eq:Lip in xi0,xi1} with $C_0$ and $C_1$ satisfying \eqref{eq:Contraction}.
In fact, from \eqref{eq:formula for delta}, \eqref{eq:P}, \eqref{eq:Rxi0 bound}, and \eqref{eq:spect of grad xi1}
$$R_{\xi^0}+R_{\xi^1}=\sqrt{\dfrac{2\Delta_b}{\Delta_r+\Delta_b+\sqrt{\delta}}}+|p_1|\to \sqrt{\dfrac{2\Delta_b}{\Delta_r+\Delta_b}}+1>1,$$
as $k_0\to \dfrac{(\Delta_r-\Delta_b)^2}{4\Delta_r\Delta_b}$ from below. 
Hence for $k_0$ close to $\dfrac{(\Delta_r-\Delta_b)^2}{4\Delta_r\Delta_b}$, we have $R_{\xi^0}+R_{\xi^1}>1$, and hence by Corollary \ref{cor:spectral radii} the claim follows.
\end{remark}

\section*{Acknowledgements}{\small C. E. G. was partially supported by NSF grant DMS--1600578, and A. S. was partially supported by Research Grant 2015/19/P/ST1/02618 from the National Science Centre, Poland, entitled "Variational Problems in Optical Engineering and Free Material Design".}\\ 
\vspace{-.6cm}
\begin{wrapfigure}{l}{0.2\textwidth} 
\begin{center} \includegraphics[width=0.2\textwidth]{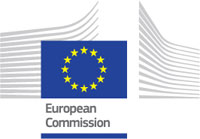} 
\end{center} 
\end{wrapfigure}\\
{\footnotesize This project has received funding from the European Union's Horizon 2020 research and innovation program under the Marie Sk\l{}odowska-Curie grant agreement No. 665778.}\\ \\


\end{document}